\theoremstyle{plain} 
\newtheorem{theorem}{Theorem}[section]
\newtheorem{lemma}{Lemma}[section]
\theoremstyle{definition}
\theoremstyle{remark}
\newtheorem{remark}{Remark}[section]
\DeclarePairedDelimiter{\abs}{\lvert}{\rvert} 
\DeclarePairedDelimiter{\norm}{\lVert}{\rVert}
\DeclareMathOperator{\supp}{supp}
\DeclareMathOperator*{\essupp}{ess\,sup}
\newcommand{\invf}[1]{\,\, \widecheck{#1}}
\newcommand{\invt}[1]{\,\, \widecheck{#1}\,\,^{\tau}}
\newcommand{\invx}[1]{\,\, \widecheck{#1}\,\,^{\xi}}
\newcommand{\invet}[1]{\,\, \widecheck{#1}\,\,^{\eta \tau}}
\newcommand{\dirf}[1]{\,\, \widehat{#1}}
\newcommand{\ft}[1]{\,\, \widehat{#1}\,\,^{\tau}}
\newcommand{\fxy}[1]{\,\, \widehat{#1}\,\,^{x y}}
\newcommand{\fyt}[1]{\,\, \widehat{#1}\,\,^{y t}}
\newcommand{\R}{\mathbb{R}}
\newcommand{\N}{\mathbb{N}}
\newcommand{\normeq}[1]{{\left\vert\kern-0.25ex\left\vert\kern-0.25ex\left\vert #1 
    \right\vert\kern-0.25ex\right\vert\kern-0.25ex\right\vert}}
\newcommand{\Lt}{{L^2(\R^2\times[0,1])}}
\newenvironment{system}%
{\left\lbrace\begin{array}{@{}l@{}}}%
{\end{array}\right.}
\title[Unique continuation for Z-K equation]{Uniqueness results for Zakharov-Kuznetsov equation}
\author{Lucrezia Cossetti}
\author{Luca Fanelli}
\author{Felipe Linares}
\address{Lucrezia Cossetti: BCAM - Basque Center for Applied Mathematics, Mazarredo, 14 E48009 Bilbao, Basque Country, Spain}
\address{Luca Fanelli: Dipartimento di Matematica, Sapienza Università di Roma, P. le A. Moro 5, 00185, Roma}
\address{Felipe Linares: IMPA, Instituto Matem\'atica Pura e Aplicada, Estrada Dona Castorina 110, 22460-320, Rio de Janeiro, RJ,
Brazil} 
\email{lcossetti@bcamath.org}
\email{fanelli@mat.uniroma1.it}
\email{linares@impa.br}
\subjclass[2010]{35Q35, 35Q53}
\keywords{Dispersive equations; Unique continuation property}
\begin{document}

\date{\today}


\begin{abstract}
	In this paper we study uniqueness properties of solutions to the Zakharov-Kuznetsov equation of plasma physic.
	
Given two sufficiently regular solutions $u_1, u_2,$ we prove that, if $u_1-u_2$ decays  fast enough at two distinct times, then $u_1\equiv u_2.$   
\end{abstract}

\maketitle


\section{Introduction}
This paper is concerned with uniqueness properties results for solutions of  the so called Zakharov-Kuznetsov equation
\begin{equation}\label{Z-K}
	\partial_t u + \partial_x^3 u + \partial_x\partial_y^2 u + u \partial_x u=0, \qquad (x,y)\in \R^2, \quad t\in [0,1].
\end{equation}
Equation~\eqref{Z-K} is one of the variants of the $(2+1)$-dimensional generalization of the Korteweg-de Vries (KdV) equation that reads
\begin{equation}\label{KdV}
	\partial_t u + \partial_x^3 u + u \partial_x u=0, \qquad x\in \R, \quad t\in [0,1]. 
\end{equation}
The equation was introduced in the context of plasma physic by Zakharov and Kuznetsov in~\cite{Z_K}, where they formally deduced that the propagation of nonlinear ion-acoustic waves in magnetized plasma is governed by this mathematical model. A rigorous derivation of equation~\eqref{Z-K} was given by Lannes, Linares and Saut in~\cite{L_L_S}.

\medskip
The problem of local and global well-posedness for the Cauchy problem associated to~\eqref{Z-K} has extensively been studied. Up to date the best local well-posedness result available in the literature was obtained independently by Molinet and Pilod~\cite{M_P} and Gr\"unrock and Herr~\cite{G_H} for initial data in $H^s(\R^2),$ $s>\frac{1}{2}.$
Then the global theory follows by standard arguments based on $L^2$ and $H^1$ conservation laws. We refer to~\cite{F, L_P, L_P_S, L_PII} and references therein for other results of this type and several additional remarks concerning with properties of this equation.

\medskip
Our main goal is to prove uniqueness properties from two distinct times for equation~\eqref{Z-K}. More precisely we want to deduce sufficient conditions on the behavior of the difference $u_1-u_2$ of two solutions $u_1,$ $u_2$ of~\eqref{Z-K} at two different times, $t_0=0$ and $t_1=1,$ which guarantee  that  $u_1\equiv u_2.$ This kind of results is inspired to the program performed in~\cite{E_K_P_V,E_K_P_V1,E_K_P_V2,E_K_P_V3,E_K_P_V4,E_K_P_V5} for Schr\"odinger and KdV (see also~\cite{S_S} and Remark~\ref{rk:linear_problem} below for further details). 

\medskip
The main motivation for our study is a recent work by Bustamante, Isaza and Mej\'ia~\cite{B_I_M} where an upper bound for the possible decay at two different times of a non-trivial difference of two solutions of~\eqref{Z-K} was given. 
More precisely they prove the following:
\begin{theorem}[\cite{B_I_M}~]\label{thm:B_I_M}
	Suppose that for some small $\varepsilon>0$
	\begin{equation*}
		u_1, u_2 \in C\big([0,1]; H^4(\R^2) \cap L^2((1+ x^2 + y^2)^{\frac{4}{3} + \varepsilon}\, dx dy)\big) \cap C^1 \big( [0,1];L^2(\R^2) \big),
	\end{equation*}
		are solutions of~\eqref{Z-K}. Then there exists a universal constant $a_0>0,$ such that if for some $a>a_0$
		\begin{equation*}
			u_1(0)-u_2(0), u_1(1)-u_2(1) \in L^2(e^{a(x^2 + y^2)^{3/4}}\, dx dy),
		\end{equation*}
		then $u_1\equiv u_2.$
\end{theorem}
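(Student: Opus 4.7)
The plan is to follow the Escauriaza--Kenig--Ponce--Vega program, adapted here to the anisotropic dispersive operator $\partial_x^3+\partial_x\partial_y^2=\partial_x\Delta$. First I would reduce to a linear problem by setting $w=u_1-u_2$. The identity
\begin{equation*}
u_1\partial_x u_1-u_2\partial_x u_2=w\,\partial_x u_1+u_2\,\partial_x w
\end{equation*}
shows that $w$ solves the linear variable-coefficient equation
\begin{equation*}
\partial_t w+\partial_x^3 w+\partial_x\partial_y^2 w+(\partial_x u_1)\,w+u_2\,\partial_x w=0,
\end{equation*}
with coefficients bounded (in the appropriate norms) by the hypotheses on $u_1,u_2$, and with $w(0),w(1)\in L^2(e^{a(x^2+y^2)^{3/4}}\,dx\,dy)$. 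The goal becomes: for some universal threshold $a_0$, the decay condition on $w$ at the two endpoints forces $w\equiv 0$.

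The strategy is the usual two-pronged one. First, an \emph{upper bound}, or persistence-of-decay result: the weighted $L^2$ norm of $w$ with respect to an intermediate Gaussian weight $e^{\lambda\,\varphi(x,y,t)}$, with $\varphi$ interpolating between $(x^2+y^2)^{3/4}$ at $t=0,1$ and a flatter profile at $t=1/2$, remains finite on $[0,1]$. This is proved by multiplying the equation by $e^{2\lambda\varphi}w$, integrating by parts, and absorbing the contributions from the nonlinear coefficients using the regularity of $u_1,u_2$ and the polynomial decay hypothesis. The exponent $3/4$ in $(x^2+y^2)^{3/4}=r^{3/2}$ is the critical one that matches the third-order scaling of the dispersive part (in the same way $|x|^{3/2}$ is critical for the Airy/KdV equation).

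Second, a \emph{lower bound}, or Hardy-type uncertainty principle for $w$: assuming $w\not\equiv 0$, one would derive a quantitative positive lower bound, depending exponentially on a large parameter, on the $L^2$ norm of $w$ on some cylinder $B_R\times[t_0-\delta,t_0+\delta]$. This is the heart of the argument and rests on a Carleman inequality for the ZK operator. Concretely, one seeks weights of the form $e^{\lambda\Phi(x,y,t)}$ with $\Phi\sim (x^2+y^2)^{3/4}$ such that the conjugated operator
\begin{equation*}
e^{\lambda\Phi}\bigl(\partial_t+\partial_x^3+\partial_x\partial_y^2\bigr)e^{-\lambda\Phi}=\mathcal{S}_\lambda+\mathcal{A}_\lambda,
\end{equation*}
decomposed into symmetric and antisymmetric parts, yields a positive commutator $[\mathcal{S}_\lambda,\mathcal{A}_\lambda]$ after integration, producing the inequality
\begin{equation*}
\lambda^{\alpha}\|e^{\lambda\Phi}w\|_{L^2}^2\;\lesssim\;\|e^{\lambda\Phi}(\partial_t+\partial_x^3+\partial_x\partial_y^2)w\|_{L^2}^2
\end{equation*}
for some power $\alpha>0$. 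Combining this with truncation in a ball $B_R$ and with the upper-bound step, one plays the two inequalities against each other: the upper bound says the weighted mass is uniformly controlled, while the lower bound says it must grow like $e^{c\lambda}$ unless $w$ vanishes on a large region. Sending $\lambda\to\infty$ yields $w\equiv 0$ on that cylinder, and a standard propagation of zeros (using uniqueness for the linear evolution) extends this to all of $\R^2\times[0,1]$.

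The main obstacle I expect is the construction of the Carleman weight. The radial profile $r^{3/2}$ is natural from the scaling and from the endpoint data, but it does not respect the anisotropy of $\partial_x\Delta$: the weight's gradient and Hessian couple the $x$-direction (where the two spatial derivatives act cubically) to the transverse $y$-direction (where only a second-order mixed term appears). One therefore needs to add lower-order time-dependent corrections to $\Phi$ to kill sign-indefinite cross terms in the commutator, analogously to the $(x+\beta(t))^{3/2}_+$ shifts used for KdV, but now in two dimensions. Tracking the constants carefully in this construction is what determines the universal threshold $a_0>0$.
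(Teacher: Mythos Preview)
This theorem is not proved in the present paper; it is quoted from Bustamante--Isaza--Mej\'ia \cite{B_I_M} as motivation, and the paper's own contribution is the sharper Theorem~\ref{main_result}, where the radial weight $e^{a(x^2+y^2)^{3/4}}$ is replaced by the one-directional weight $e^{a|x|^{3/2}}$. So there is no proof here to compare against directly.

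That said, your sketch correctly identifies the EKPV architecture (linearize via $w=u_1-u_2$; upper bound by persistence of weighted decay; lower bound by Carleman; contradiction), and this is exactly the skeleton both \cite{B_I_M} and the present paper follow. One structural difference worth noting: the paper does \emph{not} build a Carleman weight for the anisotropic operator $\partial_x\Delta$ with the radial profile $r^{3/2}$ as you propose. Instead it first applies the Gr\"unrock--Herr linear change of variables to transform $\partial_x^3+\partial_x\partial_y^2$ into the symmetric operator $\partial_x^3+\partial_y^3$, and then runs the Carleman/commutator computation with the weight $\phi(x,y,t)=|\varkappa/R+\varphi(t)\xi|^2$, $\varkappa=(x,y)$, $\xi=(1,1)$, which after unwinding corresponds to decay in $|x+y|$ (hence in the original $|x|$). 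The anisotropy you flag as the ``main obstacle'' is thus removed by a coordinate change rather than by adding corrections to a radial $\Phi$. Your approach is closer in spirit to what \cite{B_I_M} actually does; the paper's symmetrization is what allows the sharper, non-radial conclusion.
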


\begin{remark}\label{rk:linear_problem}
As the authors  in \cite{B_I_M} pointed out, this result does not appear to be optimal, indeed the symmetric character in $x$ and $y$ of the decay assumption does not reflect the non symmetric form, with respect to $x$ and $y,$ of equation~\eqref{Z-K}. 

To explain this fact, let us sketch the analog picture for KdV and Schr\"odinger equations. As regards with the KdV equation, Escauriaza, Kenig, Ponce and Vega in~\cite{E_K_P_V2}, considering $u_1$ and $u_2$ two solutions of
\begin{equation}\label{eq:KdV}
	\partial_t u + \partial_x^3 u + u \partial_x u=0, \qquad (x,t)\in \R \times [0,1],
\end{equation} 
deduced  that  there exists a universal constant $a_0>0$ such that if for some $a>a_0$
\begin{equation}\label{KdV_decay}
u_1(0)-u_2(0),\quad u_1(1)-u_2(1)\in L^2(e^{a x_+^{3/2}}\, dx),
\end{equation}
then $u_1\equiv u_2.$ (Here $x_+:=\max\{x;0\}$).

Instead in~\cite{E_K_P_V5} the same authors considered solutions of the Schr\"odinger equation
\begin{equation}\label{eq:Schroedinger}
	\partial_t u= i(\Delta u + V(x,t) u), \qquad (x,t)\in \R^n \times [0,1], 
\end{equation}
and proved that if $u$ is a solution of this equation and if there are two positive constants $\alpha$ and $\beta$ with $\alpha \beta <4$ such that
\begin{equation}\label{Schroedinger_decay}
	\norm{e^{\abs{x}^2/\beta^2} u(0)}_{L^2(\R^n)},\quad \norm{e^{\abs{x}^2/\alpha^2} u(1)}_{L^2(\R^n)}<\infty, 
\end{equation}
then $u\equiv 0.$

The  value $3/2$ in the exponent in~\eqref{KdV_decay} arises in the asymptotic behavior of the Airy function, while the Gaussian decay is known to be the sharpest possible simultaneous decay for both a function $f$ and its Fourier transform $\widehat{f},$ which explains~\eqref{Schroedinger_decay} together with the aid of the explicit formula for the Schr\"odinger kernel. 

	\end{remark}
	
	\medskip
	
	For the ZK equation one might expect to have a sharp decay of the form  $e^{-a x^{3/2} -b y^2}$. This is because of  
	the decay of the fundamental solution of KdV and the Gaussian parabolic heritage arising from the Fourier uncertainty.

\medskip
Recently, Faminskii and Antonova in~\cite{F_A} showed that the previous ``natural'' ansatz for the decay assumption is wrong, they proved that the fundamental solution to the operator $\partial_t + \partial_x^3 + \partial_x\partial_y^2$ still displays an exponential decay but just in the $x$ variable. More precisely, considering the IVP
\begin{equation*}
	\begin{system}
		\partial_t u + \partial_x^3 u + \partial_x \partial_y^2 u=0\\
		u(x,y,0)=u_0(x,y)
	\end{system}
\end{equation*}    
whose solution given as a convolution by
\begin{equation*}
	u(x,y,t)= \frac{\theta(t)}{t^\frac{2}{3}} S\Big( \frac{x}{t^\frac{1}{3}},\frac{y}{t^\frac{1}{3}} \Big) \ast u_0(x,y), 
\end{equation*}
where 
\begin{equation}\label{def:S}
	S(x,y):= \frac{1}{2\pi} \mathcal{F}^{-1}\big[(\xi,\eta)\mapsto e^{i(\xi^3 + \xi \eta^2)}\big]= \frac{1}{4\pi^2} \int_{\R^2} e^{i\xi x + i \eta y} e^{i(\xi^3 +  \xi \eta^2)}\, d\xi d\eta,
\end{equation}
$\theta$ is the Heaviside function and $\mathcal{F}^{-1}$ represents the inverse Fourier transform, they prove for the function $S$ the following result.

\begin{lemma}
	Let $S(x,y)$ be as in~\eqref{def:S}, for any $x\in \R$ and integer $k\geq 0$ the derivative $\partial_x^k S(x,y)$ belongs to the Schwartz space $\mathcal{S(\R)}$ with respect to $y$ and there exists a constant $c_0>0$ such that for any $x_0\in \R,$ integer $m\geq 0$ and multi-index $\nu$
	\begin{equation}\label{right_decay}
		(1 + \abs{y})^m \abs{\partial_{x,y}^\nu\, S(x,y)}\leq c(m, \abs{\nu}, x_0) e^{-c_0(x-x_0)^{3/2}}\quad \forall\, x\geq x_0, \, \forall\, y \in \R.
	\end{equation}
\end{lemma}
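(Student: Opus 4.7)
The strategy is to perform the $\xi$-integration in~\eqref{def:S} first, reducing $S(x,y)$ to a one-dimensional integral against a rescaled Airy function, and then to exploit the classical exponential decay of Airy. Since the $\xi$-phase in~\eqref{def:S} equals $\xi^3+\xi(x+\eta^2)$, we introduce
\[
A(s) := \int_\R e^{i(\xi^3 + s\xi)}\, d\xi = 2\pi\, 3^{-1/3}\, \mathrm{Ai}\bigl(3^{-1/3} s\bigr),
\]
and, after a standard regularization to handle the conditionally convergent oscillatory integral and to apply Fubini, we rewrite
\[
S(x,y) = \frac{1}{4\pi^2} \int_\R A(x+\eta^2)\, e^{i\eta y}\, d\eta.
\]
Differentiation under the integral then gives $\partial_x^{\nu_1}\partial_y^{\nu_2} S(x,y) = \frac{1}{4\pi^2} \int_\R A^{(\nu_1)}(x+\eta^2)\, (i\eta)^{\nu_2}\, e^{i\eta y}\, d\eta$.

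The first ingredient is the classical exponential bound for Airy and its derivatives: for every integer $j\ge 0$ there exist constants $C_j,\, c_0>0$ with
\[
\abs{A^{(j)}(s)} \leq C_j\, e^{-c_0 s^{3/2}} \qquad\text{for all } s\ge 1,
\]
together with the obvious boundedness on any half-line $[s_0, 1]$. This is proved by deforming the $\xi$-contour through the saddle point $\xi = i\sqrt{s/3}$, the conditional convergence being tamed by a smooth cut-off and a Jordan-type argument; the factors $\xi^j$ coming from differentiation contribute a harmless polynomial prefactor in $s$ that can be absorbed after shrinking the rate slightly. The second ingredient is the elementary inequality $(a+b)^{3/2}\ge a^{3/2}+b^{3/2}$ for $a,b\ge 0$, which, applied with $a=x$ and $b=\eta^2$, yields for $x\ge 0$ and $x+\eta^2\ge 1$
\[
\abs{A^{(j)}(x+\eta^2)} \leq C_j\, e^{-c_0 x^{3/2}}\, e^{-c_0 |\eta|^3}.
\]

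To bring in the weight $(1+|y|)^m$ we use the identity $y^{2m} e^{i\eta y} = (-\partial_\eta^2)^m e^{i\eta y}$ and integrate by parts $2m$ times in $\eta$; the rapid decay of $A^{(j)}(x+\eta^2)$ as $|\eta|\to\infty$ kills the boundary terms, and Leibniz's rule produces a finite sum of terms of the form $A^{(\nu_1+\ell)}(x+\eta^2)\, P_\ell(\eta)$, with $P_\ell$ a polynomial of degree at most $\nu_2+2m$. Splitting the $\eta$-integral at $|\eta|=R(x_0):=\sqrt{1+|x_0|}$, so that $x+\eta^2<1$ only on a compact set and $x+\eta^2\ge 1$ outside, one obtains an $\eta$-integrable bound with common prefactor $e^{-c_0 x^{3/2}}$. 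The bounded range $x\in[x_0,\max(x_0,1)]$ is absorbed into the constant $c(m,|\nu|,x_0)$, and for $x\ge 1$ the trivial inequality $x^{3/2}\ge (x-x_0)^{3/2}$ converts the prefactor into the stated $e^{-c_0(x-x_0)^{3/2}}$, simultaneously producing the Schwartz decay in $y$ of each $\partial_x^k S(x,\cdot)$. The main technical obstacle is the contour-shift proof of the first ingredient (together with the Fubini regularization supporting the Airy reduction); both are classical, but have to be set up carefully to obtain a rate $c_0$ that is independent of $j$.
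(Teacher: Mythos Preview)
The paper does not supply its own proof of this lemma; it is quoted from Faminskii--Antonova~\cite{F_A} as motivation for the decay hypothesis~\eqref{decay_properties}. So there is no ``paper's proof'' to compare against, and your argument stands on its own. The reduction to the Airy function via the $\xi$-integration, the use of the classical bound $|\mathrm{Ai}^{(j)}(s)|\le C_j\,e^{-\frac{2}{3}s^{3/2}}$ for $s\ge 0$ (with the polynomial prefactors from the ODE $\mathrm{Ai}''=s\,\mathrm{Ai}$ absorbed by shrinking the rate), the superadditivity $(a+b)^{3/2}\ge a^{3/2}+b^{3/2}$, and the integration by parts in $\eta$ to generate the $(1+|y|)^m$ weight are all correct and constitute the standard route to this estimate.

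There is one small gap. Your final conversion ``for $x\ge 1$ the trivial inequality $x^{3/2}\ge (x-x_0)^{3/2}$'' is only valid when $x_0\ge 0$; for $x_0<0$ one has $x-x_0>x$ and the inequality is reversed. The fix is to take the universal constant $c_0$ in~\eqref{right_decay} to be, say, half of the Airy rate $c_1$ you obtained: then for any fixed $x_0\in\R$ the function
\[
x\mapsto c_0(x-x_0)_+^{3/2}-c_1\,x_+^{3/2}
\]
is bounded above on $[x_0,\infty)$ (it tends to $-\infty$ as $x\to+\infty$ since $c_0<c_1$, and is continuous), and that supremum is absorbed into $c(m,|\nu|,x_0)$. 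With this adjustment your proof is complete.
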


This lemma suggests what should be the sharp decay for solutions to the nonlinear problem. Our main result in this work shows that is in fact the case. More precisely we prove the following:

\begin{theorem}\label{main_result}
	Suppose that for some small $\varepsilon>0,$ 
	\begin{equation}\label{hypotheses}
		u_1, u_2 \in C\big( [0,1]; H^4(\R^2) \cap L^2((1+\abs{x})^{2(\frac{4}{3} + \varepsilon)} \, dx dy) \big) \cap C^1( [0,1]; L^2(\R^2)),
	\end{equation}	
		are solutions of the equation~\eqref{Z-K}.
		
		Then there exists a universal constant $a_0,$ such that if for some $a>a_0$
		\begin{equation}\label{decay_properties}
			u_1(0)-u_2(0), u_1(1)-u_2(1) \in L^2\big( e^{a \abs{x}^{3/2}} dx dy\big), 
		\end{equation}
		then $u_1 \equiv u_2.$
\end{theorem}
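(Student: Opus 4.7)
The plan is to follow the two-step strategy developed by Escauriaza, Kenig, Ponce and Vega for KdV and Schr\"odinger \cite{E_K_P_V2,E_K_P_V5}, suitably adapted to the anisotropic Zakharov-Kuznetsov operator.  First I pass to the linear problem by setting $w := u_1 - u_2$; writing the nonlinear term as $u_2\, \partial_x w + (\partial_x u_1)\, w$, the difference satisfies
\begin{equation*}
\partial_t w + \partial_x^3 w + \partial_x \partial_y^2 w + u_2\, \partial_x w + (\partial_x u_1)\, w = 0,
\end{equation*}
with lower-order coefficients controlled by the hypothesis \eqref{hypotheses}.  The theorem reduces to showing that a non-trivial solution of this linear, variable-coefficient ZK-type equation cannot lie in $L^2(e^{a\abs{x}^{3/2}}\,dx\,dy)$ at both $t=0$ and $t=1$ when $a$ is large.

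Next comes the \emph{upper bound}: I would prove that the weighted quantity $\|e^{\lambda \phi(x,t)} w(\cdot,\cdot,t)\|_{L^2(\R^2)}$ remains bounded uniformly on $[0,1]$ whenever it is finite at $t = 0$ and $t=1$, with $\phi(x,t) \sim \theta(t)\langle x\rangle^{3/2}$ for a smooth positive time profile $\theta$.  This follows from weighted energy estimates applied to $f := e^{\lambda \phi}w$; because $\phi$ depends only on $(x,t)$, the $y$-integration is untouched, and the commutator $e^{\lambda\phi}(\partial_x^3 + \partial_x\partial_y^2)e^{-\lambda\phi}$ contributes only polynomial-in-$\lambda$ error terms, which are absorbed via the $(1+\abs{x})^{2(\frac{4}{3} + \varepsilon)}$ decay of $u_1, u_2$ in \eqref{hypotheses}.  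The heart of the matter is then the \emph{lower bound}, a Carleman-type inequality
\begin{equation*}
\alpha^{\sigma}\, \|e^{\alpha \psi(x,t)} g\|_{\Lt} \lesssim \|e^{\alpha \psi(x,t)} (\partial_t + \partial_x^3 + \partial_x\partial_y^2) g\|_{\Lt}
\end{equation*}
for $g \in C_c^\infty(\R^2\times(0,1))$, $\alpha$ large, and $\psi$ a pseudo-convex weight with linear $x$-growth.  A natural strategy is to take the Fourier transform in $y$, which converts $\partial_x^3 + \partial_x\partial_y^2$ into the family of one-dimensional operators $\partial_x^3 - \eta^2 \partial_x$ parametrised by $\eta \in \R$, and to prove uniform-in-$\eta$ Carleman bounds for these operators before re-integrating via Plancherel.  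With both bounds in hand one applies the Carleman estimate to a smooth cut-off $\chi\, w$, controls the right-hand side using the equation and the upper bound, and concludes, for $a$ large enough, that $w$ vanishes on a half-space $\{x > R\}$; a standard propagation argument then yields $w \equiv 0$.

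The principal obstacle is the Carleman estimate.  The mixed operator $\partial_x\partial_y^2$ conjugated with an $x$-only weight generates a term $-\alpha \psi_x\, \partial_y^2$ with no favourable sign, and the challenge is to choose $\psi$ so that the positivity arising from $[\partial_x^3, e^{\alpha\psi}] \sim \alpha^3 (\psi_x)^3$, together with the temporal convexity $\psi_{tt}$, dominates this loss.  The $3/2$-power appearing in \eqref{decay_properties} is precisely what makes the balance close, consistent with the one-sided exponential decay of the fundamental solution $S(x,y)$ recorded in \eqref{right_decay}.
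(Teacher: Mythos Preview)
Your overall architecture---pass to the linear difference equation, prove an upper bound by weighted energy/persistence and a lower bound by a Carleman inequality, then contradict for large $a$---is exactly the scheme the paper follows.  The substantive divergence is in how the Carleman estimate is obtained, and there the proposal has a real gap.

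The paper does \emph{not} attack $\partial_t + \partial_x^3 + \partial_x\partial_y^2$ head-on.  It first performs the linear change of variables~\eqref{change_variable}, which turns the ZK operator into the \emph{symmetrised} one $\partial_t + \partial_x^3 + \partial_y^3$ and converts the decay hypothesis $e^{a|x|^{3/2}}$ into $e^{\tilde a|x+y|^{3/2}}$ in the new coordinates.  For the decoupled operator one can use the Carleman weight
\[
\phi(x,y,t)=\Big(\tfrac{x}{R}+\varphi(t)\Big)^2+\Big(\tfrac{y}{R}+\varphi(t)\Big)^2,
\]
and the commutator computation in Lemma~\ref{Carleman1} is essentially the one–dimensional KdV calculation done twice in parallel, with no mixed terms.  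The problematic $\partial_x\partial_y^2$ simply never appears; Theorem~\ref{main_result} is then recovered from Theorem~\ref{main_result_s} by undoing the change of variables.

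Your route keeps the asymmetric operator and proposes an $x$-only weight, reducing via Fourier in $y$ to the family $\partial_t+\partial_x^3-\eta^2\partial_x$.  You identify the obstacle correctly: conjugation produces $\alpha\eta^{2}\psi_x$ in the symmetric part, which has no sign on the support of a two-sided weight, and for $|\eta|$ large the transport piece $-\eta^2\partial_x$ dominates the dispersive $\partial_x^3$, so the KdV positivity mechanism $\sim\alpha^{3}\psi_x^{3}$ is overwhelmed.  A uniform-in-$\eta$ Carleman estimate with a single $x$-only $\psi$ is therefore not routine, and nothing in the sketch explains how to close it; moreover, once the variable coefficients $u_2,\partial_x u_1$ depend on $y$, the Fourier modes couple and the reduction to a family of one-dimensional problems is lost.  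The symmetrisation in~\eqref{change_variable} is precisely the device that removes this difficulty, at the cost only of tracking how the weight and the function spaces transform; that is the missing idea in your proposal.
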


In order to obtain this result, following~\cite{G_H} we introduce the linear change of variables 
\begin{equation}\label{change_variable}
	\begin{system}
		x=\frac{x'+y'}{2\mu} \vspace{0.2cm} \\
		y=\frac{x'-y'}{2\lambda}
	\end{system}
\end{equation}
with $\lambda= \sqrt{3} \mu$ and $\mu=4^{-1/3}.$

It turns out that if $u=u(x,y,t)$ solves~\eqref{Z-K} then $\widetilde{u}(x',y',t):=u\big(\frac{x'+y'}{2\mu},\frac{x'-y'}{2\lambda},t\big)$ solves
\begin{equation}\label{Z-K_s}
	\partial_t u + (\partial_{x}^3 + \partial_{y}^3) u + 4^{-1/3} u \,(\partial_x + \partial_y)u=0, \qquad (x,y)\in \R^2, \quad t\in [0,1],
\end{equation}
that is a symmetric version of Z-K equation~\eqref{Z-K}. Here with abuse of notation we have called $x',y',\widetilde{u}$ as $x,y,u$ respectively.

\medskip

Therefore Theorem~\ref{main_result} is a consequence of the following result.
   
\begin{theorem}\label{main_result_s}
Suppose that for some $\varepsilon>0,$ 
	\begin{equation}\label{hypotheses_s}
		u_1, u_2 \in C\big( [0,1]; H^4(\R^2) \cap L^2((1+\abs{x + y})^{2(\frac{4}{3} + \varepsilon)} \, dx dy) \big) \cap C^1( [0,1]; L^2(\R^2)),
	\end{equation}	
		are solutions of the equation~\eqref{Z-K_s}.
		
		Then there exists a universal constant $a_0,$ such that if for some $a>a_0$
		\begin{equation}\label{decay_properties_s}
			u_1(0)-u_2(0), u_1(1)-u_2(1) \in L^2\big( e^{a \abs{x+y}^{3/2}} dx dy\big), 
		\end{equation}
		then $u_1 \equiv u_2.$
\end{theorem}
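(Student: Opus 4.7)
The plan is to adapt to the symmetric Z--K setting the Escauriaza--Kenig--Ponce--Vega uniqueness-from-two-times scheme originally developed for KdV in \cite{E_K_P_V2}. Setting $w:=u_1-u_2$ and subtracting the two instances of \eqref{Z-K_s}, one sees that $w$ satisfies the linear equation
\begin{equation*}
\partial_t w + (\partial_x^3+\partial_y^3)\,w + 4^{-1/3}\bigl[u_1(\partial_x+\partial_y)w + w\,(\partial_x+\partial_y)u_2\bigr]=0,
\end{equation*}
with coefficients controlled in $L^\infty_{t,x,y}$ via $H^4(\R^2)\hookrightarrow L^\infty$, and whose weighted norms are estimated thanks to the polynomial-weight assumption in \eqref{hypotheses_s}. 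The goal is to upgrade the endpoint decay \eqref{decay_properties_s} into a Carleman contradiction forcing $w\equiv 0$. The rotation $(s,r):=(x+y,x-y)$ turns $\partial_x^3+\partial_y^3$ into $2\partial_s^3+6\,\partial_s\partial_r^2$, of Z--K form with $s$ playing the dispersive role, so the weights will be designed to detect decay only in the $s$ direction.

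First I would prove a \emph{persistence-of-decay} estimate: starting from a weighted $L^2$-energy identity for $v:=e^{\varphi(s,t)}w$ with $\varphi$ a smooth, truncated, convex approximation of $a|s|^{3/2}$, integration by parts together with a logarithmic-convexity argument in $t$ (in the spirit of the Appell-type transformation used for KdV and Schr\"odinger in \cite{E_K_P_V2,E_K_P_V3}) should yield a bound
\begin{equation*}
\sup_{t\in[0,1]}\bigl\|e^{\beta(t)|x+y|^{3/2}}\,w(\cdot,t)\bigr\|_{L^2(\R^2)} \leq C
\end{equation*}
for some continuous positive $\beta$. The polynomial weight in \eqref{hypotheses_s} enters precisely here to control the errors produced by the perturbation, and the exponent $4/3+\varepsilon$ is dictated by the Airy scaling in the $s$-direction.

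Next I would establish a \emph{Carleman lower bound} for $L:=\partial_t+\partial_x^3+\partial_y^3$ of the form
\begin{equation*}
\lambda^{1/2}\bigl\|e^{\lambda\phi(s,t)}\,g\bigr\|_{L^2_{t,x,y}} \leq C\,\bigl\|e^{\lambda\phi(s,t)}\,Lg\bigr\|_{L^2_{t,x,y}},
\end{equation*}
valid for smooth $g$ compactly supported away from $t\in\{0,1\}$, where $\phi(s,t)$ is convex in $s$ combined with a suitable time-localization, and $\lambda$ is a large parameter. Because $\phi$ depends only on $s$, the commutator $[L,e^{\lambda\phi}]e^{-\lambda\phi}$ is essentially a cubic polynomial in $\partial_s$ with coefficients involving derivatives of $\lambda\phi(s)$, whose symmetric part is made positive by a convexity requirement on $\phi$; the balance between $\lambda\phi(s)$ and $\lambda^3(\phi'(s))^3$ reproduces the $3/2$ Airy exponent. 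Combining this lower bound with the persistence estimate, through a standard cut-off and translation argument, should give the desired uniqueness.

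The main obstacle I anticipate is the interplay between the anisotropy and the nonlinear perturbation. The weight must detect decay only in $s=x+y$, in harmony with \eqref{right_decay}, while the symmetric operator $\partial_x^3+\partial_y^3$ produces, when conjugated by a purely $s$-dependent weight, a non-trivial cross-contribution involving $\partial_s\partial_r^2$ that has either to be absorbed into a controllable error or to appear with the right sign in the commutator. Simultaneously, the lower-order terms $u_1(\partial_x+\partial_y)w$ and $w\,(\partial_x+\partial_y)u_2$ must be dominated using the polynomial weight $(1+|x+y|)^{2(4/3+\varepsilon)}$ in \eqref{hypotheses_s}, and choosing the Carleman weight whose growth matches exactly this Airy exponent is what should produce the sharpness of the statement, improving the isotropic assumption of \cite{B_I_M}.
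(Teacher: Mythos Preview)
Your overall architecture---linearize to the difference equation, pair a Carleman lower bound with a persistence/upper-bound estimate, and conclude by contradiction once $a$ exceeds a universal threshold---is exactly the paper's scheme. The implementation, however, diverges at the two technical cores, and the obstacle you yourself flag is real and is not resolved in your outline.

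\textbf{Carleman weight.} You take $\phi$ to depend only on $s=x+y$, which forces you to confront the cross term $6\,\partial_s\partial_r^2$ after rotation; you note this but do not say how to control its sign in the commutator. The paper sidesteps this entirely by \emph{not} rotating and choosing instead
\[
\phi(x,y,t)=\Bigl(\tfrac{x}{R}+\varphi(t)\Bigr)^{2}+\Bigl(\tfrac{y}{R}+\varphi(t)\Bigr)^{2},
\]
symmetric in $x,y$ but \emph{not} a function of $x+y$ alone. Since both the leading operator $\partial_x^3+\partial_y^3$ and this weight are sums of one-variable pieces, the commutator $[S_\alpha,A_\alpha]$ decouples into two independent KdV-type computations and no mixed derivative ever appears. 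The resulting lower bound lives on the square annulus $Q_R=\{R-1\le|x+y|\le R,\ R-1\le|x-y|\le R\}$, which is already enough for the contradiction. Your purely $s$-dependent weight may be workable, but the $\partial_s\partial_r^2$ contribution is a genuine sign issue you would have to settle, not a perturbative error.

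\textbf{Upper bound and persistence.} The paper does not use log-convexity with the weight $e^{\beta(t)|x+y|^{3/2}}$. Persistence is established only for \emph{linear} exponential weights $e^{2\beta(x+y)}$ (all $\beta>0$) via a Kato-type energy argument, and the upper bound on $Q_R$ comes from an a~priori estimate (Lemma~\ref{lemma2.2bis}) obtained by inverting the conjugated operator $e^{\beta|x+y|}(\partial_t+\partial_x^3+\partial_y^3)e^{-\beta|x+y|}$ as an explicit Fourier multiplier; the $3/2$ exponent appears only at the end by choosing $\beta\sim a R^{1/2}$ scale-by-scale. Finally, the polynomial weight $(1+|x+y|)^{2(4/3+\varepsilon)}$ is not used inside the energy or Carleman steps: it enters through an interpolation lemma to show that the coefficients $a_1=4^{-1/3}u_1$ and $a_0=4^{-1/3}(\partial_x+\partial_y)u_2$ belong to the mixed spaces $L^\infty\cap L^2_xL^\infty_{yt}$ and $L^2_xL^\infty_{yt}\cap L^1_xL^\infty_{yt}$, with norms that become small after cutting to $\{|x+y|\ge R\}$, which is what allows the upper-bound estimate to close.
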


Notice that, if $u_1,u_2$ solve~\eqref{Z-K_s}, then $v:=u_1-u_2$ is a solution to
\begin{equation}\label{eq:difference}
	\partial_t v + (\partial_x^3 + \partial_y^3) v + 4^{-\frac{1}{3}} u_1 (\partial_x + \partial_y) v + 4^{-\frac{1}{3}}(\partial_x + \partial_y)u_2\, v=0.
\end{equation}
Since it comes into play in the proof of Theorem~\ref{main_result_s} and we think it is of independent interest, we state the following linear result for~\eqref{eq:difference} (actually it is considered a slightly more general equation than~\eqref{eq:difference}).

\begin{theorem}\label{linear_result}
	Suppose that for some small $\varepsilon>0,$ 
	\begin{equation*}
		v\in C\big( [0,1]; H^3(\R^2) \cap L^2((1+\abs{x + y})^{2(\frac{4}{3} + \varepsilon)} \, dx dy) \big) \cap C^1( [0,1]; L^2(\R^2)),
	\end{equation*}
	is a solution of
	\begin{equation}\label{generalized_Z-K_diff}
		\partial_t v + (\partial_x^3 + \partial_y^3) v + a_1(x,y,t)(\partial_x + \partial_y) v + a_0(x,y,t)v=0,
	\end{equation}
	where $a_0 \in L^\infty \cap L_x^2L_{y,t}^\infty $ and $a_1 \in L^\infty \cap L_x^2 L_{y t}^\infty \cap L_x^1 L_{y t}^\infty.$
	
	Then there exists a universal constant $a_0>0$ such that if for some $a>a_0$
	\begin{equation*}
		v(0), v(1) \in L^2(e^{a \abs{x+y}^{3/2}} \, dx dy),
	\end{equation*}
	then $v\equiv 0.$
\end{theorem}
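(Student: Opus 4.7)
The plan follows the Escauriaza--Kenig--Ponce--Vega strategy, adapted to the symmetric Zakharov--Kuznetsov operator in the direction $\xi:=x+y$: combine an upper bound that propagates the exponential decay from $t=0$ and $t=1$ to every intermediate time with a Carleman-type lower bound forcing $v\equiv 0$. The choice of $\xi=x+y$ is dictated by \eqref{right_decay}: along this direction the fundamental solution of $\partial_t+\partial_x^3+\partial_y^3$ inherits the KdV-type $3/2$-power asymptotics, and it is precisely along $\xi$ that the first-order coefficient $a_1(\partial_x+\partial_y)$ differentiates the solution.

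For the upper estimate, introduce $f:=e^{\phi}v$ with a weight $\phi(x,y,t)=\lambda\,h(t)\,\rho(x+y)$, where $\rho(s)$ smoothly agrees with $\abs{s}^{3/2}$ for $\abs{s}\geq 1$ and $h>0$ is chosen so that $\phi(\cdot,0),\phi(\cdot,1)$ are controlled by $a\abs{x+y}^{3/2}$. Since $\phi$ is constant along lines $x+y=\text{const}$, conjugating \eqref{generalized_Z-K_diff} by $e^\phi$ produces commutators depending only on $\partial_\xi\phi$ and its derivatives; testing the conjugated equation against $f$ yields a top-order contribution of the schematic form $\int (\partial_\xi\phi)^2\abs{\partial_\xi f}^2-(\partial_\xi^3\phi)\abs{f}^2+\dotsb$ which is positive for our choice of $\phi$. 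The perturbative terms involving $a_0,a_1$ are absorbed thanks to $a_0\in L^\infty\cap L^2_xL^\infty_{yt}$ and $a_1\in L^\infty\cap L^2_xL^\infty_{yt}\cap L^1_xL^\infty_{yt}$, and a Gronwall argument then gives
\begin{equation*}
\sup_{t\in[0,1]}\norm{e^{a\abs{x+y}^{3/2}}v(t)}_{L^2(\R^2)}\leq C\bigl(\norm{e^{a\abs{x+y}^{3/2}}v(0)}_{L^2}+\norm{e^{a\abs{x+y}^{3/2}}v(1)}_{L^2}\bigr).
\end{equation*}
The polynomial decay built into \eqref{hypotheses_s} is exactly what is needed to make the integrations by parts rigorous via a truncation at $\abs{\xi}=N$.

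For the lower bound, I would establish a Carleman estimate of the form
\begin{equation*}
\beta^{3/2}\norm{e^{\psi_R}g}_{\Lt}\leq C\,\norm{e^{\psi_R}\bigl(\partial_t+\partial_x^3+\partial_y^3\bigr)g}_{\Lt},
\end{equation*}
valid for $\beta$ large and $g\in C_c^\infty(\R^2\times(0,1))$ supported in $\{x+y\geq -R\}$. A natural weight is $\psi_R(\xi,t)=\beta\,\varphi_R(\xi)/\sqrt{t(1-t)}$ with $\varphi_R(\xi)\asymp (\xi+R)_+^{3/2}$; since $\psi_R$ depends only on $\xi$ and $t$, the conjugation of $\partial_x^3+\partial_y^3$ by $e^{\psi_R}$ reduces, up to a harmless transverse direction, to the one-dimensional KdV-type conjugation in $\xi$, and the symmetric/antisymmetric splitting produces a coercive term of order $\beta^3\xi^{3/2}\abs{g}^2$ dominating the commutator error precisely because of the $3/2$-exponent, as in \cite{E_K_P_V2}.

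To conclude, apply this Carleman estimate to $\chi(\xi/R)\,\eta(t)\,v$, use the equation to replace $(\partial_t+\partial_x^3+\partial_y^3)v$ by $-a_1(\partial_x+\partial_y)v-a_0 v$, and split the right-hand side according to the anisotropic integrability of $a_0,a_1$: the interior contribution is absorbed for $\beta$ large, while the cut-off commutators are supported where $\xi\sim R$ and are controlled by $e^{c\beta R^{3/2}}$ times the upper-bound norm from step (a). One thus obtains $\norm{v}_{L^2(Q_R)}\lesssim e^{-c\beta R^{3/2}}$ on a set $Q_R\subset\{\xi\geq 2R\}\times[1/4,3/4]$; letting $R\to\infty$ forces $v\equiv 0$ on the strip $[1/4,3/4]$, and a standard iteration in time extends the conclusion to $[0,1]$. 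The main obstacle is the Carleman estimate: although its weight is effectively one-dimensional in $\xi$, it must still dominate the anisotropic perturbations produced by $a_0,a_1$ uniformly in $\beta$ and $R$; the critical exponent $3/2$ of \eqref{decay_properties_s} is exactly the threshold at which the coercive gain beats the commutator error, mirroring the sharp KdV picture.
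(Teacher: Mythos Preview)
Your overall plan (upper bound from the decay hypothesis, Carleman lower bound, contradiction) matches the paper's, but the contradiction step is set up backwards. In the paper the Carleman estimate is used to show that \emph{if $v$ is nontrivial on a fixed interior cylinder} $Q=\{x^2+y^2\le 1\}\times[r,1-r]$, then the $H^2$-mass on the distant frame $Q_R=\{R-1\le|x\pm y|\le R\}$ satisfies the \emph{lower} bound $A_R(v)\ge c_0 e^{-c_1 R^{3/2}}$; this is then contradicted, for $a>18^{3/2}c_1$, by the upper bound $A_R(v)\le c\,e^{-a(R/18)^{3/2}}$ coming from the decay at $t=0,1$. Your sentence ``$\|v\|_{L^2(Q_R)}\lesssim e^{-c\beta R^{3/2}}$ on $Q_R\subset\{\xi\ge 2R\}\times[1/4,3/4]$; letting $R\to\infty$ forces $v\equiv0$'' carries no information: as $R\to\infty$ your $Q_R$ escapes to infinity, so nothing is learned about $v$ on any fixed set. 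Relatedly, your Carleman weight $\psi_R\sim\beta(\xi+R)_+^{3/2}$ is \emph{increasing} in $\xi$, hence larger at the spatial cut-off $\xi\sim R$ than on the interior; that is the wrong monotonicity for the localization you describe, so the cut-off commutators cannot be beaten by the interior term.

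The technical realization of each step also differs from the paper. For the Carleman estimate the paper does \emph{not} use a one-dimensional weight in $\xi=x+y$; it uses the genuinely two-dimensional \emph{quadratic} weight $\phi(x,y,t)=(x/R+\varphi(t))^2+(y/R+\varphi(t))^2$, and the exponent $3/2$ never appears in the weight itself but enters only through the constraint $\alpha^2\ge M_1R^3$ between the Carleman parameter $\alpha$ and the scale $R$, which is precisely what is needed to absorb the first-order perturbation $a_1(\partial_x+\partial_y)$. For the upper bound the paper does not run a Gronwall in $L^2(e^{a|x+y|^{3/2}})$; instead it proves a mixed-norm estimate (Lemma~\ref{lemma2.2bis}) for the inverse of the conjugated operator $\partial_t+(\partial_x-\beta)^3+(\partial_y-\beta)^3$ with the \emph{linear} weight $e^{\beta|x+y|}$, applies it to a spatial truncation $\mu_R v$, and only afterwards optimizes $\beta\sim aR^{1/2}$ to recover the $3/2$-power on $Q_R$. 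A direct two-sided Gronwall in the $3/2$-weight is delicate because the equation is not time-reversible; the paper's Theorem~\ref{thm:optimality} shows that one-sided persistence does hold by energy methods, but only with a strictly decreasing coefficient $a(t)$ and only for $(x+y)_+$.
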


We shall see that, under the hypotheses of Theorem~\ref{main_result_s},~\eqref{eq:difference} turns out to be a particular case of~\eqref{generalized_Z-K_diff} with $a_0=4^{-\frac{1}{3}}(\partial_x+\partial_y)u_2$ and $a_1=4^{-\frac{1}{3}} u_1,$ therefore our result in Theorem~\ref{main_result_s} will follow as a consequence of the validity of Theorem~\ref{linear_result}.

\newpage
The optimality of Theorem~\ref{main_result_s} (and thus of Theorem~\eqref{main_result}) is proved in the following theorem.
\begin{theorem}\label{thm:optimality}
	Suppose that for some $\varepsilon>0$
	\begin{equation*}
		u \in C\big( [0,1]; H^4(\R^2) \cap L^2((1+\abs{x + y})^{2(\frac{4}{3} + \varepsilon)} \, dx dy) \big) \cap C^1( [0,1]; L^2(\R^2)),
	\end{equation*}
	is a solution of the equation~\eqref{Z-K_s}. Let $a_0$ be a positive constant such that 
	\begin{equation*}
		u(0)\in L^2(e^{a_0(x+y)_+^{3/2}}\, dx dy),
	\end{equation*}
	then $u$ satisfies
	\begin{equation*}
		\sup_{t\in [0,1]} \int_{\R^2} e^{a(t) (x+y)_+^{3/2}}\abs{u(x,y,t)}^2\, dx dy\leq C=C(a_0, \norm{u}_{C([0,1]; H^2(\R^2))}, \norm{e^{a_0(x+y)_+^{3/2}/2} u}_{C([0,1];H^2(\R^2))}),
	\end{equation*}
	with
	\begin{equation*}
		a(t)= \frac{a_0}{(1+ 27 a_0^2 t/2)^{1/2}}.
	\end{equation*}
\end{theorem}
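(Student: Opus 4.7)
The plan is to run a weighted $L^2$ energy estimate with weight $e^{\phi(x+y,t)}$, where $\phi(s,t):=a(t)s_+^{3/2}$ and $a:[0,1]\to(0,a_0]$ is to be chosen so that the leading dispersive growth cancels $\phi_t$. Writing $s=x+y$ and $\phi_s,\phi_{ss},\phi_{sss}$ for the $s$-derivatives of $\phi$ (so that $\phi_x=\phi_y=\phi_s$, $\phi_{xx}=\phi_{yy}=\phi_{ss}$, etc.), multiplying~\eqref{Z-K_s} by $2e^{\phi}u$ and integrating by parts separately in $x$ and $y$ produces
\begin{align*}
\frac{d}{dt}\int_{\R^2} e^{\phi}u^2\,dx\,dy
&=\int_{\R^2}\bigl(\phi_t+2\phi_{sss}+6\phi_s\phi_{ss}+2\phi_s^3\bigr)e^{\phi}u^2\,dx\,dy\\
&\quad-3\int_{\R^2}\phi_s\,e^{\phi}(u_x^2+u_y^2)\,dx\,dy+\frac{4}{3\cdot 4^{1/3}}\int_{\R^2}\phi_s\,e^{\phi}u^3\,dx\,dy.
\end{align*}

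Two structural features drive the rest. First, $\phi_s\geq 0$, so the gradient term is non-positive and may be discarded. Second, the only potentially dangerous part of the coefficient of $u^2$ is $2\phi_s^3=\tfrac{27}{4}a(t)^3 s^{3/2}$, whose $s^{3/2}$ growth must be absorbed by $\phi_t=a'(t)s^{3/2}$. This forces $a$ to solve the ODE $a'(t)=-\tfrac{27}{4}a(t)^3$ with $a(0)=a_0$, whose explicit solution is $a(t)=a_0\bigl(1+\tfrac{27}{2}a_0^2 t\bigr)^{-1/2}$, exactly the function in the statement. With this choice the surviving contribution to the coefficient of $u^2$ is $2\phi_{sss}+6\phi_s\phi_{ss}=-\tfrac{3}{4}a(t)s^{-3/2}+\tfrac{27}{4}a(t)^2$: the first summand is non-positive and is dropped, while the second is uniformly bounded by $\tfrac{27}{4}a_0^2$.

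It remains to absorb the cubic correction. By Sobolev embedding $H^2(\R^2)\hookrightarrow L^\infty$ and hypothesis~\eqref{hypotheses_s}, $\|u\|_{L^\infty_{t,x,y}}\lesssim\|u\|_{C([0,1];H^2)}$, hence
\begin{equation*}
\Bigl|\int_{\R^2}\phi_s\,e^{\phi}u^3\,dx\,dy\Bigr|\leq\|u\|_{L^\infty}\int_{\R^2}\phi_s\,e^{\phi}u^2\,dx\,dy\lesssim a(t)\int_{\R^2}s^{1/2}e^{\phi}u^2\,dx\,dy.
\end{equation*}
The elementary inequality $s^{1/2}\leq 1+s^{3/2}$ together with $a(t)s^{3/2}=\phi$ splits the right-hand side into a benign $\int e^{\phi}u^2$ piece that feeds directly into Gronwall, and an $\int\phi\,e^{\phi}u^2$ piece that is controlled by the pointwise bound $|u(x,y,t)|\lesssim e^{-a_0 s_+^{3/2}/2}$, which is itself a Sobolev consequence of the weighted norm $\|e^{a_0 s_+^{3/2}/2}u\|_{C([0,1];H^2)}$ appearing in the stated constant. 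A differential inequality of the form $\tfrac{d}{dt}\int e^{\phi}u^2\leq A\int e^{\phi}u^2+B$, with $A,B$ depending on the three norms in the theorem, then yields the uniform bound by Gronwall.

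The main technical obstacle is the rigorous justification of these integrations by parts, because $\phi_{ss}$ and $\phi_{sss}$ are singular as $s\to 0^+$ while $e^{\phi}$ grows super-polynomially as $s\to+\infty$, so every intermediate integral must first be shown to be finite. The standard fix is to approximate $\phi$ by a family $\phi_N(s,t):=a(t)\eta_N(s)$ with $\eta_N\in C^\infty(\R)$ bounded, vanishing for $s\leq 0$ and coinciding with $s_+^{3/2}$ on $[0,N]$; under~\eqref{hypotheses_s} every integral in the $\phi_N$-version of the identity is finite, the Gronwall step produces constants independent of $N$, and one passes to the limit $N\to\infty$ by monotone convergence and Fatou's lemma to recover the bound for $\phi$.
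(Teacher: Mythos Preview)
Your overall strategy---weighted $L^2$ energy identity, ODE choice for $a(t)$ to cancel the $2\phi_s^3$ growth, regularization plus Fatou---is exactly the paper's. Your formal identity and the computation of the coefficient $\phi_t+2\phi_{sss}+6\phi_s\phi_{ss}+2\phi_s^3$ on $\{s>0\}$ are correct.

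There are, however, two concrete gaps.

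\textbf{Cubic term.} Your split $s^{1/2}\leq 1+s^{3/2}$ produces the quantity $\int\phi\,e^{\phi}u^2$, which you then claim is bounded by a constant via $|u|\lesssim e^{-a_0 s_+^{3/2}/2}$. This does not work: using the pointwise bound on one factor of $u$ leaves $\int s^{3/2}e^{(a(t)-a_0/2)s^{3/2}}|u|$, whose exponent is still positive; using it on both factors leaves a function of $s=x+y$ alone, whose integral over $\R^2$ is infinite (no decay in $x-y$). The correct move is not to split at all: pull out $\|s^{1/2}u\|_{L^\infty}$ directly from $\int\phi_s e^{\phi}u^3$, and observe that the same pointwise Sobolev bound gives $s^{1/2}|u|\lesssim s^{1/2}e^{-a_0 s^{3/2}/2}\cdot\|e^{a_0 s_+^{3/2}/2}u\|_{H^2}$, which is uniformly bounded in $s\geq 0$ by a constant depending only on $a_0$. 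This yields $\bigl|\int\phi_s e^{\phi}u^3\bigr|\leq C_{a_0,u}\int e^{\phi}u^2$, a clean Gronwall term. The paper does exactly this (writing the bound as $\|(x+y)^{1/2}u(t)\|_{L^\infty}$).

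\textbf{Regularization near $s=0$.} You correctly flag that $\phi_{ss},\phi_{sss}$ are singular at $s=0^+$, but your proposed $\eta_N$ does not exist: a $C^\infty$ function vanishing for $s\leq 0$ cannot coincide with $s^{3/2}$ on $[0,N]$, because $s^{3/2}$ is not $C^2$ at the origin (its second derivative blows up). You must also modify the weight on a neighborhood of $s=0$. The paper handles this explicitly: it replaces $s^{3/2}$ by a quintic polynomial $\theta(s)$ on $[0,1]$ (and by a constant for $s\leq 0$), chosen so that the composite weight is $C^2$, $\partial_s\varphi_n\geq 0$ is preserved, and on $0\leq s\leq 1$ one has $\partial_s^j\phi_n\leq C_{a_0}\phi_n$ for $j\leq 3$. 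Only then does the Gronwall constant come out independent of $N$ (or $n$).
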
  

\begin{remark}
	Notice that following the argument in~\cite{E_K_P_V2} it can be proved that given $a_0>0$ and $\delta>0$ there exists a non trivial initial datum $u_0\in \mathbb{S}(\R^2),$ $c_1, c_2>0$ and an interval of time $\Delta T>0$ such that the corresponding solution $u(x,y,t)$ of~\eqref{Z-K_s} with initial datum $u_0:=u(0)$ satisfies
	\begin{equation*}
		c_1\,e^{-(a_0 + \delta) (x+y)^{3/2}}\leq u(x,y,t)\leq c_2\, e^{-(a_0 - \delta) (x+y)^{3/2}}, \qquad x+y> > 1,\quad t\in [0, \Delta T]. 
	\end{equation*}
\end{remark}

\bigskip
The paper is organized as follows. Section 2 represents the core of our work as regards to the symmetric Z-K, here we are concerned with the proofs of Theorem~\ref{linear_result} and of its nonlinear counterpart Theorem~\ref{main_result_s}. In order to do that, following the scheme in~\cite{E_K_P_V2}, we introduce two types of estimates, a lower bound which follows after performing a suitable Carleman estimate and an upper bound for the $H^2$ norm of the solutions which exploits the exponential decay assumed for the initial and final data.

In Section 3, our main result Theorem~\ref{main_result} for the original Z-K equation is proved.

Finally in Section 4 the proof of Theorem~\ref{thm:optimality} is given.

\bigskip
\subsection*{Acknowledgment}
The authors would like to thank Pedro Caro for helpful comments and suggestions.

The research of L.C. is supported by the Basque Government through the BERC 2014-2017 program and by Spanish Ministry of Economy and Competitiveness MINECO: BCAM Severo Ochoa excellence accreditation SEV-2013-0323.


\section{Proof of Theorems~\ref{main_result_s}-~\ref{linear_result}}
This section is concerned with the proof of Theorems~\ref{main_result_s}-~\ref{linear_result}. As sketched above, we will prove a lower and an upper bound in suitable weighted norms for the solution $v$ to~\eqref{generalized_Z-K_diff} and then perform a contradiction argument.

\subsection{Lower bound}
This subsection is mainly interested in the proof of the following result.

\begin{theorem}\label{lower_bound}
	Let $v \in C([0,1]; H^3(\R^2))$ be a solution of~\eqref{generalized_Z-K_diff} with $a_0, a_1 \in L^\infty(\R^3).$ Assume that 
	\begin{equation*}
		\int_{\R^2} \int_0^1 \big(\abs{v}^2 + \abs{\nabla v}^2 + \abs{\Delta v}^2\big)\, dx\,dy\,dt \leq A^2.
	\end{equation*}
	Let $\delta>0,$ $r \in (0,\frac{1}{2})$ and $Q:= \{ (x,y,t)\colon \sqrt{x^2 + y^2}\leq 1,\, t \in [r, 1-r]\}$ and suppose that $\norm{v}_{L^2(Q)}\geq \delta.$ Then there exist constants $\widetilde{R}_0, c_0, c_1$ depending on $A, \norm{a_0}_{\infty}$ and $\norm{a_1}_{\infty},$ such that for $R\geq \widetilde{R}_0$
	\begin{equation*}
		A_R(v):= \Big( \int_0^1 \int_{Q_R} \big(\abs{v}^2 + \abs{\nabla v}^2 + \abs{\Delta v}^2\big)\, dx\, dy\, dt \Big)^\frac{1}{2}\geq c_0 e^{- c_1 R^\frac{3}{2}},
	\end{equation*}
	where $Q_R:=\{ (x, y) \colon R-1\leq \abs{x+y}\leq R \,\land\, R-1\leq \abs{x-y}\leq R \}.$
\end{theorem}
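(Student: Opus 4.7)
The proof follows the Escauriaza-Kenig-Ponce-Vega blueprint for the KdV equation in~\cite{E_K_P_V2}, adapted to the two-dimensional operator $L_0 := \partial_t + \partial_x^3 + \partial_y^3$. The plan is to combine a Carleman inequality for $L_0$ with a cut-off argument adapted to the annular set $Q_R$ and then balance the Carleman parameter against $R$. Since $a_0,\,a_1\in L^\infty$, the lower-order terms $a_0 v + a_1 (\partial_x + \partial_y) v$ arising from~\eqref{generalized_Z-K_diff} will be absorbed into the principal Carleman gain once the Carleman parameter $\lambda$ is taken sufficiently large in terms of $\|a_0\|_\infty + \|a_1\|_\infty$.

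First I would establish a Carleman estimate of the form
\[
\lambda^{3/2}\bigl\| e^{\lambda \phi}\, g \bigr\|_{L^2(\R^2 \times [0,1])} \;\lesssim\; \bigl\| e^{\lambda \phi}\, L_0 g \bigr\|_{L^2(\R^2 \times [0,1])},
\]
valid for every $g \in C_c^\infty(\R^2 \times (0,1))$ and every $\lambda \geq \lambda_0$. Since $Q_R$ is cut out by the diagonal variables $x\pm y$, and the principal symbol $i(\xi^3+\eta^3)$ of $\partial_x^3+\partial_y^3$ remains third order in the rotated coordinates, a natural choice is a weight of the shape $\phi(x,y,t) = \varphi_+(x+y+\beta_+(t)) + \varphi_-(x-y+\beta_-(t))$, where each $\varphi_\pm$ is a convex regularisation of the $3/2$-power and $\beta_\pm$ are tuned so that the conjugated operator $e^{\lambda\phi} L_0 e^{-\lambda\phi}$ has a uniformly positive (anti)symmetric part. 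The $\lambda^{3/2}$ factor is the one-derivative-per-order gain characteristic of a third-order Carleman, and provides the room to absorb the $L^\infty$ coefficients of the equation.

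Next I would plug into the Carleman estimate the product $g = \theta_R(x,y)\,\mu(t)\, v$, where $\theta_R$ is smoothly equal to $1$ on $\{|x+y|\le R-1,\,|x-y|\le R-1\}$ and supported in $\{|x+y|\le R\}\cap\{|x-y|\le R\}$, and $\mu\in C_c^\infty((0,1))$ equals $1$ on $[r,1-r]$. By construction $\nabla\theta_R$ is supported in $Q_R$ and $\mu'$ in $[0,r]\cup[1-r,1]$. Substituting $L_0 v = -a_0 v - a_1(\partial_x+\partial_y) v$ on the right and absorbing the two lower-order terms for $\lambda$ large gives
\[
\lambda^{3/2}\bigl\| e^{\lambda\phi}\,\theta_R\mu\, v \bigr\|_{L^2}\;\lesssim\; \bigl\| e^{\lambda\phi}\,[L_0,\theta_R\mu]\, v \bigr\|_{L^2},
\]
where the commutator contains spatial derivatives of $v$ up to order two (with coefficients supported in $Q_R$) and a single time derivative (with coefficients supported in $[0,r]\cup[1-r,1]$).

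The final step uses the double bound $\phi\geq m_0$ on $Q$ and $\phi\leq M R^{3/2}$ on the support of the commutator: invoking $\|v\|_{L^2(Q)}\geq \delta$ on the left and the global $H^2$-bound $A$ to handle the time-boundary contribution on the right, one arrives at
\[
\lambda^{3/2}\, e^{\lambda m_0}\,\delta \;\lesssim\; e^{\lambda M R^{3/2}}\bigl(A_R(v)+A\bigr),
\]
and choosing $\lambda$ of moderate size, independent of $R$, yields the claimed $A_R(v) \geq c_0 e^{-c_1 R^{3/2}}$ once $R \geq \widetilde R_0$. The main obstacle is Step~1: constructing $\phi$ so that pseudo-convexity for $L_0$ holds simultaneously in the two directions $x+y$ and $x-y$ (a positivity condition on a Poisson-bracket-type expression involving three derivatives of $\phi$), and so that $\phi$ is arranged to be small at the time endpoints $t=0,1$, keeping the $A$-contribution from the $\mu'$-term subordinate to the dominant $\delta$-term. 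Once a workable weight is in place, the remaining steps are a routine two-dimensional adaptation of the one-dimensional KdV scheme.
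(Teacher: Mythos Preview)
Your general outline (Carleman estimate for $\partial_t+\partial_x^3+\partial_y^3$, then cut-off and balance) is correct and matches the paper's scheme, but the final step as you describe it does not close, and the reason traces back to two concrete choices you make differently from the paper.

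First, the concluding algebra is wrong as written. From
\[
\lambda^{3/2} e^{\lambda m_0}\,\delta \;\lesssim\; e^{\lambda M R^{3/2}}\bigl(A_R(v)+A\bigr)
\]
with $\lambda$ and $m_0$ fixed you only get $A_R(v)\ge C_0 e^{-\lambda M R^{3/2}}-A$, which is negative for large $R$. For the argument to work you need the $A$--contribution to carry a \emph{smaller} exponential than the $\delta$--term, i.e.\ the weight $\phi$ must be at least as large on $Q$ as it is on the support of the time--boundary piece of the commutator. With a pure time cut-off $\mu(t)$ this is impossible: on $\{\mu'\neq 0\}$ the spatial variables still range over the whole square $\{|x\pm y|\le R\}$, so $\phi$ is as big as $\sim R^{3/2}$ there, while on $Q$ it is $O(1)$. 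The paper avoids this by taking a \emph{space--time} cut-off $\mu\bigl(\tfrac{x}{R}+\varphi(t),\tfrac{y}{R}+\varphi(t)\bigr)$ adapted to the level sets of $\phi$, so that derivatives of $\mu$ live where $\phi\in[1,4]$; combined with $\varphi(t)=0$ near $t=0,1$ this simultaneously produces compact support in time and forces the $A$--term to carry the exponential $e^{4\alpha}$, the \emph{same} one as the $\delta$--term.

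Second, the weight and the parameter scaling are different in the paper. Instead of a $3/2$--power weight with a fixed $\lambda$, the paper uses the quadratic, $R$--scaled weight $\phi=|\tfrac{\varkappa}{R}+\varphi(t)\xi|^2$ and a parameter $\alpha$ satisfying $\alpha^2\ge M_1 R^3$. The $R^{3/2}$ in the conclusion does not come from a $3/2$--power in $\phi$; it comes from choosing $\alpha=M_1^{1/2}R^{3/2}$. After cancelling $e^{4\alpha}$ one is left with
\[
c\,\tfrac{\alpha^{5/2}}{R^3}\,\delta \;\le\; c_1 e^{21\alpha}A_R(v)+c_2 A,
\]
and now the left side grows like $R^{3/4}$, eventually dominating $c_2 A$; this is what yields $A_R(v)\ge c_0 e^{-c_1 R^{3/2}}$. (A smaller point: to absorb $a_1(\partial_x+\partial_y)v$ you also need $\|e^{\alpha\phi}\nabla g\|$ on the left of the Carleman, not only the zero--order term; the paper's Lemma~\ref{Carleman1} carries exactly such a gradient term with coefficient $\alpha^{3/2}/R^2$.)
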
	

\begin{figure}[ht]
\centering
\begin{tikzpicture}[scale=.4]
\draw[domain=0:5,color=gray,samples=2]
plot(\x, {-\x +5});
\draw[domain=0:5,color=gray,samples=2]
plot(\x, {\x -5});
\draw[domain=-5:0,color=gray,samples=2]
plot(\x, {-\x -5});
\draw[domain=-5:0,color=gray,samples=2]
plot(\x, {\x +5});
\draw[domain=0:6,color=gray,samples=2]
plot(\x, {-\x +6});
\draw[domain=0:6,color=gray,samples=2]
plot(\x, {\x -6});
\draw[domain=-6:0,color=gray,samples=2]
plot(\x, {-\x -6});
\draw[domain=-6:0,color=gray,samples=2]
plot(\x, {\x +6});
\filldraw[gray] (0,5)-- (0,6)--(6,0)--(5,0);
\filldraw[gray] (0,-5)-- (0,-6)--(6,0)--(5,0);
\filldraw[gray] (0,-5)-- (0,-6)--(-6,0)--(-5,0);
\filldraw[gray] (0,5)-- (0,6)--(-6,0)--(-5,0);
\coordinate [label=above left: \textcolor{black}{$\scriptstyle R-1$}] (a) at (4.4,0); 
\fill[black] (4.93,0) circle (3pt);
\coordinate [label=above right: \textcolor{black}{$\scriptstyle R$}] (b) at (5.7,0); 
\fill[black] (6,0) circle (3pt);
\draw[thick,->](-7,0)--(7.4,0) node[below]{$x$};
\draw[thick,->](0,-7)--(0,7.4) node[right]{$y$};
\end{tikzpicture}
\caption{The region $Q_R$} 
\end{figure}
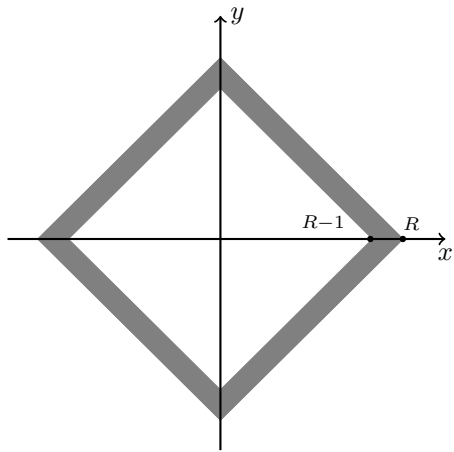

\newpage
The previous idea of establishing lower bounds for the asymptotic behavior of a suitable norm of the solution in an annulus domain  stems from a work by Bourgain and Kenig~\cite{B_K} on a class of stationary Schr\"odinger operators $-\Delta + V(x)$ in which the property of spectral localization, that is the phenomenon for which the point spectrum of the analyzed operator presents exponentially decaying eigenfunctions, is studied.

In that work they needed precise quantitative information on the rate of local vanishing for eigenfunctions, more precisely, local bounds on the eigenfunctions both from above and from below were required. Unlike the upper bound, which just needs classical tools to be achieved, the lower bound is a more subtle issue.
The statement (Lemma 3.10 in~\cite{B_K}) is as follows.
\begin{lemma}
	Let $u$ be a bounded solution of $\Delta u + Vu=0$ in $\R$ with suitable additional assumptions about $V.$ Let $x_0\in \R^n, \abs{x_0}=R>1.$ Then
	\begin{equation*}
		\max_{R-1<\, x<\, R}\, \abs{u(x)} >c_0 e^{-c_1 (\log R) R^\frac{4}{3}}.
	\end{equation*}
\end{lemma}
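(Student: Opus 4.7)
My plan is to follow the Bourgain--Kenig approach of combining a tailored Carleman estimate for the Laplacian with a propagation of smallness argument. First I would normalize: rescaling the variable $x$, reduce to the case $\|V\|_\infty \le 1$, and since $u$ is bounded and nontrivial, invoke qualitative unique continuation together with a translation to assume there is a point $x_\ast$ with $|x_\ast| \le 1$ at which $|u(x_\ast)| \ge c > 0$ for a universal constant $c$. The goal is then to propagate this quantitative lower bound from the inner unit ball out to the annulus $\{R-1 < |x| < R\}$.

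The core ingredient is a Carleman inequality of the form
\begin{equation*}
\alpha^3 \int e^{2\alpha \phi(x)} |w|^2 \, dx + \alpha \int e^{2\alpha \phi(x)} |\nabla w|^2 \, dx \le C \int e^{2\alpha \phi(x)} |\Delta w|^2 \, dx,
\end{equation*}
valid for $w \in C_c^\infty(B_{4R}\setminus\overline{B_{1/4}})$, where $\phi$ is a radial weight of the shape $\phi(x) = \Phi(\log|x|)$ for a suitable convex function $\Phi$. I would apply this to $w = \chi u$ for a smooth cutoff $\chi$ equal to $1$ on the annular region $\{1/2 \le |x| \le R\}$ and vanishing outside a slightly larger shell. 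Using $\Delta u = -Vu$ with $\|V\|_\infty \le 1$, the contribution $V\chi u$ on the right-hand side is absorbed by the $\alpha^3$ term on the left once $\alpha$ exceeds a universal constant, leaving only the commutator terms $[\Delta,\chi]u$, which are supported in two thin annular shells near $|x| \sim 1$ and $|x| \sim R$.

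The inner lower bound at $x_\ast$, combined with a standard Caccioppoli estimate to control $\nabla u$ and $\Delta u$ pointwise on the support of $\nabla\chi$ near the inner shell, converts the left-hand side into a positive universal lower bound weighted by $e^{2\alpha \Phi(\log 1)}$. On the outer side, the boundedness of $u$ and elliptic regularity turn the commutator contribution at $|x| \sim R$ into a constant multiple of $\bigl(\sup_{R-1 < |x| < R}|u|\bigr)^2$ times $e^{2\alpha \Phi(\log R)}$. Rearranging these two contributions and choosing $\alpha$ of order $R^{1/3}\log R$ (the precise scaling being dictated by the chosen polynomial-in-$\log|x|$ form of $\Phi$) balances the gain $\alpha^3$ against the growth of $\Phi$ and yields the claimed lower bound $c_0 e^{-c_1 (\log R) R^{4/3}}$.

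The principal obstacle is the calibration of the weight $\Phi$ and of $\alpha$. A naive choice such as $\phi = -|x|$ or $\phi = -|x|^2$ would only produce exponents of type $R$ or $R^2$, far from the target $R^{4/3}\log R$. The sharp exponent requires a weight whose conjugated symbol $p_\phi(x,\xi) = |\xi + i\alpha\nabla\phi|^2$ has a strictly positive Poisson bracket $\{\bar p_\phi, p_\phi\}$ in a strip as wide as possible, together with the precise scaling of $\alpha$ in terms of $R$; this fine tuning is the technical heart of the Bourgain--Kenig analysis and is where the $4/3$ exponent enters.
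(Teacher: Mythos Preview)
The paper does not give its own proof of this lemma: it is quoted verbatim as Lemma~3.10 of Bourgain--Kenig~\cite{B_K}, and the only indication of a proof is the sentence ``This was derived from the following Carleman type estimate,'' followed by the statement of the Bourgain--Kenig Carleman inequality
\[
\alpha^3 \int w^{-1-2\alpha} f^2 \le C_3 \int w^{2-2\alpha} (\Delta f)^2,\qquad w(r)\sim r,\ f\in C_0^\infty(B_{10}\setminus\{0\}).
\]
So there is nothing to compare beyond the Carleman ingredient itself, and your sketch is consistent with what the paper indicates: insert a cutoff of $u$ into a Carleman estimate, absorb the potential by the $\alpha^3$ gain, and read off the decay from the ratio of the weight at the inner and outer shells.

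That said, your calibration of $\alpha$ is off and would not produce the exponent $R^{4/3}\log R$. In the Bourgain--Kenig scheme one rescales the annulus $\{R-1<|x|<R\}$ into $B_{10}$; the rescaled potential then has $L^\infty$ norm of order $R^2$, so the absorption step requires $\alpha^3\gtrsim R^4$, i.e.\ $\alpha\sim R^{4/3}$, not $R^{1/3}\log R$. The $\log R$ factor does not come from the choice of $\alpha$ but from the weight itself: the inner reference point sits at rescaled radius $\sim 1/R$, and the weight $w^{-\alpha}\sim r^{-\alpha}$ produces a ratio $R^{\alpha}=e^{\alpha\log R}\sim e^{cR^{4/3}\log R}$ between the inner and outer shells. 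Your Carleman inequality, written with a fixed constant $C$ on $B_{4R}\setminus B_{1/4}$, cannot hold uniformly in $R$; one must either rescale to a fixed ball first or track the $R$-dependence of the constants, and this is exactly where the $4/3$ exponent enters.
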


 This was derived from the following Carleman type estimate.
\begin{lemma}\label{Bourgain_Kenig}
	There are constants $C_1, C_2, C_3,$ depending only on $n$ and an increasing function $w=w(r)$ for $0<r<10$ such that
	\begin{equation*}
		\frac{1}{C_1}< \frac{w(r)}{r}<C_1
	\end{equation*}
	and for all $f\in C_0^\infty(B_{10}\setminus \{0\}),\, \alpha > C_2,$ we have
	\begin{equation*}
		\alpha^3 \int_{\R^d} w^{-1-2\alpha} f^2 \leq C_3 \int_{\R^d} w^{2-2\alpha} (\Delta f)^2.
	\end{equation*}
\end{lemma}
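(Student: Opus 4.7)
The plan is to prove this radial Carleman estimate by reducing to a family of $1$D weighted inequalities via spherical harmonics. First I would decompose $f(x)=\sum_{k}f_k(r)\,Y_k(\omega)$ in an orthonormal basis of spherical harmonics on $S^{n-1}$. Since the weight $w$ is radial, both sides of the inequality split into orthogonal sums, and the Laplacian acts on each mode as a Bessel-type operator $L_k f_k = f_k''+\tfrac{n-1}{r}f_k'-\tfrac{\lambda_k}{r^2}f_k$ with $\lambda_k=k(k+n-2)$. After the substitutions $f_k(r)=r^{-(n-1)/2}h_k(r)$ and $s=\log r$, each mode is reduced to a $1$D Carleman inequality for the Schr\"odinger-type operator $-\partial_s^{2}+c_k$, with weight $\phi(s)$ related to $\log w(e^{s})$ and potential $c_k=\lambda_k+(n-2)^{2}/4\geq 0$.

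For each fixed mode I would conjugate by the weight: set $g=e^{-\alpha\phi}h$ and expand the conjugated operator $P_\alpha=e^{-\alpha\phi}(-\partial_s^{2}+c_k)e^{\alpha\phi}$ as $P_\alpha=S+A$ with $S$ formally $L^{2}(ds)$-symmetric and $A$ antisymmetric. The H\"ormander identity
\begin{equation*}
\|P_\alpha g\|_{L^{2}}^{2}=\|Sg\|_{L^{2}}^{2}+\|Ag\|_{L^{2}}^{2}+\langle [S,A]g,g\rangle
\end{equation*}
reduces the matter to a positive commutator estimate of the shape $\langle [S,A]g,g\rangle\gtrsim \alpha^{3}\int(\phi')^{2}\phi''\,g^{2}\,ds$, modulo lower-order terms that can be absorbed for $\alpha$ large, with all constants uniform in the mode index $k$ (this uniformity is what controls the potential $c_k$ across all spherical harmonics).

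The main obstacle is the construction of the weight $w$ on $(0,10)$. The two-sided comparability $1/C_1<w(r)/r<C_1$ forces $\phi(s)\sim s$, so $\phi''$ is nearly zero and naive convexity fails; at the same time one must dominate the nonnegative potential $c_k$ uniformly in $k$. Bourgain and Kenig's insight, which I would follow, is to build $w$ as a judicious perturbation of $r$ so that the induced weight $\phi$, while remaining comparable to $s$, carries enough pseudo-convexity on the relevant dyadic scales to make the commutator strictly positive; the three factors of $\alpha$ come from the $\phi'\cdot\phi''\cdot\phi'$ structure in the principal commutator. Once the mode-by-mode inequality is established, summing in $k$ and undoing the changes of variable $s=\log r$ and $f_k=r^{-(n-1)/2}h_k$ delivers the claimed estimate on $\mathbb{R}^{n}$. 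The delicate step is really the design of $w$ and the book-keeping needed to absorb every $k$-dependent error into the $\alpha^{3}$ gain.
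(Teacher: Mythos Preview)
The paper does not contain a proof of this lemma. It is quoted verbatim from Bourgain and Kenig~\cite{B_K} (Lemma~3.10 there) purely as motivation: the authors recall it to explain where the idea of establishing lower bounds for solutions in annular regions via Carleman estimates originates, and then they move on to prove their own Carleman estimate (Lemma~\ref{Carleman1}) for the operator $\partial_t+\partial_x^3+\partial_y^3$, which is what they actually need. So there is nothing in the paper to compare your proposal against.

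That said, your outline is a reasonable sketch of how the Bourgain--Kenig argument is organized: the spherical-harmonic reduction to a family of one-dimensional problems, the conjugation/commutator machinery, and the identification of the weight construction as the crux are all correct. What you have written, however, is not a proof but a description of where the difficulty lies. The sentence ``Bourgain and Kenig's insight, which I would follow, is to build $w$ as a judicious perturbation of $r$\ldots'' is precisely the point where the actual work happens, and you have not supplied it. In~\cite{B_K} the weight is given explicitly (roughly $w(r)=r\exp\big(-\int_0^r \frac{e^{-t}-1}{t}\,dt\big)$ or a close variant), and verifying that this choice yields the required lower bound on the commutator, uniformly in the spherical-harmonic index, is a concrete computation that your proposal defers entirely. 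If you intend to include a self-contained proof, you will need to specify $w$ and carry out that verification; otherwise, since the present paper only cites the result, a reference to~\cite{B_K} is all that is required here.
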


\medskip
In order to obtain the lower bound in Theorem~\ref{lower_bound}, in the same spirit as Bourgain and Kenig we will start performing a Carleman estimate for our operator
\begin{equation}\label{operator_difference}
	P=\partial_t + (\partial_x^3 + \partial_y^3) + a_1(x,y,t)(\partial_x + \partial_y) + a_0(x,y,t),
\end{equation}
where $a_0,a_1\in L^\infty(\R^3).$

As a starting point we will prove the following Carleman estimate for the leading part of the operator $P,$ namely $\partial_t + \partial_x^3 + \partial_y^3.$

\begin{lemma}\label{Carleman1}
	Assume that $\varphi\colon [0,1] \to \R$ is a smooth function. Then, there exist two constants $c>0$ and $M_1=M_1(\norm{\varphi'}_{\infty}, \norm{\varphi''}_{\infty})>0$ such that the inequality
	\begin{multline}\label{Carleman}
		\frac{\alpha^\frac{5}{2}}{R^3} \norm*{e^{\alpha \phi(x,y,t)} \phi(x,y,t)\, g}_\Lt + \frac{\alpha^\frac{3}{2}}{R^2} \norm{ e^{\alpha \phi(x,y,t)} \abs{\nabla g}}_\Lt\\
		\leq c \norm{ e^{\alpha \phi(x,y,t)}(\partial_t + \partial_x^3 + \partial_y^3)g}_\Lt
	\end{multline}
	holds, for $R\geq 1,$ $\alpha$ such that $\alpha^2\geq M_1 R^3,\, g \in C^\infty_0(\R^2 \times [0,1])$ supported in
	\begin{equation*}
		\Big\{ (x,y,t) \in \R^2 \times[0,1]\colon \abs*{\frac{\varkappa}{R} + \varphi(t) \xi}\geq 1  \Big\}
	\end{equation*}
	and $\phi(x,y,t)= \abs*{\frac{\varkappa}{R} + \varphi(t) \xi}^2=\big( \frac{x}{R} + \varphi(t) \big)^2 + \big( \frac{y}{R} + \varphi(t) \big)^2,$ with $\varkappa=(x,y)$ and $\xi=(1,1).$
\end{lemma}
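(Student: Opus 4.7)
The plan is to adopt the classical Hörmander approach: conjugate the operator by $e^{\alpha\phi}$, split the conjugated operator into its $L^2$-self-adjoint and skew-adjoint parts, and deduce the two stated bounds from the (positive) quadratic form associated with the commutator of the two parts.

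Set $f := e^{\alpha\phi}g$, so that $e^{\alpha\phi}(\partial_t+\partial_x^3+\partial_y^3)g = L_\phi f$ with
$$ L_\phi = \partial_t - \alpha\phi_t + (\partial_x - \alpha\phi_x)^3 + (\partial_y - \alpha\phi_y)^3. $$
The quadratic form of $\phi$ yields the key simplifications $\phi_{xx}=\phi_{yy}=2R^{-2}$, $\phi_{xy}=\phi_{xxx}=\phi_{yyy}=0$, together with the identity $\phi_x^2+\phi_y^2 = 4\phi/R^2$ that will be used decisively at the end. Expanding the cubes and collecting the $L^2$-symmetric and antisymmetric pieces leads to $L_\phi = \mathcal{S}+\mathcal{A}$, where
\begin{align*}
\mathcal{S} &= -\alpha\phi_t - 3\alpha(\phi_x\partial_x^2 + \phi_y\partial_y^2) - \tfrac{6\alpha}{R^2}(\partial_x+\partial_y) - \alpha^3(\phi_x^3+\phi_y^3),\\
\mathcal{A} &= \partial_t + \partial_x^3 + \partial_y^3 + 3\alpha^2(\phi_x^2\partial_x + \phi_y^2\partial_y) + \tfrac{6\alpha^2}{R^2}(\phi_x+\phi_y),
\end{align*}
and one verifies directly that $\mathcal{S}^*=\mathcal{S}$ and $\mathcal{A}^*=-\mathcal{A}$. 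Polarization (valid since $f$ has compact support) yields the fundamental identity
$$ \|L_\phi f\|_{L^2}^2 = \|\mathcal{S}f\|_{L^2}^2 + \|\mathcal{A}f\|_{L^2}^2 + \langle[\mathcal{S},\mathcal{A}]f,f\rangle. $$

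The heart of the argument is to show that the commutator quadratic form satisfies
$$ \langle[\mathcal{S},\mathcal{A}]f,f\rangle \geq c\Big(\tfrac{\alpha^5}{R^6}\|\phi f\|_{L^2}^2 + \tfrac{\alpha^3}{R^4}\|\nabla f\|_{L^2}^2\Big). $$
The dominant zeroth-order positive contribution arises from
$$ [-\alpha^3(\phi_x^3+\phi_y^3),\, 3\alpha^2(\phi_x^2\partial_x+\phi_y^2\partial_y)] = \tfrac{18\alpha^5}{R^2}(\phi_x^4+\phi_y^4), $$
which, combined with $\phi_x^4+\phi_y^4 \geq \tfrac12(\phi_x^2+\phi_y^2)^2 = 8\phi^2/R^4$, is a pure multiplication bounded below by $72\alpha^5\phi^2/R^6$. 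The dominant higher-order positive contribution comes from the principal commutator $[-3\alpha\phi_x\partial_x^2,\partial_x^3] = 18\alpha R^{-2}\partial_x^4$ (and its $y$-analogue), contributing $18\alpha R^{-2}(\|\partial_x^2 f\|^2 + \|\partial_y^2 f\|^2)$ to the quadratic form; interpolation $\|\nabla f\|^2 \leq \|f\|\|\Delta f\|$ together with $\phi \geq 1$ on the support of $f$ then converts this into the desired $\alpha^3 R^{-4}\|\nabla f\|^2$ term, the exchange cost being absorbed into the $\alpha^5 \phi^2/R^6$ lower bound by Young's inequality. All remaining commutator terms are of strictly lower scale in $\alpha/R$; the worst of them, coming from $[-\alpha\phi_t,\partial_t]=\alpha\phi_{tt}$ and related time-derivative terms, carry factors of $\|\varphi'\|_\infty$ and $\|\varphi''\|_\infty$. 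The support condition $|\varkappa/R+\varphi(t)\xi|\geq 1$ (equivalently $\phi\geq 1$) is used precisely to absorb such zeroth-order errors into the $\alpha^5\phi^2/R^6$ leading term, and the choice $\alpha^2 \geq M_1 R^3$ with $M_1 = M_1(\|\varphi'\|_\infty,\|\varphi''\|_\infty)$ large enough makes the absorption quantitative. Dropping the non-negative $\|\mathcal{S}f\|^2+\|\mathcal{A}f\|^2$, taking square roots and undoing the conjugation (via $e^{\alpha\phi}\nabla g = \nabla f - \alpha(\nabla\phi)f$ and $|\nabla\phi|\lesssim \sqrt{\phi}/R$, so the spurious term is absorbed into the $\|\phi f\|$-bound) yields~\eqref{Carleman}.

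The main obstacle is the bookkeeping of $[\mathcal{S},\mathcal{A}]$: many intermediate terms of orders $\alpha,\alpha^2,\alpha^3$ and $\alpha^4$ appear, and one has to check that their signs and sizes cooperate so that the two leading positive pieces above dominate everything else once $\alpha^2 \geq M_1 R^3$. The calibration of $M_1$ to absorb the $\varphi$-dependent contributions is the delicate quantitative step; the remaining cancellations are rigidly dictated by the quadratic form of the weight.
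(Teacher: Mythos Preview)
Your proposal is correct and follows essentially the same H\"ormander conjugation--commutator scheme as the paper: set $f=e^{\alpha\phi}g$, split $L_\phi=\mathcal{S}+\mathcal{A}$ (your $\mathcal{S},\mathcal{A}$ coincide with the paper's $S_\alpha,A_\alpha$ once one expands $\partial_x(\phi_x\partial_x\,\cdot)$ using $\phi_{xx}=2/R^2$), and exploit $\|L_\phi f\|^2\geq\langle[\mathcal{S},\mathcal{A}]f,f\rangle$. Your identification of the two dominant positive contributions, $\tfrac{18\alpha^5}{R^2}(\phi_x^4+\phi_y^4)$ and $\tfrac{18\alpha}{R^2}(\|\partial_x^2 f\|^2+\|\partial_y^2 f\|^2)$, matches the paper's explicit computation exactly, as does the role of the constraint $\alpha^2\geq M_1R^3$ in absorbing the $\varphi',\varphi''$-dependent errors.

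The one genuine bookkeeping difference is how the gradient term is extracted. The paper \emph{adds and subtracts} $\tfrac{\alpha^3}{R^4}\|\partial_x f-\alpha\phi_x f\|^2$ (which equals $\tfrac{\alpha^3}{R^4}\|e^{\alpha\phi}\partial_x g\|^2$) inside the commutator form and carries this term through the estimate, so the weighted gradient of $g$ appears directly with no unconjugation step. You instead aim for $\tfrac{\alpha^3}{R^4}\|\nabla f\|^2$ via the interpolation $\|\nabla f\|^2\leq\|f\|\,\|\Delta f\|$ and Young, then convert to $\|e^{\alpha\phi}\nabla g\|$ at the end using $|\nabla\phi|^2=4\phi/R^2$. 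Both routes are valid; the paper's has the minor advantage of never losing sight of the target quantity, while yours is slightly more modular. One caution: the paper's full computation shows that several error terms land at the \emph{same} scale $\alpha^5/R^6$ as the leading zeroth-order piece (not strictly lower), and the final numerical coefficient $(\tfrac{83}{2}-2\sqrt{2})$ emerges only after a careful balance---so your phrase ``strictly lower scale'' is a bit optimistic, though the absorption does go through once $M_1$ is fixed.
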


\begin{remark}\label{parameter_alpha}
	In order to obtain from the previous result an estimate involving the whole operator $P$ as defined in~\eqref{operator_difference}, a very essential role is played by the multiplicative parameter $\alpha$ on the left-hand side of~\eqref{Carleman}. Indeed by taking $\alpha$ sufficiently large, we can make the term on the left-hand side as large as we need in order to absorb potential lower order terms.

 This fact can be seen at work explicitly in the proof of Lemma~\ref{Carleman2} below, where a Carleman estimate for the whole operator $P=\partial_t + \partial_x^3 + \partial_y^3 + a_1(\partial_x + \partial_y) + a_0$ is given. Indeed by virtue of the assumption $\alpha^2\geq M_1 R^3$ in Lemma~\ref{Carleman1}, the terms $\frac{\alpha^\frac{5}{2}}{R^3}$ and $\frac{\alpha^\frac{3}{2}}{R^2}$ grow as a positive fractional power of $R,$ therefore, being $R$ a large parameter, it will allow us to include in the estimate lower order derivatives.
\end{remark}

\begin{proof}
		From now on with an abuse of notation we will write $L^2$ instead of $L^2(\R^2 \times [0,1]).$ 
		
	 Because of the difficulty to prove an exponentially weighted estimate, as usual in this context, we reduce ourselves into proving an estimate for the conjugated operator
		\begin{equation*}
			e^{\alpha \phi(x,y,t)}(\partial_t + \partial_x^3 + \partial_y^3) e^{-\alpha \phi(x,y,t)}.
		\end{equation*}
		The main point in the proof is, roughly speaking, a ``positive commutator argument'' that will give a lower bound for the conjugated operator $e^{\alpha \phi} (\partial_t + \partial_x^3 + \partial_y^3)e^{-\alpha \phi}$ once it is decomposed as a sum of its symmetric and skew-symmetric part.
		
		In order to do that we define $f= e^{\alpha \phi(x,y,t)} g,$ observe that 
		\begin{equation*}
			\begin{split}
				e^{2\alpha \phi} \abs{\nabla g}^2&= e^{2\alpha \phi} \big[( e^{-\alpha \phi} \partial_x f -\alpha \partial_x \phi e^{-\alpha \phi} f )^2 + ( e^{-\alpha \phi} \partial_y f-\alpha \partial_y \phi e^{-\alpha \phi} f )^2 \big]\\
																								  &= ( \partial_x f -\alpha \partial_x \phi f)^2 + (\partial_y f-\alpha \partial_y \phi f)^2.
			\end{split}
		\end{equation*}
		Using this identity it is sufficient to prove
		\begin{equation}\label{Carleman_f}
			c\norm{e^{\alpha \phi}(\partial_t + \partial_x^3 + \partial_y^3) e^{-\alpha \phi} f}_{L^2} \geq \frac{\alpha^\frac{5}{2}}{R^3} \norm{\phi f}_{L^2} + \frac{\alpha^\frac{3}{2}}{R^2} \norm{\partial_x f -\alpha \partial_x \phi f}_{L^2} + \frac{\alpha^\frac{3}{2}}{R^2} \norm{\partial_y f -\alpha \partial_y \phi f}_{L^2}.
		\end{equation}
		A straightforward computation gives
		\begin{equation*}
			\begin{split}
				e^{\alpha \phi}(\partial_t + \partial_x^3 + \partial_y^3) e^{- \alpha \phi} f=& - \alpha \partial_t \phi f + \partial_t f - \alpha \partial_x^3 \phi f + 3 \alpha^2 (\partial_x \phi)(\partial_x^2 \phi) f - \alpha^3(\partial_x \phi)^3 f\\
																																													& -3 \alpha \partial_x^2 \phi \partial_x f + 3 \alpha^2 (\partial_x \phi)^2 \partial_x f - 3\alpha \partial_x\phi \partial_x^2f + \partial_x^3 f\\
																																													& -\alpha \partial_y^3\phi f + 3\alpha^2(\partial_y \phi)(\partial_y^2\phi) f -\alpha^3(\partial_y\phi)^3 f -3\alpha \partial_y^2\phi \partial_y f\\
																																													& +3\alpha^2(\partial_y\phi)^2 \partial_y f -3\alpha\partial_y\phi\partial_y^2 f + \partial_y^3 f.
			\end{split}
		\end{equation*}
		We can write this as
		\begin{equation*}
			e^{\alpha \phi}(\partial_t + \partial_x^3 + \partial_y^3) e^{- \alpha \phi} f= A_\alpha f + S_\alpha f,
		\end{equation*}
		where $A_\alpha$ and $S_\alpha$ are respectively skew-symmetric and symmetric operators given by
		\begin{align*}
			&A_\alpha:= \partial_t + \partial_x^3 + \partial_y^3 + 3\alpha^2(\partial_x\phi)^2 \partial_x + 3\alpha^2(\partial_y\phi)^2 \partial_y + 3\alpha^2 (\partial_x\phi)(\partial_x^2\phi) + 3\alpha^2 (\partial_y\phi)(\partial_y^2\phi)\\
			&S_\alpha:= -3\alpha \partial_x(\partial_x\phi \partial_x \cdot) -3\alpha \partial_y(\partial_y\phi \partial_y \cdot) + \big(-\alpha^3 (\partial_x \phi)^3 -\alpha \partial_x^3\phi\big) + \big(-\alpha^3 (\partial_y \phi)^3 -\alpha \partial_y^3\phi\big) -\alpha \partial_t\phi.
		\end{align*}
		
		Thus one gets
		\begin{equation*}
			\begin{split}
				\norm{e^{\alpha \phi(x,y,t)}(\partial_t + \partial_x^3 + \partial_y^3)e^{-\alpha \phi(x,y,t)} f}_{L^2}^2 &=\norm{(A_\alpha + S_\alpha)f}_{L^2}^2\\
																																																							&=\langle(A_\alpha + S_\alpha)f, (A_\alpha + S_\alpha)f \rangle\\
																																																							&=\norm{A_\alpha f}_{L^2}^2 + \norm{S_\alpha f}_{L^2}^2 + \langle A_\alpha f, S_\alpha f \rangle + \langle S_\alpha f, A_\alpha f \rangle\\
																																																							&\geq \langle[S_\alpha, A_\alpha]f, f\rangle.
			\end{split}
		\end{equation*}
		\begin{remark}
			From now on, to save space, we abbreviate $\int:=\iiint_{\R^2 \times [0,1]}$ and omit the arguments of integrated functions. 
		\end{remark}
		
		Now we choose
		\begin{equation}\label{choice_phi}
			\phi(x,y,t)= \abs*{\frac{\varkappa}{R} + \varphi(t) \xi}^2=\Big( \frac{x}{R} + \varphi(t) \Big)^2 + \Big( \frac{y}{R} + \varphi(t) \Big)^2, 
		\end{equation}
		where $\varkappa=(x,y)$ and $\xi=(1,1).$
		
		First choosing~\eqref{choice_phi} and adding and subtracting the terms $\frac{\alpha^3}{R^4} \norm{\partial_x f -\alpha \partial_x \phi f}_{L^2}^2 + \frac{\alpha^3}{R^4} \norm{\partial_y f -\alpha \partial_y \phi f}_{L^2}^2,$ we get
		\begin{align}
			\label{eq1}\tag*{}
			\hspace{-0.8cm}\langle [S_\alpha, A_\alpha]f ,f \rangle=& \frac{18 \alpha}{R^2} \norm{\partial_x^2 f}_{L^2}^2 + \frac{18 \alpha}{R^2} \norm{\partial_y^2 f}_{L^2}^2\\
			\label{eq2}\tag*{$I_1 +I_1^\ast$}
																							 & -\frac{12 \alpha}{R} \int \varphi'(t) (\partial_x f)^2 -\frac{12 \alpha}{R} \int \varphi'(t) (\partial_y f)^2\\ 
			 \label{eq3}\tag*{}
																							 & + \frac{144 \alpha^3}{R^4} \int \Big( \frac{x}{R} + \varphi(t) \Big)^2 (\partial_x f)^2 + \frac{144 \alpha^3}{R^4} \int \Big( \frac{y}{R} + \varphi(t) \Big)^2 (\partial_y f)^2\\
			\label{eq4}\tag*{}
																							 & - \frac{24 \alpha^3}{R^6} \int f^2 + 2\alpha \int (\varphi'(t) f)^2 + 2\alpha \int  \Big( \frac{x}{R} + \varphi(t) \Big) \varphi''(t) f^2\\
	    \label{eq5}\tag*{}
																							 &- \frac{24 \alpha^3}{R^6} \int f^2 + 2\alpha \int (\varphi'(t) f)^2  + 2\alpha \int  \Big( \frac{y}{R} + \varphi(t) \Big) \varphi''(t) f^2\\
			\label{eq6}\tag*{}
																							 & + \frac{48 \alpha^3}{R^3} \int \Big( \frac{x}{R} + \varphi(t) \Big)^2 \varphi'(t) f^2 + \frac{288 \alpha^5}{R^6} \int \Big(\frac{x}{R} + \varphi(t) \Big)^4 f^2\\
			\label{eq7}\tag*{}
																							 & + \frac{48 \alpha^3}{R^3} \int \Big( \frac{y}{R} + \varphi(t) \Big)^2 \varphi'(t) f^2 + \frac{288 \alpha^5}{R^6} \int \Big(\frac{y}{R} + \varphi(t) \Big)^4 f^2\\
			\label{eq8}\tag*{}
																							 & + \frac{\alpha^3}{R^4} \norm{\partial_x f - \alpha \partial_x \phi f}_{L^2}^2 + \frac{\alpha^3}{R^4} \norm{\partial_y f - \alpha \partial_y \phi f}_{L^2}^2\\
		  \label{eq9}\tag*{$I_2 +I_2^\ast$}
																							 & - \frac{\alpha^3}{R^4} \norm{\partial_x f - \alpha \partial_x \phi f}_{L^2}^2 - \frac{\alpha^3}{R^4} \norm{\partial_y f - \alpha \partial_y \phi f}_{L^2}^2.
		\end{align}
		Let us consider~\eqref{eq2}, for
		\begin{equation*}
			\alpha^2 \geq \norm{\varphi'}_{\infty} R^3,
		\end{equation*}
		it follows that
		\begin{equation}\label{I_1I_1s}
			\begin{split}
			I_1 + I_1^\ast& \geq -\frac{12 \alpha}{R} \int \norm{\varphi'}_\infty (\partial_x f)^2 -\frac{12 \alpha}{R} \int \norm{\varphi'}_\infty (\partial_y f)^2\\
										& \geq - \frac{12 \alpha^3}{R^4} \int (\partial_x f)^2  - \frac{12 \alpha^3}{R^4} \int (\partial_y f)^2.
			\end{split}
		\end{equation}
		We compute~\eqref{eq9} using the explicit expression for $\partial_x \phi$ and $\partial_y \phi:$ 
		\begin{equation*}
			\begin{split}
				I_2 + I_2^\ast=& - \frac{\alpha^3}{R^4} \int (\partial_x f)^2 - \frac{4 \alpha^5}{R^6} \int \Big( \frac{x}{R} + \varphi(t) \Big)^2 f^2 + \frac{4 \alpha^4}{R^5} \int \Big( \frac{x}{R} + \varphi(t) \Big) f \partial_x f \\
											 &- \frac{\alpha^3}{R^4} \int (\partial_y f)^2 - \frac{4 \alpha^5}{R^6} \int \Big( \frac{y}{R} + \varphi(t) \Big)^2 f^2 + \frac{4 \alpha^4}{R^5} \int \Big( \frac{y}{R} + \varphi(t) \Big) f \partial_y f.
			\end{split}
		\end{equation*}
		Now let us just consider the last terms in the first and the second rows of the previous identity, using the classical Young inequality
		\begin{equation}\label{Young}
			a\, b \leq \frac{a^p}{p} + \frac{b^q}{q}, \qquad a,b>0, \qquad  \frac{1}{p} + \frac{1}{q}=1,
		\end{equation}
		we obtain
		\begin{equation*}
			\begin{split}
				\frac{4 \alpha^4}{R^5} \int \Big( \frac{x}{R} + \varphi(t) \Big) f \partial_x f+ \frac{4 \alpha^4}{R^5} \int \Big( \frac{y}{R} + \varphi(t) \Big) f \partial_y f 	&\geq - \frac{4 \alpha^4}{R^5} \int \abs*{\frac{x}{R} + \varphi(t)} \abs{f} \abs{\partial_x f} - \frac{4 \alpha^4}{R^5} \int \abs*{ \frac{y}{R} + \varphi(t)} \abs{f} \abs{\partial_y f}\\
				&\geq -\frac{2\alpha^5}{R^6} \int \Big(\frac{x}{R} + \varphi(t)\Big)^2 f^2 - \frac{2\alpha^3}{R^4} \int (\partial_x f)^2\\
				&\phantom{~\geq} -\frac{2\alpha^5}{R^6} \int \Big(\frac{y}{R} + \varphi(t)\Big)^2 f^2 - \frac{2\alpha^3}{R^4} \int (\partial_y f)^2.
				\end{split}
			\end{equation*}
		Since $\abs*{\frac{\varkappa}{R} + \varphi(t) \xi}\geq 1,$ then one obtains
		\begin{equation*}
			\begin{split}
				I_2 + I_2^\ast&\geq -\frac{6\alpha^5}{R^6} \int \abs*{\frac{\varkappa}{R} + \varphi(t)\xi}^2 f^2 - \frac{3\alpha^3}{R^4} \int (\partial_x f)^2 - \frac{3\alpha^3}{R^4} \int (\partial_y f)^2\\
											&\geq  -\frac{6\alpha^5}{R^6} \int \abs*{\frac{\varkappa}{R} + \varphi(t)\xi}^4 f^2 - \frac{3\alpha^3}{R^4} \int (\partial_x f)^2 - \frac{3\alpha^3}{R^4} \int (\partial_y f)^2.
			\end{split}
		\end{equation*}
		Gathering altogether we get
		\begin{align}
			\label{eq1.1}\tag*{$I_1 + I_1^\ast$}
			\hspace{-0.5cm}\langle [S_\alpha, A_\alpha]f ,f \rangle=& \frac{18 \alpha}{R^2} \norm{\partial_x^2 f}_{L^2}^2 + \frac{18 \alpha}{R^2} \norm{\partial_y^2 f}_{L^2}^2\\
			\label{eq2.1}\tag*{$I_2 +I_2^\ast$}
																							 & -\frac{15 \alpha^3}{R^4} \int (\partial_x f)^2 -\frac{15 \alpha^3}{R^4} \int (\partial_y f)^2\\ 
			 \label{eq3.1}\tag*{}
																							 & + \frac{144 \alpha^3}{R^4} \int \Big( \frac{x}{R} + \varphi(t) \Big)^2 (\partial_x f)^2 + \frac{144 \alpha^3}{R^4} \int \Big( \frac{y}{R} + \varphi(t) \Big)^2 (\partial_y f)^2\\
			\label{eq4.1}\tag*{}
																							 & - \frac{24 \alpha^3}{R^6} \int f^2 + 2\alpha \int (\varphi'(t) f)^2 + 2\alpha \int  \Big( \frac{x}{R} + \varphi(t) \Big) \varphi''(t) f^2\\
	    \label{eq5.1}\tag*{}
																							 &- \frac{24 \alpha^3}{R^6} \int f^2 + 2\alpha \int (\varphi'(t) f)^2  + 2\alpha \int  \Big( \frac{y}{R} + \varphi(t) \Big) \varphi''(t) f^2\\
			\label{eq6.1}\tag*{}
																							 & + \frac{48 \alpha^3}{R^3} \int \Big( \frac{x}{R} + \varphi(t) \Big)^2 \varphi'(t) f^2 + \frac{48 \alpha^3}{R^3} \int \Big( \frac{y}{R} + \varphi(t) \Big)^2 \varphi'(t) f^2  \\
			\label{eq7.1}\tag*{$I_3+ I_3^\ast$}
																							 & + \frac{288 \alpha^5}{R^6} \int \Big(\frac{x}{R} + \varphi(t) \Big)^4 f^2 + \frac{288 \alpha^5}{R^6} \int \Big(\frac{y}{R} + \varphi(t) \Big)^4 f^2\\
			\label{eq8.1}\tag*{}
																							 & + \frac{\alpha^3}{R^4} \norm{\partial_x f - \alpha \partial_x \phi f}_{L^2}^2 + \frac{\alpha^3}{R^4} \norm{\partial_y f - \alpha \partial_y \phi f}_{L^2}^2\\
		  \label{eq9.1}\tag*{}
																							 & -\frac{6 \alpha^5}{R^6} \int \abs*{\frac{\varkappa}{R} + \varphi(t) \xi}^4 f^2.
		\end{align}
	We consider~\eqref{eq2.1}, using again that $\abs*{\frac{\varkappa}{R} + \varphi(t) \xi}\geq 1,$ we obtain
		\begin{equation*}
			\begin{split}
				I_2 + I_2^\ast\geq& - \frac{15 \alpha^3}{R^4} \int \Big( \frac{x}{R} + \varphi(t) \Big)^2 (\partial_x f)^2 \underbrace{-\frac{15 \alpha^3}{R^4} \int \Big( \frac{y}{R} + \varphi(t) \Big)^2 (\partial_x f)^2}_{{}=:I}\\
													& \underbrace{- \frac{15 \alpha^3}{R^4} \int \Big( \frac{x}{R} + \varphi(t) \Big)^2 (\partial_y f)^2}_{{}=: II} - \frac{15 \alpha^3}{R^4} \int \Big( \frac{y}{R} + \varphi(t) \Big)^2 (\partial_y f)^2.
			\end{split}
		\end{equation*}
	First let us observe that, making use of integration by parts, $I$ can be re-written as
	\begin{equation}\label{Iterm}
		I= + \frac{15 \alpha^3}{R^4} \int \Big( \frac{y}{R} + \varphi(t) \Big)^2 f \partial_x^2 f.
	\end{equation}
	Using~\eqref{Iterm}, observing that $18=\frac{47}{4} + \frac{25}{4}$ and that $288=9 + 279$ we have
	\begin{equation*}
		\begin{split}
			I_1+I+ I_3^\ast
			 &=\int\Big( \frac{5}{2} \frac{\alpha^\frac{1}{2}}{R} \partial_x^2 f \Big)^2 + 2 \int \frac{15}{2} \frac{\alpha^3}{R^4} \Big( \frac{y}{R} + \varphi(t) \Big)^2 f \partial_x^2 f 
			+ \int \Big[ 3 \frac{\alpha^\frac{5}{2}}{R^3} \Big(\frac{y}{R} + \varphi(t) \Big)^2 f \Big]^2\\
											&\quad + \frac{47}{4}\frac{\alpha}{R^2}\int (\partial_x^2 f)^2 + 279\frac{\alpha^5}{R^6} \int \Big( \frac{y}{R} + \varphi(t) \Big)^4 f^2\\
											&=\int \Big[\frac{5}{2} \frac{\alpha^\frac{1}{2}}{R} \partial_x^2 f + 3 \frac{\alpha^\frac{5}{2}}{R^3} \Big( \frac{y}{R} + \varphi(t) \Big)^2 f \Big]^2 + \frac{47}{4}\frac{\alpha}{R^2}\int (\partial_x^2 f)^2 \\
											&\quad + 279\frac{\alpha^5}{R^6} \int \Big( \frac{y}{R} + \varphi(t) \Big)^4 f^2. 
		\end{split}
	\end{equation*}
	Proceeding in the same way for $I_1^\ast + II + I_3$ we get
	\begin{equation*}
		I_1^\ast + II + I_3\geq \int \Big[\frac{5}{2} \frac{\alpha^\frac{1}{2}}{R} \partial_y^2 f + 3 \frac{\alpha^\frac{5}{2}}{R^3} \Big( \frac{x}{R} + \varphi(t) \Big)^2 f \Big]^2 + \frac{47}{4}\frac{\alpha}{R^2}\int (\partial_y^2 f)^2 + 279\frac{\alpha^5}{R^6} \int \Big( \frac{x}{R} + \varphi(t) \Big)^4 f^2.  
	\end{equation*}
	Summing up, neglecting the two squares of binomial, that clearly are non negative, one has
	\begin{align}
			\label{eq1.2}\tag*{}
			\langle [S_\alpha, A_\alpha]f ,f \rangle=& \frac{47}{4}\frac{\alpha}{R^2} \norm{\partial_x^2 f}_{L^2}^2 + \frac{47}{4}\frac{\alpha}{R^2} \norm{\partial_y^2 f}_{L^2}^2\\
			\label{eq2.2}\tag*{}
																							 & +\frac{129 \alpha^3}{R^4} \int \Big(\frac{x}{R} + \varphi(t) \Big)^2 (\partial_x f)^2 +\frac{129 \alpha^3}{R^4} \int \Big(\frac{y}{R} + \varphi(t) \Big)^2  (\partial_y f)^2\\ 
			\label{eq3.2}\tag*{$I_1 + I_2 + I_3$}
																							 & - \frac{24 \alpha^3}{R^6} \int f^2 + 2\alpha \int (\varphi'(t) f)^2 + 2\alpha \int  \Big( \frac{x}{R} + \varphi(t) \Big) \varphi''(t) f^2\\
	    \label{eq4.2}\tag*{$I_1^\ast + I_2^\ast + I_3^\ast$}
																							 &- \frac{24 \alpha^3}{R^6} \int f^2 + 2\alpha \int (\varphi'(t) f)^2  + 2\alpha \int  \Big( \frac{y}{R} + \varphi(t) \Big) \varphi''(t) f^2\\
			\label{eq5.2}\tag*{$I_4+ I_5$}
																							 & + \frac{48 \alpha^3}{R^3} \int \Big( \frac{x}{R} + \varphi(t) \Big)^2 \varphi'(t) f^2 + \frac{279 \alpha^5}{R^6} \int \Big(\frac{x}{R} + \varphi(t) \Big)^4 f^2 \\
			\label{eq6.2}\tag*{$I_4^\ast+ I_5^\ast$}
																							 & + \frac{48 \alpha^3}{R^3} \int \Big( \frac{y}{R} + \varphi(t) \Big)^2 \varphi'(t) f^2  + \frac{279 \alpha^5}{R^6} \int \Big(\frac{y}{R} + \varphi(t) \Big)^4 f^2\\
			\label{eq7.2}\tag*{}
																							 & + \frac{\alpha^3}{R^4} \norm{\partial_x f - \alpha \partial_x \phi f}_{L^2}^2 + \frac{\alpha^3}{R^4} \norm{\partial_y f - \alpha \partial_y \phi f}_{L^2}^2\\
		  \label{eq8.2}\tag*{}
																							 & -\frac{6 \alpha^5}{R^6} \int \abs*{\frac{\varkappa}{R} + \varphi(t) \xi}^4 f^2.
		\end{align}
		Now we compute $I_2+ I_2^\ast + I_4+ I_4^\ast + I_5+ I_5^\ast,$ using that $(a^2 + b^2)\geq \frac{1}{2}(a+b)^2$ for all $a,b>0$ and that $\frac{279}{2}= 144 - \frac{9}{2}$ we have
		\begin{equation*}
			\begin{split}
			I_2+ I_2^\ast + I_4+ I_4^\ast + I_5+ I_5^\ast&\geq 4\alpha \int (\varphi'(t)f)^2 + \frac{48\alpha^3}{R^3} \int \abs*{\frac{\varkappa}{R} + \varphi(t) \xi}^2 \varphi'(t) f^2 + \frac{279}{2} \frac{\alpha^5}{R^6} \int \abs*{\frac{\varkappa}{R} + \varphi(t)\xi}^4 f^2\\
																									 & =\int \Big( 2 \alpha^\frac{1}{2} \varphi'(t) + 12 \frac{\alpha^\frac{5}{2}}{R^3} \abs*{\frac{\varkappa}{R} + \varphi(t)\xi}^2 \Big)^2 f^2 - \frac{9}{2} \frac{\alpha^5}{R^6} \int \abs*{\frac{\varkappa}{R} + \varphi(t)\xi}^4 f^2.
			\end{split}
		\end{equation*}
		Since we are assuming $\alpha^2\geq \norm{\varphi'}_\infty R^3$ and since $\abs*{\frac{\varkappa}{R} + \varphi(t)\xi}\geq 1,$ therefore
		\begin{equation*}
			2\alpha^\frac{1}{2} \varphi'(t)\geq -\frac{2 \alpha^\frac{5}{2}}{R^3} \geq -\frac{2 \alpha^\frac{5}{2}}{R^3} \abs*{\frac{\varkappa}{R} + \varphi(t)\xi}^2.
		\end{equation*}
		This gives
		\begin{equation*}
			I_2+ I_2^\ast + I_4+ I_4^\ast + I_5+ I_5^\ast\geq \frac{100\alpha^5}{R^6} \int \abs*{\frac{\varkappa}{R} + \varphi(t)\xi}^4 f^2 - \frac{9}{2} \frac{\alpha^5}{R^6} \int \abs*{\frac{\varkappa}{R} + \varphi(t) \xi}^4 f^2.
		\end{equation*}
		With regards to $I_3+I_3^\ast,$ assuming
		\begin{equation*}
			\alpha^2 \geq \norm{\varphi''}_\infty^\frac{1}{2} R^3,
		\end{equation*}
		and recalling that $\abs{\frac{\varkappa}{R}+ \varphi(t)\xi}\geq 1,$ we have
		\begin{equation*}
			\begin{split}
			I_3+ I_3^\ast&\geq -2\alpha \int \Big[\abs*{\frac{x}{R} + \varphi(t)} + \abs*{\frac{y}{R} + \varphi(t)}\Big] \norm{\varphi''}_\infty f^2\geq -\frac{2\sqrt{2} \alpha^5}{R^6} \int \abs*{\frac{\varkappa}{R} + \varphi(t)\xi} f^2\\
			&\geq - \frac{2\sqrt{2} \alpha^5}{R^6} \int \abs*{\frac{\varkappa}{R} + \varphi(t)\xi}^4 f^2. 
			\end{split}
		\end{equation*}
		Moreover
		\begin{equation*}
			I_1+ I_1^\ast= -\frac{48 \alpha^3}{R^6} \int f^2\geq -\frac{48 \alpha^3}{R^6} \int \abs*{\frac{\varkappa}{R} + \varphi(t)\xi}^4  f^2. 
		\end{equation*}
		Putting everything together and neglecting positive terms, we obtain the following estimate for the quantity $\langle [S_\alpha, A_\alpha]f, f\rangle:$
		\begin{equation*}
			\begin{split}
				\langle [S_\alpha, A_\alpha]f, f\rangle &\geq \frac{47}{4} \frac{\alpha}{R^2} \int (\partial_x^2 f)^2 + \frac{47}{4} \int (\partial_y^2 f)^2\\
&\phantom{~\geq} + \frac{129 \alpha^3}{R^4} \int \Big( \frac{x}{R} + \varphi(t) \Big)^2(\partial_x f)^2 + \frac{129 \alpha^3}{R^4} \int \Big( \frac{y}{R} + \varphi(t) \Big)^2(\partial_y f)^2\\
&\phantom{~\geq} +\big(100-\frac{9}{2} -2\sqrt{2} -48-6\big) \int \abs*{\frac{\varkappa}{R} + \varphi(t)\xi}^4 f^2\\
&\phantom{~\geq} + \frac{\alpha^3}{R^4} \norm{\partial_x f - \alpha \partial_x \phi f}_{L^2}^2 + \frac{\alpha^3}{R^4} \norm{\partial_y f - \alpha \partial_y \phi f}_{L^2}^2\\
&\geq \big( \frac{83}{2} -2\sqrt{2} \big) \frac{\alpha^5}{R^6} \int \abs*{\frac{\varkappa}{R} + \varphi(t)\xi}^4 f^2\\
&\phantom{~\geq} + \frac{\alpha^3}{R^4} \norm{\partial_x f - \alpha \partial_x \phi f}_{L^2}^2 + \frac{\alpha^3}{R^4} \norm{\partial_y f - \alpha \partial_y \phi f}_{L^2}^2. 
			\end{split}
		\end{equation*}
		Gathering the above information we conclude that
		\begin{multline*}
			\norm{e^{\alpha \phi(x,y,t)}(\partial_t + \partial_x^3 + \partial_y^3)e^{-\alpha \phi(x,y,t)} f}_{L^2}^2\\
			\geq \frac{\alpha^5}{R^6} \int \abs*{\frac{\varkappa}{R} + \varphi(t)\xi}^4 f^2 + \frac{\alpha^3}{R^4} \norm{\partial_x f - \alpha \partial_x \phi f}_{L^2}^2 + \frac{\alpha^3}{R^4} \norm{\partial_y f - \alpha \partial_y \phi f}_{L^2}^2
		\end{multline*}
		holds. Then a straightforward computation shows that this easily gives~\eqref{Carleman} in terms of $g$ with $c=\sqrt{3}.$
	\end{proof}

\medskip
Next, we shall extend the result in Lemma~\ref{Carleman1} to operators of the form~\eqref{operator_difference}. 

	\begin{lemma}\label{Carleman2}
	Assume that $\varphi\colon [0,1] \to \R$ is a smooth function. Then there exist $c>0,$ $R_0= R_0(\norm{\varphi'}_{\infty},\norm{\varphi''}_{\infty},$ $\norm{a_0}_{\infty},\norm{a_1}_{\infty})>1$ and $M_1=M_1(\norm{\varphi'}_\infty,\norm{\varphi''}_{\infty})>0$ such that the inequality 
	\begin{multline}\label{Carleman_bis}
		\frac{\alpha^\frac{5}{2}}{R^3} \norm*{e^{\alpha \phi(x,y,t)} \phi(x,y,t)\, g}_\Lt + \frac{\alpha^\frac{3}{2}}{R^2} \norm{ e^{\alpha \phi(x,y,t)} \abs{\nabla g}}_\Lt\\
		\leq c \norm{ e^{\alpha \phi(x,y,t)}(\partial_t + \partial_x^3 + \partial_y^3 + a_1(x,y,t)(\partial_x + \partial_y) + a_0(x,y,t))g}_\Lt
	\end{multline}
	holds for $R\geq R_0,$ $\alpha$ such that $\alpha^2\geq M_1 R^3,\, g \in C^\infty_0(\R^2 \times [0,1])$ supported in
	\begin{equation*}
		\Big\{ (x,y,t) \in \R^2 \times[0,1]\colon \abs*{\frac{\varkappa}{R} + \varphi(t) \xi}\geq 1  \Big\}
	\end{equation*}
	and $\phi(x,y,t)= \abs*{\frac{\varkappa}{R} + \varphi(t) \xi}^2=\big( \frac{x}{R} + \varphi(t) \big)^2 + \big( \frac{y}{R} + \varphi(t) \big)^2,$ with $\varkappa=(x,y)$ and $\xi=(1,1).$ 
	\end{lemma}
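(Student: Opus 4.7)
The plan is a direct perturbation argument on top of Lemma~\ref{Carleman1}. Write the operator as $P = P_0 + R_1$, where $P_0 := \partial_t + \partial_x^3 + \partial_y^3$ is the principal part already controlled by Lemma~\ref{Carleman1} and $R_1 := a_1(x,y,t)(\partial_x + \partial_y) + a_0(x,y,t)$ collects the lower-order perturbations. Applying Lemma~\ref{Carleman1} to $g$ bounds the desired left-hand side of~\eqref{Carleman_bis} by $c\,\|e^{\alpha\phi}P_0 g\|_{L^2}$. Since $P_0 g = Pg - R_1 g$, the triangle inequality yields
\[
\|e^{\alpha\phi} P_0 g\|_{L^2} \leq \|e^{\alpha\phi} P g\|_{L^2} + \|e^{\alpha\phi} R_1 g\|_{L^2},
\]
so the whole task reduces to showing that $\|e^{\alpha\phi} R_1 g\|_{L^2}$ can be absorbed into the left-hand side of~\eqref{Carleman_bis}.

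The perturbation is controlled pointwise: using $|(\partial_x + \partial_y)g| \leq \sqrt{2}\,|\nabla g|$ and, since $g$ is supported where $|\varkappa/R + \varphi(t)\xi| \geq 1$, i.e.\ $\phi \geq 1$, the elementary bound $|g| \leq \phi\,|g|$, one obtains
\[
\|e^{\alpha\phi} R_1 g\|_{L^2} \leq \sqrt{2}\,\|a_1\|_{\infty}\,\|e^{\alpha\phi}|\nabla g|\|_{L^2} + \|a_0\|_{\infty}\,\|e^{\alpha\phi}\phi g\|_{L^2}.
\]
Now the gain anticipated in Remark~\ref{parameter_alpha} enters. Under the constraint $\alpha^2 \geq M_1 R^3$, the prefactors on the left-hand side of~\eqref{Carleman_bis} satisfy
\[
\frac{\alpha^{5/2}}{R^3} \geq M_1^{5/4} R^{3/4}, \qquad \frac{\alpha^{3/2}}{R^2} \geq M_1^{3/4} R^{1/4},
\]
both of which diverge as $R \to \infty$. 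Choosing $R_0 = R_0(\|\varphi'\|_{\infty},\|\varphi''\|_{\infty},\|a_0\|_{\infty},\|a_1\|_{\infty})$ large enough so that for every $R \geq R_0$ and every admissible $\alpha$ one has $\alpha^{5/2}/R^3 \geq 2c\|a_0\|_{\infty}$ and $\alpha^{3/2}/R^2 \geq 2\sqrt{2}\,c\,\|a_1\|_{\infty}$, the two perturbation terms are absorbed at the cost of doubling the constant $c$, producing the desired inequality~\eqref{Carleman_bis}.

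No step here is genuinely delicate: the whole point of Lemma~\ref{Carleman1} is that it produces positive $R$-powers in front of the zeroth-order and first-order terms on the left-hand side, precisely in order to swallow lower-order contributions such as $R_1 g$. The only non-formal check is that $\phi \geq 1$ on $\supp g$, so that the zeroth-order term $\|e^{\alpha\phi} g\|_{L^2}$ is dominated by $\|e^{\alpha\phi}\phi g\|_{L^2}$; this is immediate from the support condition imposed on $g$. Hence $M_1$ remains the same constant as in Lemma~\ref{Carleman1}, while $R_0$ inherits the additional dependence on $\|a_0\|_{\infty}$ and $\|a_1\|_{\infty}$ dictated by the absorption step.
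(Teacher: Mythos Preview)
Your proof is correct and follows essentially the same route as the paper: apply Lemma~\ref{Carleman1}, add and subtract the lower-order terms, bound the perturbation via $|(\partial_x+\partial_y)g|\le\sqrt{2}\,|\nabla g|$ and $\phi\ge 1$ on $\supp g$, and absorb using that $\alpha^{5/2}/R^3$ and $\alpha^{3/2}/R^2$ grow like positive powers of $R$ under the constraint $\alpha^2\ge M_1 R^3$. The only cosmetic difference is that you make the absorption thresholds explicit, whereas the paper just says ``for $R$ sufficiently large''.
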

	\begin{proof}
		
		From the estimate~\eqref{Carleman} of Lemma~\ref{Carleman1}, adding and subtracting the lower order terms, it follows that
		\begin{equation}\label{absorption}
			\begin{split}
				\frac{\alpha^\frac{5}{2}}{R^3} \norm{e^{\alpha \phi} \phi g}_{L^2} + \frac{\alpha^\frac{3}{2}}{R^2} \norm{e^{\alpha \phi} \abs{\nabla g}}_{L^2}
				&\leq c \norm{e^{\alpha \phi}(\partial_t + \partial_x^3 + \partial_y^3)g}_{L^2}\\
														&\leq c \norm{e^{\alpha \phi} (\partial_t + \partial_x^3 + \partial_y^3 + a_1 (\partial_x + \partial_y) + a_0)g}_{L^2} + c \norm{e^{\alpha \phi}(a_1(\partial_x + \partial_y) + a_0)g}_{L^2}\\
														&\leq c \norm{e^{\alpha \phi} (\partial_t + \partial_x^3 + \partial_y^3 + a_1 (\partial_x + \partial_y) + a_0)g}_{L^2} + \sqrt{2}c \norm{e^{\alpha \phi} \abs{\nabla g}}_{L^2} \norm{a_1}_{L^\infty}\\
														&\quad + c \norm{e^{\alpha\phi} \phi g}_{L^2} \norm{a_0}_{L^\infty},
			\end{split}
		\end{equation}
		where the last inequality follows from the assumption $\abs*{\frac{\varkappa}{R} + \varphi(t)\xi}\geq 1.$
		
		Under our hypothesis $\alpha^2 \geq M_1 R^{\,3},$ the ratios $\frac{\alpha^\frac{3}{2}}{R^2}$ and then $\frac{\alpha^\frac{5}{2}}{R^2}$ on the left-hand side grow as a positive fractional power of $R,$ therefore, being the last two terms on the right-hand side finite because of the strong assumption about $g,$ these can be absorbed on the left-hand side assuming $R$ to be sufficiently large. 
		
		This yields the desired result. 
			\end{proof}

		\begin{remark}
			Notice that our hypothesis $\alpha^2\geq M_1 R^{\,3}$ turns out to be fundamental to make the term $\frac{\alpha^\frac{3}{2}}{R^2}$ growing as a positive fractional power of $R$ in order to absorb $\norm{e^{\alpha\phi}\abs{\nabla g}}_{L^2}$ in the left-hand side of~\eqref{absorption} and obtain~\eqref{Carleman_bis}. We recall that the term $\norm{e^{\alpha\phi}\abs{\nabla g}}_{L^2}$ comes from the fact that we want to obtain a Carleman estimate for the operator $P=\partial_t + \partial_x^3 + \partial_y^3 + a_1(\partial_x +\partial_y) + a_0$ which involves first order derivatives. If instead we consider an operator of this form $\partial_t + \partial_x^3 + \partial_y^3 + a(x,y,t),$	namely an operator in which the first derivatives do not appear, it would be sufficient to assume $\alpha^4 \geq M_1 R^{\,5}$ in order to guarantee that $\frac{\alpha^\frac{5}{2}}{R^3}$ grows as a fractional power of $R$ and hence to get from estimate~\eqref{Carleman} a Carleman estimate for this operator. In particular, this means that the form of the operator plays the fundamental role in the choice of the decay necessary in order to obtain a unique continuation result. To be more precise, considering the differential equation
			\begin{equation*}
				\partial_t u + \partial_x^3 u +\partial_y^3 u + a(x,y,t)u=0,
			\end{equation*}
			in this case it would be sufficient to require weaker hypothesis about the decay of the solution than the ones in Theorem~\ref{main_result_s}, namely $u_1(0)-u_2(0);\, u_1(1)-u_2(1) \in L^2(e^{a\abs{x + y}^\frac{5}{4}}\, dx\, dy).$ 
		
		\medskip
		Several evidences of the strict link between the decay assumption necessary to get unique continuation results and the form of the operator one is dealing with can be found in literature. In~\cite{Dawson}, Liana Dawson proved the following result concerning with unique continuation for equations in the KdV hierarchy. 
		\begin{theorem}\label{Dawson_result}
	Let $u_1, u_2$ two sufficiently smooth solutions of
	\begin{equation*}
		\partial_t u + \partial_x^5 u + 10u \partial_x^3 u + 20 \partial_x u \partial_x^2 u + 30 u^2 \partial_x u=0,\qquad (x,t)\in \R\times [0,1]. 
	\end{equation*}
	If there exists an $\varepsilon>0$ such that
	\begin{equation*}
		u_1(0)-u_2(0), u_1(1)-u_2(1)\in H^2(e^{a x_+^{4/3 + \varepsilon}} \, dx)
	\end{equation*}
	for $a>0$ sufficiently large,  then $u_1\equiv u_2.$
\end{theorem}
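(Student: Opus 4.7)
The strategy would parallel the scheme developed in this paper for Zakharov--Kuznetsov (and in \cite{E_K_P_V2} for KdV). Setting $v=u_1-u_2$ and subtracting the two equations, one checks directly that $v$ solves a linear fifth-order equation of the form
\begin{equation*}
\partial_t v + \partial_x^5 v + \sum_{k=0}^{3} b_k(x,t)\,\partial_x^k v = 0,
\end{equation*}
where the coefficients $b_k$ are polynomial expressions in $u_1,u_2$ and their $x$-derivatives up to order three. Under the smoothness hypothesis on $u_1,u_2$, these coefficients belong to $L^\infty$ and to suitable mixed-norm spaces, so Theorem~\ref{Dawson_result} reduces to a linear unique continuation property for this equation.

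The central ingredient is a Carleman estimate for the principal part $L_0:=\partial_t+\partial_x^5$, conjugated by a weight $e^{\alpha\phi(x,t)}$ with $\phi(x,t)=(x/R+\varphi(t))^2$ for a smooth $\varphi:[0,1]\to\R$. I would write
\begin{equation*}
e^{\alpha\phi}\, L_0\, e^{-\alpha\phi} = S_\alpha + A_\alpha,
\end{equation*}
with $S_\alpha$ symmetric and $A_\alpha$ skew-symmetric, and exploit the positive commutator inequality $\|(S_\alpha+A_\alpha)f\|_{L^2}^2 \geq \langle [S_\alpha,A_\alpha]f,f\rangle$ for $f=e^{\alpha\phi}g$ with $g\in C_0^\infty$ supported where $|x/R+\varphi(t)|\geq 1$. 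Expanding the commutator term by term, much as in the proof of Lemma~\ref{Carleman1} but with a longer list of contributions coming from the fifth-order derivative, one should obtain weighted $L^2$ bounds for $f,\partial_x f,\dots,\partial_x^4 f$, provided $\alpha$ satisfies a scaling relation of the form $\alpha^m \geq M_1 R^p$ tuned to the fifth-order structure. The resulting gain factors, positive fractional powers of $R$, are then used to absorb the perturbative terms $b_k\partial_x^k v$ and thereby produce a Carleman estimate for the full operator.

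Once such an estimate is in hand, the endgame is a contradiction argument. Assuming $v\not\equiv 0$ on some interior ball, applying the estimate to a cutoff of $v$ supported in an annular region $\{R-1\leq x_+\leq R\}\times[0,1]$ yields a lower bound of the form $\|v\|_{L^2(\mathrm{ann})} \geq c_0\, e^{-c_1 R^\gamma}$ for some exponent $\gamma$ determined by the weight and strictly less than $4/3+\varepsilon$. Simultaneously, propagating the weighted $H^2$ hypothesis at $t=0,1$ to intermediate times by energy estimates adapted to the fifth-order nonlinearity gives an upper bound $\|v\|_{L^2(\mathrm{ann})} \leq C\, e^{-a R^{4/3+\varepsilon}}$. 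For $a$ larger than a universal threshold, the two bounds are incompatible, forcing $v\equiv 0$. The main obstacle is the Carleman estimate itself: the fifth-order principal part produces a substantially richer commutator than the third-order case of Lemma~\ref{Carleman1}, and delicate bookkeeping of signs and powers of $\alpha/R$ is required to ensure that the leading positive contributions dominate and that an exponent $\gamma<4/3+\varepsilon$ actually emerges on the left-hand side. A secondary difficulty is controlling the cubic nonlinearity $30u^2\partial_x u$ in the weighted energy estimate, which requires Sobolev embeddings together with integration by parts against the weight.
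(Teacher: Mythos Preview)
Your outline is correct and follows exactly the strategy the paper sketches (which is itself a summary of Dawson's argument in~\cite{Dawson}): linearize to an equation with variable coefficients up to $\partial_x^3 v$, prove a Carleman estimate for $\partial_t+\partial_x^5$ with the quadratic weight $\phi=(x/R+\varphi(t))^2$ via the symmetric/skew-symmetric decomposition, absorb the lower-order terms, and close with the lower-bound/upper-bound contradiction. The one point you leave vague but the paper makes explicit is the scaling relation: it is $\alpha^3\geq M_1 R^{4+\varepsilon}$, forced by the requirement that the factor $\alpha^{3/2}/R^2$ multiplying $\|e^{\alpha\phi}\partial_x^3 g\|$ in the Carleman estimate grow as a positive power of $R$ in order to absorb the genuine third-order coefficient $b_3=10u_1$ (coming from $10u\,\partial_x^3 u$); taking $\alpha\sim R^{(4+\varepsilon)/3}$ then yields a lower bound with exponent $(4+\varepsilon)/3<4/3+\varepsilon$, which is exactly what produces the contradiction against the assumed decay.
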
 
		
The previous result comes out as a consequence of the analogous linear result for the equation with variable coefficients
\begin{equation*} 
	\partial_t v + \partial_x^5 v + a_4(x,t)\partial_x^4 v + a_3(x,t)\partial_x^3 v + a_2(x,t)\partial_x^2 v + a_1(x,t)\partial_x v + a_0(x,t) v=0,
\end{equation*}
or better, since it is always possible to eliminate the fourth order term by considering $w(x,t):= u(x,y) e^{\frac{1}{5} \int_0^x a_4(s,t)\, ds},$ for the equation
\begin{equation}\label{eq_daw2} 
	\partial_t v + \partial_x^5 v + a_3(x,t)\partial_x^3 v + a_2(x,t)\partial_x^2 v + a_1(x,t)\partial_x v + a_0(x,t) v=0.
\end{equation} 
In order to get the result, a Carleman estimate for the leading order terms of the operator, namely $\partial_t + \partial_x^5,$ was shown:
\begin{multline*}
	\frac{\alpha^\frac{1}{2}}{R} \norm*{e^{\alpha \big(\tfrac{x}{R} + \varphi(t)\big)^2} \partial_x^4 g}_{L^2} + \frac{\alpha^\frac{3}{2}}{R^2} \norm*{e^{\alpha \big(\tfrac{x}{R} + \varphi(t)\big)^2} \Big( \frac{x}{R} + \varphi(t)\Big)\partial_x^3 g}_{L^2} \\
	+ \frac{\alpha^\frac{5}{2}}{R^3} \norm*{e^{\alpha \big(\tfrac{x}{R} + \varphi(t)\big)^2} \Big( \frac{x}{R} + \varphi(t)\Big)^2\partial_x^2 g}_{L^2}
	+\frac{\alpha^\frac{7}{2}}{R^4} \norm*{e^{\alpha \big(\tfrac{x}{R} + \varphi(t)\big)^2} \Big( \frac{x}{R} + \varphi(t)\Big)^3\partial_x g}_{L^2}\\
	+ \frac{\alpha^\frac{9}{2}}{R^5} \norm*{e^{\alpha \big(\tfrac{x}{R} + \varphi(t)\big)^2} \Big( \frac{x}{R} + \varphi(t)\Big)^4 g}_{L^2}
	\leq c \norm{e^{\alpha \big(\tfrac{x}{R} + \varphi(t)\big)^2} (\partial_t + \partial_x^5) g}_{L^2}.
\end{multline*} 
As in our case, in order to obtain from this a Carleman estimate for the operator involving the lower order derivatives, that is $\partial_t + \partial_x^5 + a_3 \partial_x^3 + a_2 \partial_x^2 + a_1 \partial_x + a_0,$ an ``adding and subtracting argument'' is performed. To let this argument work we need to choose $\alpha$ in such a way the ratios $\frac{\alpha^\frac{3}{2}}{R^2}, \frac{\alpha^\frac{5}{2}}{R^3}, \frac{\alpha^\frac{7}{2}}{R^4}, \frac{\alpha^\frac{9}{2}}{R^5}$ grow as fractional powers of $R$ because, therefore, for $R$ sufficiently large, the additional terms on the right-hand side can be absorbed in the left-hand side. This entails the restriction $\alpha^3\geq M_1 R^{4+\varepsilon}$ about $\alpha$ which leads to the exponential decay rate in Theorem~\ref{Dawson_result}.  

Let us observe that also in this fifth order setting, if one considered a differential equation in which \emph{third} and fourth derivatives do not appear, namely
\begin{equation}\label{eq_daw1}
	\partial_tv + \partial_x^5 v + a_2(x,t) \partial_x^2 v + a_1(x,t) \partial_x v + a_0(x,t) v=0,
\end{equation}
 in this case we just need to guarantee that $\frac{\alpha^\frac{5}{2}}{R^3}, \frac{\alpha^\frac{7}{2}}{R^4}, \frac{\alpha^\frac{9}{2}}{R^5}$ grow as a fractional positive power of $R,$ that holds true assuming $\alpha ^4\geq M_1 R^5.$ This means that in this situation a stronger unique continuation result could be achieved requiring a weaker decay rate for the solutions at two distinct times. 

In~\cite{Isaza}, it was proved that this fact holds for a quite general class of high order equations of KdV type, which includes the KdV hierarchy. 
Precisely that work is concerned with unique continuation results for the equation
\begin{equation}\label{Isaza_equation}
	\partial_t v +(-1)^{k+1} \partial_x^n v + P(v, \partial_x v, \dots, \partial_x^p v)=0, \qquad (x,t)\in \R\times [0,1],
\end{equation}  
where $n=2k+1,$ $k=1,2, \dots$ and $P$ is a polynomial in $v, \partial_x v, \dots, \partial_x^p v,$ with $p\leq n-1.$ Of particular interest in that work were the cases $p=n-2$ and $p\leq k$ with $n\geq 5.$ 
For these situations it was proved that if the difference of two sufficiently smooth solutions of the equation~\eqref{Isaza_equation} with $p=n-2$ decays as $e^{- x_+^{4/3 + \varepsilon}}$ at two distinct times, then $u_1\equiv u_2.$ Moreover when $p\leq k$ a similar result was obtained assuming the weaker decay $e^{-a x_+^{n/n-1}}$ for $a>0$ sufficiently large.
\end{remark}	
		
\medskip
Now we are in position to prove the lower bound.
\begin{proof}[Proof of Theorem~\eqref{lower_bound}]
The starting point in the proof of the lower bound is to apply estimate~\eqref{Carleman_bis} in Lemma~\ref{Carleman2} to a particular function $g$ that we shall define to be suitably related with the solution $v$ of~\eqref{generalized_Z-K_diff}, in order to do that for $R>2$ we introduce the function $\theta_R\in C^\infty(\R^2)$ so defined
\begin{equation*}
\theta_R(x,y)=
	\begin{system}
		1 \qquad \text{if}\quad \abs{x+y}<R-1\quad \land\quad \abs{x-y}<R-1\\
		0 \qquad \text{if}\quad \abs{x+y}>R \phantom{-1~}\,\quad \land\quad \abs{x-y}>R
	\end{system},
\end{equation*}
$\mu\in C^\infty(\R^2)$ such that
\begin{equation*}
\mu(x,y)=
	\begin{system}
		0 \qquad \text{if}\quad \sqrt{x^2+y^2}<1\vspace{0.2cm}\\
		1 \qquad \text{if}\quad \sqrt{x^2+y^2}>2
	\end{system}
\end{equation*}
and $\varphi\colon \R \to [0, 2\sqrt{2}],$ $\varphi \in C_0^\infty(\R)$ with
		\begin{equation*}
			\varphi(t)=
			\begin{system}
				0 \qquad \hspace{0.45cm} \text{if}\quad t \in \big[ 0, \frac{r}{2}\big] \cup \big[ 1- \frac{r}{2},1 \big]\\
				2\sqrt{2} \qquad \text{if}\quad t \in [r, 1-r] 
			\end{system},
		\end{equation*}
		increasing in $[\frac{r}{2}, r]$ and decreasing in $[1-r, 1- \frac{r}{2}].$
		
		We define the auxiliary function
		\begin{equation*}
			g(x,y,t)= \theta_R(x,y)\,\mu\Big(\frac{x}{R} + \varphi(t), \frac{y}{R} + \varphi(t)\Big)\, v(x,y,t), \qquad (x,y) \in \R^2,\, t\in[0,1].
		\end{equation*}
		It is easy to see that $g$ satisfies
		\begin{equation*}
			\begin{split}
				(&\partial_t + \partial_x^3 + \partial_y^3 + a_1 (\partial_x + \partial_y) + a_0) g\\
											&=\mu\Big(\frac{x}{R} + \varphi(t), \frac{y}{R} + \varphi(t)\Big) \Big[3\partial_x \theta_R \partial_x^2 v + 3 \partial_y \theta_R \partial_y^2 v + 3 \partial_x^2 \theta_R \partial_x v + 3 \partial_y^2 \theta_R \partial_y v + \partial_x^3 \theta_R v + \partial_y^3 \theta_R v\\
											& \hspace{+4.2cm} + a_1 \partial_x \theta_R v + a_1 \partial_y \theta_R v \Big]\\
				& \quad +3 R^{-1} \theta_R \partial_x \mu\, \partial_x^2 v + 3 R^{-1} \theta_R \partial_y \mu\, \partial_y^2 v + 3 R^{-1} \big[( R^{-1} \theta_R \partial_x^2 \mu + 2 \partial_x \theta_R \partial_x \mu) \partial_x v + (R^{-1} \theta_R \partial_y^2 \mu + 2 \partial_y \theta_R \partial_y \mu)\partial_y v\big]\\
				& \quad + \Big[ \theta_R \partial_x \mu \Big(\varphi' + \frac{a_1}{R} \Big) + \theta_R \partial_y \mu \Big(\varphi' + \frac{a_1}{R} \Big) + R^{-3} \partial_x^3 \mu + R^{-3} \partial_y^3 \mu + 3 R^{-1} \partial_x^2 \theta_R \partial_x \mu + 3 R^{-1} \partial_y^2 \theta_R \partial_y \mu\\
				& \qquad \, \, + 3 R^{-2} \partial_x \theta_R \partial_x^2 \mu + 3 R^{-2} \partial_y \theta_R \partial_y^2 \mu  
								\Big] v.
			\end{split}
		\end{equation*}
		\begin{remark}\label{eq_g}
			Observe that since in the first term in the right-hand side of the previous equation the derivatives of $\theta_R$ appear, this term is supported in $\{(x,y) \colon R-1 \leq \abs{x+y} \leq R, \, R-1 \leq \abs{x-y} \leq R\}\times [0,1],$ so in particular $(x,y)$ is such that $\sqrt{x^2+y^2}\leq R$ and this entails $\abs*{\frac{\varkappa}{R} + \varphi(t)\xi}\leq 5.$ Moreover, one can notice that all the remaining terms, sorted with respect to their dependence on the derivatives of our solution $v,$  contain the derivatives of $\mu,$ this means that they are supported in $\{ (x,y,t)\colon 1 \leq\abs*{\frac{\varkappa}{R} + \varphi(t)\xi}\leq 2, \, t \in [0,1]\}.$
		 \end{remark}
		
		Next we verify that function $g$ defined above satisfies the hypotheses of Lemma~\ref{Carleman2}. Indeed 
		\begin{itemize}
			\item if $\abs{x+y}>R \, \land \abs{x-y}>R$  then we fall outside the support of $\theta_R,$ this means that $g(x,y,t)=0.$
			\item if $\abs{x+y}<R \, \land \abs{x-y}<R$ and $t \in [0, \frac{r}{2}] \cup [1- \frac{r}{2}, 1]$ then $g(x,y,t)=0,$ indeed being $\abs{x+y}<R\, \land \abs{x-y}<R$ in particular $\sqrt{x^2+y^2}<R$ and since $\varphi(t)=0$ if $t \in [0, \frac{r}{2}] \cup [1- \frac{r}{2}, 1],$ this gives $\abs*{\frac{\varkappa}{R} + \varphi(t)\xi}<1,$ therefore we are out of the support of $\mu\big(\frac{x}{R} + \varphi(t), \frac{y}{R} + \varphi(t) \big)$ and so  $g(x,y,t)=0.$ 
		\end{itemize}
		This guarantees that $g$ is compactly supported.
		
		Now we observe that $g$ is supported in $\{ (x,y,t)\in \R^2 \times [0,1] \colon \abs*{\frac{\varkappa}{R} + \varphi(t)\xi}\geq 1\},$ indeed if $\abs*{\frac{\varkappa}{R} + \varphi(t)\xi}<1$ then $\mu\big( \frac{x}{R} + \varphi(t), \frac{y}{R} + \varphi(t) \big)=0$ and so $g(x,y,t)=0.$
		
		\smallskip
		Since $g$ satisfies the hypotheses of Lemma~\ref{Carleman2}, there exist $c>0, R_0$ and $M_1$ such that 
		\begin{equation}\label{first_in}
			c \frac{\alpha^\frac{5}{2}}{R^3} \norm{ e^{\alpha \phi} g}_{L^2(\R^2\times[0,1])} \leq \norm{e^{\alpha \phi}(\partial_t + \partial_x^3 + \partial_y^3 + a_1 (\partial_x + \partial_y) + a_0)g}_{L^2(\R^2\times[0,1])}.
		\end{equation}
		Recalling that $\phi(x,y,t)=\abs*{\frac{\varkappa}{R} + \varphi(t)\xi}^2$ and making use of Remark~\ref{eq_g} it is easy to see that
		\begin{equation}\label{second_in}
			\norm{e^{\alpha \phi}(\partial_t + \partial_x^3 + \partial_y^3 + a_1 (\partial_x + \partial_y) + a_0)g}_{L^2(\R^2\times[0,1])} \leq c_1\, e^{25 \alpha} A_R(v) + c_2\, e^{4\alpha} A.
		\end{equation}
		
		\smallskip
		We observe that in the region $Q=\{ (x,y,t)\colon \sqrt{x^2+ y^2}\leq 1, \, t\in [r, 1-r]\}$ we have $g(x,y,t)=v(x,y,t).$ Indeed if $\sqrt{x^2+y^2}\leq 1,$ in particular $\sqrt{x^2+ y^2}\leq R-1,$ therefore $\theta_R(x,y)\equiv 1.$ 
		
		Moreover in $Q$ it also holds that $\mu\big(\frac{x}{R} + \varphi(t), \frac{y}{R} + \varphi(t) \big)\equiv 1.$ Indeed, using that $\varphi(t)\equiv 2\sqrt{2}$ in $[r,1-r],$ the trivial inequality $a+b\leq \sqrt{2} \, \sqrt{a^2 +b^2}$ which holds for all $a,b>0$ and the assumption $R>2,$ we have
		\begin{equation*}
			\abs*{\frac{\varkappa}{R} + 2\sqrt{2}\xi}^2
			=16 + \frac{x^2+y^2}{R^2} + \frac{4\sqrt{2}}{R}(x+y)\geq 16 - \frac{4\sqrt{2}}{R}(\abs{x}+ \abs{y}) 
			\geq 16 -\frac{8}{R}\sqrt{x^2+y^2}\geq 12.
		\end{equation*}
			Then $\abs*{\frac{\varkappa}{R} + \varphi(t)\xi}>\sqrt{12}>2$ and so $\mu\big(\frac{x}{R} + \varphi(t), \frac{y}{R} + \varphi(t) \big)\equiv 1.$
			
		Using that $g\equiv v$ in $Q$ we obtain the following chain of inequalities:
		\begin{equation}\label{third_in}
			c \frac{\alpha^{\frac{5}{2}}}{R^3} \norm{e^{\alpha \phi} g}_{L^2(\R^2\times[0,1])}\geq c \frac{\alpha^{\frac{5}{2}}}{R^3} \norm{e^{\alpha \phi} g}_{L^2(Q)}=c \frac{\alpha^{\frac{5}{2}}}{R^3} \norm{e^{\alpha \phi} v}_{L^2(Q)}\geq c \frac{\alpha^\frac{5}{2}}{R^3} e^{4\alpha} \norm{v}_{L^2(Q)}. 
		\end{equation}
		From~\eqref{first_in},~\eqref{second_in},~\eqref{third_in} and the assumption $\norm{v}_{L^2(Q)}>\delta$ we get 
		\begin{equation*}
			c \frac{\alpha^\frac{5}{2}}{R^3} e^{4\alpha} \delta \leq c_1\, e^{25 \alpha} A_R(v) + c_2\, e^{4\alpha} A, 
		\end{equation*}
		therefore
		\begin{equation*}
			c \frac{\alpha^\frac{5}{2}}{R^3} \delta \leq c_1 \, e^{21 \alpha} A_R(v) + c_2 A.
		\end{equation*}
		Taking $\alpha=M_1^\frac{1}{2} R^\frac{3}{2}$ with $M_1$ as in Lemma~\ref{Carleman2} we obtain
		\begin{equation*}
			c M_1^\frac{5}{4} R^\frac{3}{4} \delta \leq c_1 e^{21 M_1^\frac{1}{2} R^\frac{3}{2}} A_R(v) + c_2 A.
		\end{equation*}
		Now if we take $R$ large enough, the second term on the right-hand side of the previous inequality can be absorbed by the term on the left-hand side, so we can conclude that there exists $\widetilde{R}_0>0$ such that for $R\geq \widetilde{R}_0$ the following holds 
		\begin{equation*}
			A_R(v)\geq \frac{c}{2} e^{-21 M_1^\frac{1}{2} R^\frac{3}{2}}.
		\end{equation*}
		This yields the desired result.
\end{proof}


\subsection{Upper bound}
Now we will turn on the proof of the upper bound for solutions of~\eqref{generalized_Z-K_diff}. 
Precisely the result we will prove is the following.

\begin{theorem}\label{upper_bound}
	Assume that the coefficients in~\eqref{generalized_Z-K_diff} $a_0, a_1$ satisfy $a_0\in L^\infty \cap L_x^2 L_{y t}^\infty$ and $a_1 \in L_x^2 L_{y t}^\infty\cap L_x^1 L_{y t}^\infty.$  If $v\in C([0,1]; H^4(\R^2))$ is a solution of~\eqref{generalized_Z-K_diff} satisfying that 
	\begin{equation*}
		v(0), v(1) \in L^2(e^{a\abs{x+y}^\frac{3}{2}} dx\, dy)
	\end{equation*}
	for some $a>0$, then there exist $c$ and $R_0>0$ sufficiently large such that for $R\geq R_0$
	\begin{equation*}
		\norm{v}_{L^2(Q_R \times [0,1])} + \sum_{ 0< k+l\leq 2} \norm{\partial_x^k \partial_y^l v}_{L^2(Q_R \times [0,1])}\leq c e^{-a \left(\frac{R}{18}\right)^\frac{3}{2}},
	\end{equation*}
	where $ Q_R:=\{ (x, y) \colon R-1\leq \abs{x+y}\leq R\, \land \, R-1\leq \abs{x-y}\leq R \}.$
\end{theorem}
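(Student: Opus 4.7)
\emph{Strategy.} Following the Escauriaza--Kenig--Ponce--Vega scheme, the plan is to prove a \emph{persistence} statement: the exponential weight $e^{a|x+y|^{3/2}/2}$ present at $t=0$ and $t=1$ is preserved uniformly throughout the time interval $[0,1]$. The upper bound on $Q_R$ then follows by restricting the weighted $L^2$ bound to the annular region where $|x+y|\ge R-1$.

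\emph{Step 1 (truncated weight).} For a large parameter $N$ (eventually sent to $+\infty$), introduce a smooth function $\phi_N(x,y)$ that coincides with $|x+y|^{3/2}$ on $\{|x+y|\le N\}$, is constant for $|x+y|\ge 2N$, and has all partial derivatives controlled. Define $w_N := e^{a\phi_N/2}\, v$. Because $\phi_N$ is bounded for each fixed $N$, we have $w_N(t)\in L^2(\mathbb{R}^2)$ for every $t$, and the quantities $\|w_N(0)\|_{L^2}$, $\|w_N(1)\|_{L^2}$ are bounded uniformly in $N$ by the hypothesis $v(0),v(1)\in L^2(e^{a|x+y|^{3/2}}\,dxdy)$.

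\emph{Step 2 (weighted energy estimate).} A direct conjugation shows that $w_N$ satisfies
\[
\partial_t w_N + (\partial_x^3+\partial_y^3)w_N + B_2(x,y,t)\cdot \nabla^2 w_N + \widetilde{a}_1(\partial_x+\partial_y)w_N + \widetilde{a}_0\, w_N = 0,
\]
where the coefficients $B_2,\widetilde{a}_0,\widetilde{a}_1$ depend on $a_0,a_1$ and on derivatives of $\phi_N$ up to third order. Taking the $L^2(\mathbb{R}^2)$ inner product with $w_N$, the antisymmetric principal part $(\partial_x^3+\partial_y^3)$ drops out, and the commutator contribution yields positive quadratic forms in $\nabla w_N$ weighted by $\phi_N''$ (with the favourable sign already observed in Lemma \ref{Carleman1}). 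The lower-order terms are absorbed using the mixed-norm hypotheses $a_0\in L^\infty\cap L^2_xL^\infty_{yt}$ and $a_1\in L^2_xL^\infty_{yt}\cap L^1_xL^\infty_{yt}$, combined with a local smoothing estimate (a half-derivative gain in the direction transverse to $x+y=\mathrm{const}$) for the free group $e^{-t(\partial_x^3+\partial_y^3)}$. Integrating in time and invoking Gr\"onwall's inequality together with the two-endpoint control from the hypothesis gives
\[
\sup_{t\in[0,1]}\|w_N(t)\|_{L^2(\mathbb{R}^2)}^2 \;\le\; C\bigl(\|w_N(0)\|_{L^2}^2+\|w_N(1)\|_{L^2}^2\bigr)\;\le\; C',
\]
with $C,C'$ independent of $N$. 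Monotone convergence as $N\to\infty$ then yields $\sup_{t\in[0,1]}\|e^{a|x+y|^{3/2}/2}v(t)\|_{L^2(\mathbb{R}^2)}\le C$. Since $|x+y|\ge R-1$ on $Q_R$, we obtain $\|v\|_{L^2(Q_R\times[0,1])} \le C\, e^{-a(R-1)^{3/2}/2}$, which, after adjusting the geometric constants, is dominated by $Ce^{-a(R/18)^{3/2}}$ for $R$ large enough; the factor $18$ emerges from the precise size of the coefficient of $\phi_N$ that can be absorbed in the energy step (rather than the formal $a/2$). The derivatives $\partial_x^k\partial_y^l v$ with $0<k+l\le 2$ are handled by running the same weighted energy argument on $\partial_x^k\partial_y^l v$, using $v\in C([0,1];H^4)\cap C^1([0,1];L^2)$, or equivalently by a local interpolation between the weighted $L^2$ decay of $v$ and the uniform $H^2$ bound on $Q_R$.

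\emph{Main obstacle.} The delicate point is Step 2: the third-order conjugation produces a term of order $a^3|\nabla\phi_N|^3\sim a^3|x+y|^{3/2}$, which is of exactly the same order as the weight itself and cannot be absorbed by a naive Cauchy--Schwarz. One must exploit the smoothing gain coming from the skew-symmetric part of the conjugated operator (the two-dimensional analogue of the Kato smoothing effect for KdV), and it is precisely for this reason that the rather rigid mixed-norm hypotheses on $a_0$ and $a_1$ are imposed. Balancing the derivative gain against the growth of $|\nabla\phi_N|^3$ uniformly in $N$, while maintaining freedom to choose $a$ in accordance with the constant $a_0$ of Theorem \ref{linear_result}, is what drives the precise geometric constants (and in particular the factor $18$) appearing in the final decay rate.
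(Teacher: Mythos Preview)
Your proposal contains a genuine gap in Step~2, and it is precisely the obstacle you identify at the end. The direct energy approach with the weight $e^{a\phi_N}$, $\phi_N\sim|x+y|^{3/2}$, does not close with a \emph{constant} exponent~$a$. Conjugation of $\partial_x^3+\partial_y^3$ by this weight produces the zero-order term $-\bigl((\partial_x\psi)^3+(\partial_y\psi)^3\bigr)w_N$ with $\psi=\tfrac{a}{2}\phi_N$, and when you pair with $w_N$ this yields a contribution of size $\sim a^3\int|x+y|^{3/2}|w_N|^2$, which is unbounded in space and cannot be absorbed by Gr\"onwall (Gr\"onwall requires a bounded coefficient). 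The local smoothing gain is only one derivative, not enough to dominate a potential that grows like $|x+y|^{3/2}$. This is exactly why, in the paper's optimality result (Theorem~\ref{thm:optimality}), the same energy identity is run with a \emph{time-dependent} $a(t)$ satisfying $a'+\tfrac{27}{4}a^3=0$, so that $\partial_t\phi_n$ cancels the cubic term; with constant $a$ the argument simply does not close. Your ``two-endpoint Gr\"onwall'' is also not a valid step: Gr\"onwall is one-sided, and using data at both $t=0$ and $t=1$ requires a genuine two-sided a~priori estimate, not a differential inequality.

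The paper's proof is structurally different. It never uses the $3/2$-power weight in the a~priori estimate. Instead it (i) proves, via explicit Fourier-multiplier bounds on the inverse of the conjugated operator $\partial_t+(\partial_x-\beta)^3+(\partial_y-\beta)^3$, an a~priori estimate (Lemma~\ref{lemma2.2bis}) with the \emph{linear} weight $e^{\beta|x+y|}$; this is where the mixed norms on $a_0,a_1$ and the two-time boundary data enter. Then it (ii) applies the estimate not to $v$ but to $w=\mu_R v$, where $\mu_R$ is a spatial cutoff vanishing for $|x+y|\le R$ and equal to~$1$ for $|x+y|\ge(18R-1)/4$; this produces a forcing term $e_R$ supported in the strip $R\le|x+y|\le(18R-1)/4$. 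Finally it (iii) chooses the $R$-dependent parameter $\beta=\dfrac{2aR^{3/2}}{18R-1}$: with this choice $\beta|x+y|\le\tfrac{a}{8}|x+y|^{3/2}$ for $|x+y|\ge R$ (so the boundary terms are controlled by the hypothesis via interpolation), while on the outer annulus $\{18R-1\le|x+y|\le18R\}$ one has $\beta|x+y|\ge2aR^{3/2}$, dominating the error contribution $\sim e^{\beta(18R-1)/4}$. The constant $18$ is thus purely geometric, coming from the ratio between the outer radius where $w=v$ and the inner radius of the cutoff support; it has nothing to do with any energy balance of the $3/2$-weight.
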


As in~\cite{B_I_M} we shall prove first the following lemma whose proof can be found in the Appendix.

\begin{lemma}\label{lemma2.2bis}
	Let $w \in C([0,1]; H^4(\R^2)) \cap C^1([0,1]; L^2(\R^2))$ such that for all $t \in [0,1]$ $\supp w(t)\subseteq K,$ where $K$ is a compact subset of $\R^2.$ 
	
	Assume that $a_0\in L^\infty \cap L_x^2 L_{y t}^\infty$ and $a_1 \in L_x^2 L_{y t}^\infty\cap L_x^1 L_{y t}^\infty,$ with small norms in these spaces. 
	
	Then there exists $c>0,$ independent of the set $K,$ such that for $\beta\geq 1$ the following estimate holds
	\begin{multline*}
		\norm{e^{\beta \abs{x+y}} w}_{L^2(\R^2\times[0,1])} + \sum_{0< k+ l \leq 2} \norm{e^{\beta \abs{x+y}} \partial_x^k \partial_y^l w}_{L_x^\infty\, L_{y t}^2(\R^2\times[0,1])} 
		\\ \leq  c\, \beta^2\big (\norm{J^3(e^{\beta \abs{x+y}} w(0))}_{L^2(\R^2)} + \norm{J^3(e^{\beta \abs{x+y}} w(1))}_{L^2(\R^2)} \big)
									 \\ + c\, \norm{e^{\beta \abs{x+y}} (\partial_t +\partial_x^3 + \partial_y^3 +a_1(\partial_x +\partial_y) +a_0)w}_{L_t^1 L_{x y}^2 \cap L_x^1 L_{y t}^2(\R^2\times[0,1])}
\end{multline*} 		
with $J$ such that $\widehat{J g}(\xi, \eta):= (1 + \xi^2 + \eta^2)^\frac{1}{2} \widehat{g}(\xi, \eta).$ (Here, $\widehat{\phantom{g}}$ denotes the spatial Fourier transform in $\R^2$ and $(\xi,\eta)$ are the variables in the frequency space corresponding to the space variables $(x,y).$)
\end{lemma}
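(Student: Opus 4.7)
The plan is to conjugate by the exponential weight and then invoke the classical linear dispersive estimates for the free Zakharov-Kuznetsov group $U(t)=e^{-t(\partial_x^3+\partial_y^3)}$ (energy conservation, Kato smoothing, maximal function bound), combined with Duhamel's formula for the conjugated function. Since the phase $|x+y|$ is non-smooth, I would first regularize it by smooth functions $\phi_N$ satisfying $\phi_N\equiv |x+y|$ on $\{|x+y|\leq N\}$ and having globally bounded derivatives. The uniform compact support of $w(\cdot,t)$ in $t\in[0,1]$ guarantees that $e^{\beta\phi_N}w\equiv e^{\beta|x+y|}w$ as soon as $N$ is large enough, so nothing is lost by working with $\phi_N$ and passing $N\to\infty$ at the end.

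Setting $f:=e^{\beta\phi_N}w$, a direct conjugation computation gives
\begin{equation*}
(\partial_t+\partial_x^3+\partial_y^3)f+a_1(\partial_x+\partial_y)f+a_0 f = e^{\beta\phi_N}\bigl(\partial_t+\partial_x^3+\partial_y^3+a_1(\partial_x+\partial_y)+a_0\bigr)w+\mathcal{E}_\beta(f),
\end{equation*}
where $\mathcal{E}_\beta(f)$ collects the perturbative terms produced by the conjugation of the third-order part. Each term in $\mathcal{E}_\beta(f)$ is linear in spatial derivatives of $f$ of order at most two, with coefficients polynomial in $\beta$ of degree at most three and uniformly bounded in $(x,y,t)$ thanks to the bounded derivatives of $\phi_N$. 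I would then write Duhamel's formula with the group $U(t)$, splitting $[0,1]=[0,1/2]\cup[1/2,1]$ and anchoring at $t=0$ on the first half and at $t=1$ on the second, which explains the symmetric appearance of both $w(0)$ and $w(1)$ on the right-hand side.

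The mixed norms on the left-hand side are then bounded through the linear estimates for $U(t)$: the $L^2$ energy identity in $(x,y)$; the Kato-type smoothing effect giving $\norm{\partial U(t)f_0}_{L_x^\infty L_{yt}^2}\leq c\norm{f_0}_{L_{xy}^2}$ together with its inhomogeneous dual from $L_x^1L_{yt}^2$; and the maximal-function estimate producing the $L_t^1L_{xy}^2$-to-data bound. Three derivatives on the endpoint data $J^3(e^{\beta|x+y|}w(0,1))$ appear because up to second-order spatial derivatives of $f$ are to be controlled in $L_x^\infty L_{yt}^2$, while the gain in the smoothing estimate is at most one derivative; the prefactor $\beta^2$ arises from the leading term in $\mathcal{E}_\beta$, which scales as $\beta^2\partial_x f$ since the first derivatives of $\phi_N$ are bounded.

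The main obstacle is the closing step, in which all the right-hand side terms must be absorbed into the left. The coefficient terms are controlled via H\"older in the mixed spaces,
\begin{equation*}
\norm{a_1(\partial_x+\partial_y)f}_{L_x^1L_{yt}^2}\leq \norm{a_1}_{L_x^1L_{yt}^\infty}\norm{(\partial_x+\partial_y)f}_{L_x^\infty L_{yt}^2},\qquad \norm{a_0 f}_{L_t^1L_{xy}^2}\leq \norm{a_0}_{L^\infty}\norm{f}_{L_t^\infty L_{xy}^2},
\end{equation*}
and then absorbed thanks to the smallness hypotheses on $a_0$ and $a_1$. The conjugation errors in $\mathcal{E}_\beta(f)$ are more delicate because they carry explicit powers of $\beta$; the idea is to partition $[0,1]$ into finitely many subintervals whose length is chosen as a small negative power of $\beta$, so that on each the short-time Duhamel integration produces a factor defeating the $\beta$-growth and allowing absorption. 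Iterating the resulting short-time estimate over the finite partition yields the stated bound with the prefactor $\beta^2$ in front of the endpoint data, and finally sending $N\to\infty$ recovers the weight $e^{\beta|x+y|}$.
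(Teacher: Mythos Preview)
Your approach has a genuine gap in the closing step. The conjugation error $\mathcal{E}_\beta(f)$ contains terms of the form $\beta\,\partial^2 f$, $\beta^2\,\partial f$, and $\beta^3 f$ (with bounded coefficients). When you place, say, the $\beta^3 f$ contribution into the $L_t^1L_{xy}^2$ Duhamel norm on a subinterval of length $\delta$, you gain at most a factor $\delta$, so absorption forces $\delta\lesssim\beta^{-3}$. You must then iterate over roughly $\beta^3$ subintervals, and each step multiplies the data by a fixed constant, producing a final prefactor of order $C^{\beta^{3}}$ rather than the claimed $\beta^2$. The same obstruction is worse for the $L_x^1L_{yt}^2$ portion of the forcing norm, where short time yields no smallness whatsoever; and you may not convert $L_x^\infty$ into $L_x^1$ via the compact support of $w$, since the lemma explicitly requires the constant $c$ to be independent of $K$. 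In short, treating the conjugated operator as a perturbation of the free group $U(t)$ cannot deliver a constant polynomial in $\beta$, and the lemma as stated is not reached.

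The paper avoids this by a different mechanism. Rather than regularizing $|x+y|$, it works with the \emph{linear} weights $e^{\pm\beta(x+y)}$ separately (the absolute value is recovered by taking both signs and combining). The point is that the conjugated operator
\[
e^{\beta(x+y)}(\partial_t+\partial_x^3+\partial_y^3)e^{-\beta(x+y)}=\partial_t+(\partial_x-\beta)^3+(\partial_y-\beta)^3
\]
is still constant-coefficient, so it can be inverted \emph{exactly} via the space--time Fourier multiplier $m_0(\xi,\eta,\tau)=\bigl(i\tau+(i\xi-\beta)^3+(i\eta-\beta)^3\bigr)^{-1}$, whose denominator has nonzero real part off a null set. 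One then proves directly that $m_0$ and $(i\xi-\beta)^k(i\eta-\beta)^l m_0$ for $0\le k+l\le 2$ map between the relevant mixed-norm spaces with constants independent of $\beta$. There are thus no $\beta$-dependent conjugation errors to absorb at all; the factor $\beta^2$ and the $J^3$ on the endpoint data arise solely from a single Sobolev-embedding step ($H^1_x\hookrightarrow L^\infty_x$) used to pass to the $L_x^\infty L_{yt}^2$ norm. The smallness of $a_0,a_1$ is then used, exactly as you describe, to absorb the lower-order coefficient terms---but only those, not any commutator with the weight.
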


\begin{remark}
	Although we have assumed $w(t)$ to be compactly supported for all $t\in [0,1],$ we shall see that the argument in Lemma~\ref{lemma2.2bis} can be extended to a larger class of functions. Indeed we only have to ensure that the following quantity
	\begin{equation*}
			\norm{e^{\beta \abs{x+y}} w}_{L^2} + \sum_{0< k+ l \leq 2} \norm{e^{\beta \abs{x+y}} \partial_x^k \partial_y^l w}_{L_x^\infty\, L_{y t}^2}
		\end{equation*}
		is finite.
		
		To justify this affirmation we need to anticipate few facts regarding the proof of Lemma~\ref{lemma2.2bis} that can be found in the Appendix below. Just to convey the idea we introduce the following notation similar to that one already used by Escauriaza, Kenig, Ponce and Vega in~\cite{E_K_P_V2}
		\begin{align*}
			&\normeq{h}_1:=\norm{e^{\beta\abs{x+y}} h}_{L^2(\R^2\times[0,1])} + \sum_{0<k+l\leq 2} \norm{e^{\beta \abs{x+y}} \partial_x^k \partial_y^l h}_{L^\infty_x L^2_{y t}(\R^2\times [0,1])},\\
			&\normeq{h}_2:=\norm{h}_{L^1_t L^2_{x y} \cap L^1_x L^2_{y t}(\R^2 \times [0,1])}
		\end{align*}
		
		As customary we will start proving the estimate involving just the leading part of the operator we are working with, namely $H:=\partial_t + \partial_x^3+ \partial_y^3,$ then we extended our \emph{a priori} estimate to the whole operator, that is $H_a:= \partial_t + \partial_x^3 + \partial_y^3 + a_1(\partial_x +\partial_y) +a_0,$ using an ``adding and subtracting'' argument. More precisely, using our new notation, the starting estimate for the leading operator $H$ can be written as
		\begin{equation*}
			\normeq{w}_1\leq c\,\beta^2 \big(\norm{J^3(e^{\beta \abs{x+y}} w(0))}_{L^2(\R^2)} + \norm{J^3(e^{\beta \abs{x+y}} w(1))}_{L^2(\R^2)} \big) + \normeq{e^{\beta \abs{x+y}}H w}_2.
		\end{equation*}
		
		Now the second step we follow is to extend this inequality to the operator $H_a.$ Using the H\"older inequality and the smallness assumptions about $a_0$ and $a_1$ we get
		\begin{equation*}
			\begin{split}
					\normeq{w}_1&\leq c\,\beta^2 \big(\norm{J^3(e^{\beta \abs{x+y}} w(0))}_{L^2(\R^2)} + \norm{J^3(e^{\beta \abs{x+y}} w(1))}_{L^2(\R^2)} \big) + \normeq{e^{\beta \abs{x+y}}H w}_2\\
											&\leq c\,\beta^2 \big(\norm{J^3(e^{\beta \abs{x+y}} w(0))}_{L^2(\R^2)} + \norm{J^3(e^{\beta \abs{x+y}} w(1))}_{L^2(\R^2)} \big)\\
											&\phantom{~\leq} + \normeq{e^{\beta \abs{x+y}}H_a w}_2 + \normeq{e^{\beta \abs{x+y}}\big(a_1(\partial_x +\partial_y) +a_0\big) w}_2\\
											&\leq c\,\beta^2 \big(\norm{J^3(e^{\beta \abs{x+y}} w(0))}_{L^2(\R^2)} + \norm{J^3(e^{\beta \abs{x+y}} w(1))}_{L^2(\R^2)} \big)\\
											&\phantom{~\leq} + \normeq{e^{\beta \abs{x+y}}H_a w}_2 + \norm{a_0}_{L^\infty \cap L^2_x L^\infty_{y t}} \norm{e^{\beta \abs{x+y}} w}_{L^2} + \norm{a_1}_{L^2_x L^\infty_{y t} \cap L^1_x L^\infty_{y t}} \norm{e^{\beta \abs{x+y}} (\partial_x + \partial_y)w}_{L^\infty_x L^2_{y t}}\\
											&\leq c\,\beta^2 \big(\norm{J^3(e^{\beta \abs{x+y}} w(0))}_{L^2(\R^2)} + \norm{J^3(e^{\beta \abs{x+y}} w(1))}_{L^2(\R^2)} \big)\\
											&\phantom{~\leq} + \normeq{e^{\beta \abs{x+y}}H_a w}_2 + \frac{1}{2} \normeq{w}_1.
				\end{split}
		\end{equation*}
	Hence, if we are working on a class of solutions $w=w(x,y,t)$ for which $\normeq{w}_1$ is finite for all $\beta>0,$ we can obtain the desired result, that is the extended a priori estimate
		\begin{equation*}
			\normeq{w}_1\leq c\,\beta^2 \big(\norm{J^3(e^{\beta \abs{x+y}} w(0))}_{L^2(\R^2)} + \norm{J^3(e^{\beta \abs{x+y}} w(1))}_{L^2(\R^2)} \big) + c \normeq{e^{\beta \abs{x+y}}H_a w}_2.
		\end{equation*}
		Now it should appear clear that in order to obtain the result in Lemma~\ref{lemma2.2bis} the hypothesis $w(t)$ to be compactly supported is overabundant, it is sufficient to produce solutions for which $\normeq{w}_1$ is finite. 
		
		In order to ensure the finiteness of $\normeq{w}_1$ it is sufficient to prove that a so-called persistence property for the solution flow to~\eqref{Z-K_s} holds true. More precisely it is sufficient to prove that if a solution to~\eqref{Z-K_s} has a suitable exponential decay at two different instant of time, namely $t_1=0$ and $t_2=1,$ then this decay rate is preserved for all $t\in [0,1].$ The proof of this property and thus of the finiteness of the $\normeq{w}_1$ can be found in Appendix~\ref{Appendix:persistence} (Theorem~\ref{persistence_properties}).
\end{remark}

Now we are in position to prove the upper estimate in Theorem~\ref{upper_bound}. 
	\begin{proof}[Proof of Theorem~\ref{upper_bound}]
		We construct a $C^\infty$ truncation function $\mu_R$ with $\mu_R(x,y)=0$ if $\abs{x+y}\leq R$ and $\mu_R(x,y)=1$ if $\abs{x+y}\geq \frac{18 R -1}{4}.$
		
		Let us define 
		\begin{equation*}
			w(x,y,t):= \mu_R(x,y) v(x,y,t).
		\end{equation*}
		Now we want to see what kind of equation is satisfied by $w.$ It is easy to see that, since $v$ is a solution of~\eqref{generalized_Z-K_diff}, the following holds
		\begin{equation*}
			\big(\partial_t + \partial_x^3 + \partial_y^3 + a_1(x,y,t)(\partial_x + \partial_y) + a_0(x,y,t)\big) w = e_R(x,y,t),
		\end{equation*}
		where 
		\begin{equation*}
			e_R(x,y,t)=\partial_x^3 \mu_R v + 3\partial_x^2\mu_R \partial_x v + 3 \partial_x \mu_R \partial_x^2 v + \partial_y^3 \mu_R v + 3\partial_y^2\mu_R \partial_y v + 3 \partial_y \mu_R \partial_y^2 v + a_1(x,y,t) \partial_x \mu_R v + a_1(x,y,t) \partial_y \mu_R v.
		\end{equation*}
		This means that our function $w$ solves an equation like~\eqref{generalized_Z-K_diff} but with a correction term $e_R.$
		Next step would be apply Lemma~\ref{lemma2.2bis} to function $w.$ To do so, we first need $a_0, a_1$ to have small norms. Therefore we introduce $\widetilde{\mu_R}$ such that $\widetilde{\mu_R} \mu_R(x,y)= \mu_R(x,y),$ and $\widetilde{a}_j:=a_j(x,y,t) \widetilde{\mu}_R$ with $j=0,1$ have small norms in the corresponding spaces for $R\geq R_0.$

Let us consider the operator
\begin{equation}\label{operator_tilde}
	\widetilde{L}:= \partial_t + \partial_x^3 + \partial_y^3 + \widetilde{a}_1 (\partial_x + \partial_y) + \widetilde{a}_0,
\end{equation}	

Now we are in position to apply~\eqref{lemma2.2bis} with the operator $\widetilde{L}.$ This gives
\begin{multline}\label{last_one}
		\norm{e^{\beta \abs{x+y}} w}_{L^2} + \sum_{0< k+ l \leq 2} \norm{e^{\beta \abs{x+y}} \partial_x^k \partial_y^l w}_{L_x^\infty\, L_{y t}^2}\\
			\leq  c\, \beta^2\big (\norm{J^3(e^{\beta \abs{x+y}} w(0))}_{L^2} + \norm{J^3(e^{\beta \abs{x+y}} w(1))}_{L^2} \big)
									 + c\, \norm{e^{\beta \abs{x+y}} e_R}_{L_t^1 L_{x y}^2 \cap L_x^1 L_{y t}^2}.
\end{multline} 		
\begin{remark}
	With an abuse of notation we have called $\widetilde{e}_R$ (the corresponding remainder coming from the action of $\widetilde{L}$ on $w$) as $e_R.$
\end{remark}

We consider the term $c\, \beta^2\norm{J^3(e^{\beta \abs{x+y}} w(0))}_{L^2}.$ 

Since $w$ is supported in the set $\{(x,y,t)\colon \abs{x+y} \geq R,\, t\in [0,1]\}$ and using that the $\mu_R$ and its derivatives are bounded by a constant independent of $R,$ it follows
\begin{equation*}
	\begin{split}
		c\, \beta^2 \norm{J^3(e^{\beta \abs{x+y}} w(0))}_{L^2}&\leq c\,\beta^5 \sum_{0\leq k+l \leq 3} \norm{e^{\beta \abs{x+y}} \partial_x^k \partial_y^l w(0)}_{L^2}\\
																			&\leq c\, \beta^5 \sum_{0\leq k+l \leq 3} \norm{e^{\beta \abs{x+y}} \partial_x^k \partial_y^l w(0)}_{L^2(\abs{x+y} \geq R\big)}\\
																			&\leq c \,\beta^5 \sum_{0\leq k+l \leq 3} \norm{e^{\beta \abs{x+y}} \partial_x^k \partial_y^l v(0)}_{L^2\big(\abs{x+y}\geq R \big)}.
	\end{split}
\end{equation*}		
Now we want to choose $\beta$ in such a way to obtain in the right-hand side of the previous estimate the weighted norm of $v(0)$ with the  right exponential weight. Let 
\begin{equation*}
	\beta= \frac{2 a R^\frac{3}{2}}{ 18 R -1}.
\end{equation*}
Using the explicit expression of $\beta$ it can be seen that for $R$ sufficiently large depending on $a$ one has
\begin{equation*}
	\beta^5 e^{\beta \abs{x+y}} \leq \Big( \frac{2 a R^\frac{3}{2}}{18R -1} \Big)^5 e^{\frac{2 a R^\frac{3}{2}}{18 R- 1} \abs{x + y}} \leq c_a e^{\frac{a}{8} \abs{x+y}^\frac{3}{2}}, \qquad \text{for} \quad \abs{x + y} \geq R.
\end{equation*}
Using the previous estimate one has
\begin{equation*}
	c \beta^2 \norm{ J^3 (e^{\beta \abs{x+y}} w(0))}_{L^2} \leq c_a \sum_{0\leq k+l \leq 3} \norm{e^{\frac{a}{8} \abs{x+y}^\frac{3}{2}} \partial_x^k \partial_y^l v(0)}_{L^2(\abs{x+y} \geq R)}.
\end{equation*}
Let us recall that under our hypothesis $v(0)\in L^2(e^{a\abs{x+y}^\frac{3}{2}} dx\, dy),$ this can be rephrase saying that 
\begin{equation}\label{hyp}
	\norm{e^{\frac{a}{2} \abs{x+y}^\frac{3}{2}} v(0)}_{L^2}
\end{equation}
is finite.

Using an interpolation argument and the finiteness of~\eqref{hyp}, it can be seen that $\norm{e^{\frac{a}{8} \abs{x+y}^\frac{3}{2}} \partial_x^k \partial_y^l v(0)}_{L^2}$ is finite. 

To show this we will employ the following interpolation result.
\begin{lemma}\label{interpolation}
	For $s>0$ and $a>0,$ let $f\in H^s(\R^2) \cap L^2\big(e^{a\abs{x+y}^\frac{3}{2}} \, dx dy\big).$ Then, for $\theta \in [0,1],$
	\begin{equation*}
		\norm{J^{s(1-\theta)}\big( e^{\theta \frac{a}{2} \abs{x+y}^\frac{3}{2}} f \big)}_{L^2} \leq C \norm{J^s f}_{L^2}^{1-\theta} \norm{e^{\frac{a}{2} \abs{x+y}^\frac{3}{2}} f}_{L^2}^\theta,
	\end{equation*}
	for $C= C(a, s).$
\end{lemma}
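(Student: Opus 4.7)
My approach would be to apply Stein's complex interpolation theorem to the analytic family
\[
T_z f := J^{s(1-z)}\!\bigl(e^{z\Phi} f\bigr), \qquad \Phi(x,y) := \tfrac{a}{2}\abs{x+y}^{3/2},
\]
defined for $z$ in the closed strip $\{0 \le \mathrm{Re}\,z \le 1\}$. Observe that $T_\theta f$ is precisely the function whose $L^2$-norm appears on the left of the claim, so it suffices to control $T_\theta$ by interpolating between the two boundary lines. Fixing an auxiliary test function $G \in L^2(\R^2)$ with $\norm{G}_{L^2} \le 1$, the scalar map $F(z) := \langle G, T_z f\rangle$ is holomorphic in the interior of the strip, continuous on its closure, and of at most polynomial growth in $\abs{\mathrm{Im}\,z}$ on vertical lines, the hypothesis $f \in H^s \cap L^2(e^{a\abs{x+y}^{3/2}}\,dx\,dy)$ ensuring convergence of the defining integrals.

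I would next establish the two boundary estimates. On $\mathrm{Re}\,z = 1$, writing $T_{1+it} f = J^{-sit}\bigl(e^{\Phi}\cdot e^{it\Phi} f\bigr)$ and using that the Fourier multiplier $J^{-sit}$ has symbol $(1+\abs{\xi}^2+\abs{\eta}^2)^{-sit/2}$ of unit modulus (hence is an $L^2$ isometry), together with the trivial isometry property of multiplication by $e^{it\Phi}$, one obtains the clean identity $\norm{T_{1+it} f}_{L^2} = \norm{e^{\Phi} f}_{L^2}$. On $\mathrm{Re}\,z = 0$, the same isometry arguments reduce matters to proving
\[
\norm{J^s(e^{it\Phi} f)}_{L^2} \le C(1+\abs{t})^N \norm{J^s f}_{L^2}
\]
for a fixed $N = N(s)$. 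With both boundary bounds in hand, Stein's theorem --- equivalently, the three-lines lemma applied to $e^{(z-\theta)^2} F(z)$, the Gaussian factor being inserted to absorb polynomial growth in $\abs{\mathrm{Im}\,z}$ --- would yield
\[
\abs{F(\theta)} \le C\,\norm{J^s f}_{L^2}^{1-\theta}\,\norm{e^{\Phi} f}_{L^2}^\theta,
\]
and the claim follows upon taking the supremum over $G$ in the unit ball of $L^2$.

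The hard part is the commutator bound on the line $\mathrm{Re}\,z = 0$: because $\partial\Phi \sim \abs{x+y}^{1/2}$ is unbounded, a direct Kato-Ponce type commutator estimate is not available. I would overcome this by exploiting that $\Phi$ depends only on the single linear combination $x+y$: after applying a partial Fourier transform in the orthogonal direction $x-y$, the question reduces to a one-dimensional commutator estimate between $(1+\xi_u^2+\xi_v^2)^{s/2}$ and multiplication by $e^{it\frac{a}{2}\abs{u}^{3/2}}$ (acting in the $u=x+y$ variable), which can be handled by classical fractional-derivative techniques together with Plancherel. Because $\Phi$ has only H\"older continuous first derivative at the cusp $x+y = 0$, some additional care is needed in a neighborhood of that set, but polynomial growth in $\abs{t}$ is enough since the Gaussian regularizer in the three-lines argument subsumes it.
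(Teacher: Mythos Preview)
Your overall strategy---Stein complex interpolation via the three-lines lemma applied to the analytic family $T_z f = J^{s(1-z)}(e^{z\Phi}f)$---is the standard route, and it is precisely what the paper indicates (the companion Lemma~\ref{interpolation_persistence} in the Appendix, for the linear exponential weight $e^{\beta(x+y)}$, is said there to follow from the three-line theorem; the present lemma is simply stated without proof). Your treatment of the line $\mathrm{Re}\,z=1$ is correct and clean.

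The gap is on $\mathrm{Re}\,z=0$. The bound you write down,
\[
\norm{J^s(e^{it\Phi}f)}_{L^2}\le C(1+\abs{t})^N\norm{J^sf}_{L^2},
\]
is \emph{false} for this weight. Take $f$ to be a fixed smooth bump translated so that it is centered where $x+y=R$; then $\norm{J^sf}_{L^2}\sim1$, but on the support of $f$ the factor $e^{it\Phi}$ oscillates with frequency of order $tR^{1/2}$ in the $x+y$ direction, so $\norm{J^s(e^{it\Phi}f)}_{L^2}\sim(1+\abs{t}R^{1/2})^s$, which is unbounded as $R\to\infty$. Your proposed reduction to one dimension does not cure this: after passing to the variable $u=x+y$ the obstruction is exactly the unboundedness of $\Phi'(u)\sim\abs{u}^{1/2}$, and no ``classical fractional-derivative technique'' controls $\norm{J^s(e^{it\frac{a}{2}\abs{u}^{3/2}}g)}_{L^2_u}$ by a constant multiple of $\norm{J^s g}_{L^2_u}$. (For the linear exponential weight of Lemma~\ref{interpolation_persistence} the analogous step \emph{does} work, because there $\nabla\Phi$ is constant and $e^{it\Phi}$ is a pure modulation, bounded on $H^s$ with polynomial growth in $t$ by Peetre's inequality.)

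The standard repair, used in the Escauriaza--Kenig--Ponce--Vega papers on KdV cited here, is to replace $\Phi$ by a smooth truncation $\Phi_N$ that agrees with $\Phi$ on $\abs{x+y}\le N$ and has globally bounded derivatives; one runs the three-lines argument for $\Phi_N$ (where the $\mathrm{Re}\,z=0$ estimate now holds, with constants depending on $N$) and recovers the result for $\Phi$ by letting $N\to\infty$ via monotone convergence on the left-hand side. The delicate point---which your sketch does not address---is to organize the argument so that the final interpolation constant does not blow up with $N$; this is where the actual work lies.
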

Observe that by our hypotheses $v(0)\in L^2(e^{a \abs{x+y}^\frac{3}{2}} dx dy)$ and $v(t) \in C\big([0,1]; H^4(\R^2)\big)$ hence Lemma~\ref{interpolation} with $s=4$ and $\theta=\frac{1}{4}$ ensures that $\norm{e^{\frac{a}{8} \abs{x+y}^\frac{3}{2}} \partial_x^k \partial_y^l v(0)}_{L^2}$ is finite. 

Using this fact we obtain
\begin{equation}\label{one}
	c \, \beta^2 \norm{J^3(e^{\beta \abs{x+y}} w(0))}_{L^2}\leq c_a.
\end{equation}
A similar argument shows that
\begin{equation}\label{two}
	c \, \beta^2 \norm{J^3(e^{\beta \abs{x+y}} w(1))}_{L^2}\leq c_a.
\end{equation}

It remains to bound the third term in the right-hand side of~\eqref{last_one}. 

Since $e_R$ is supported in $\Omega_R:=\{(x,y, t) \colon R\leq \abs{x + y} \leq \frac{18R -1}{4}, \, t\in[0,1]\},$ we find that
\begin{equation}\label{three}
	\begin{split}
		\norm{e^{\beta \abs{x+y}} e_R}_{L_t^1 L_{x y}^2 \cap L_x^1 L_{y t}^2} &\leq e^{\beta \frac{18 R -1}{4}} \norm{e_R \chi_{\Omega_R}}_{L_t^1 L_{x y}^2 \cap L_x^1 L_{y t}^2}\\
																				&\leq c\, e^{\beta \frac{18 R -1}{4}} \norm{(\abs{v} + \abs{\partial_x v} + \abs{\partial_y v} + \abs{\partial_x^2 v} + \abs{\partial_y^2 v}) \chi_{\Omega_R}}_{L_t^1 L_{x y}^2 \cap L_x^1 L_{y t}^2}\\
																				&\leq c R^\frac{1}{2} e^{\beta \frac{18 R -1}{4}},
	\end{split}
\end{equation} 
where in the last inequality we have used H\"older inequality and the fact that the area of the region $\Omega_R$ is of order $R.$

Summing up, using~\eqref{one},\eqref{two} and~\eqref{three} we have
\begin{equation*}
	\norm{e^{\beta \abs{x+y}} w}_{L^2} + \sum_{0< k+ l \leq 2} \norm{e^{\beta \abs{x+y}} \partial_x^k \partial_y^l w}_{L_x^\infty\, L_{y t}^2}\leq c_a + c R^\frac{1}{2} e^{\beta \frac{18 R- 1}{4}}\leq c_a R^\frac{1}{2} e^{\beta \frac{18 R- 1}{4}}.
\end{equation*}
Defining $D_R:= \{18R-1 \leq \abs{x+y} \leq 18R \, \land \, 18R-1 \leq \abs{x-y} \leq 18R \}$ see that $D_R\times[0,1]\subset \{\abs{x+y} \geq R\}\times[0,1],$ the set in which $w$ is supported. Observing that in $D_R\times[0,1]$ we have $w=v,$ one obtains
\begin{equation*}
	\begin{split}
		\norm{e^{\beta \abs{x+y}} v}_{L^2(D_R\times[0,1])} + \sum_{0< k+l \leq 2} \norm{e^{\beta \abs{x+y}} \partial_x^k \partial_y^l v}_{L^2(D_R\times[0,1])} &\leq R^\frac{1}{2} \big( \norm{e^{\beta \abs{x+y}} w}_{L^2} + \sum_{0< k+ l \leq 2} \norm{e^{\beta \abs{x+y}} \partial_x^k \partial_y^l w}_{L_x^\infty\, L_{y t}^2}  \big)\\
								&\leq c_a R e^{\beta \frac{18R -1}{4}}.	
	\end{split}
\end{equation*}
If $\abs{x + y} \geq 18 R-1,$ then 
\begin{equation*}
	\beta \abs{x+y} \geq \beta (18 R -1) = 2 a R^\frac{3}{2}.
\end{equation*}
Moreover since for sufficiently large $R,$\, $\beta \geq 1,$ one gets
\begin{equation*}
	R\, e^{\beta \frac{18 R-1}{4}} \leq e^{(\beta + 1)\frac{18 R-1}{4}} \leq e^{\beta (1 + \frac{1}{\beta}) \frac{18 R-1}{4}} \leq e^{2 \beta\frac{18 R-1}{4}}= e^{a R^\frac{3}{2}}. 
\end{equation*}
This implies that
\begin{equation*}
	e^{2 a R^\frac{3}{2}} \big( \norm{v}_{L^2(D_R\times[0,1])} + \sum_{0< k+l \leq 2} \norm{\partial_x^k \partial_y^l v}_{L^2(D_R\times[0,1])} \big) \leq c_a e^{a R^\frac{3}{2}},
\end{equation*}
which is equivalent to
\begin{equation*}
	\norm{v}_{L^2(D_R\times[0,1])} + \sum_{0< k+l \leq 2} \norm{\partial_x^k \partial_y^l v}_{L^2(D_R\times[0,1])}\leq c_a e^{- a R^\frac{3}{2}},
\end{equation*}
which written in terms of $Q_R$ gives
\begin{equation*}
	\norm{v}_{L^2(Q_R\times[0,1])} + \sum_{0< k+l \leq 2} \norm{\partial_x^k \partial_y^l v}_{L^2(Q_R\times[0,1])}\leq c_a e^{- a \big(\frac{R}{18}\big)^\frac{3}{2}}.
\end{equation*}
which yields the desired upper bound.
\end{proof}


\subsection{Conclusion of the proofs}

As it was commented in the introduction Theorem~\ref{main_result_s} will follow as a consequence of Theorem~\ref{linear_result}. Therefore we first provide the proof of Theorem~\ref{linear_result}.

\subsubsection{Proof of Theorem~\ref{linear_result}}
		If $v\nequiv 0$ we can assume after a possible translation, dilation and multiplication by a constant that $v$ satisfies the hypotheses of Theorem~\ref{lower_bound}. This means that for $R$ sufficiently large there exist constants $c_0$ and $c_1$ as given in Theorem~\ref{lower_bound} such that  
		\begin{equation}\label{upper_estimate}
			A_R(v)\geq c_0 e^{-c_1 R^\frac{3}{2}},
		\end{equation}
		where
		\begin{equation*}
		A_R(v):= \Bigg( \iint_{\,Q_R\times [0,1]} \big(\abs{v}^2 + \abs{\nabla v}^2 + \abs{\Delta v}^2\big)\, dx\, dy\, dt \Bigg)^\frac{1}{2}
	\end{equation*}
	and $Q_R=\{(x,y)\colon R-1\leq \abs{x+y}\leq R\, \land \, R-1\leq \abs{x-y}\leq R\}.$
		
		Applying Theorem~\ref{upper_bound} we can conclude that
		\begin{equation*}
			\norm{v}_{L^2(Q_R \times [0,1])} + \sum_{ 0< k+l\leq 2} \norm{\partial_x^k \partial_y^l v}_{L^2(Q_R \times [0,1])}\leq c e^{-a \big(\frac{R}{18}\big)^\frac{3}{2}}.
		\end{equation*}

		It is easy to see that the left-hand side of the previous expression can be bounded from below by the quantity $A_R(v),$ this gives
		\begin{equation}\label{lower_estimate}
			A_R(v)\leq c e^{-\frac{a}{18^{3/2}} R^\frac{3}{2}}.
		\end{equation}
		If one assumes $a>a_0:=18^{3/2} c_1,$ combining~\eqref{upper_estimate} and~\eqref{lower_estimate} and taking the limit as $R$ tends to infinity we get a contradiction.
		
		Therefore $v\equiv 0$ and Theorem~\ref{linear_result} is proved. \qed
		
	\subsubsection{Proof of Theorem~\ref{main_result_s}}
	We just need to show that Theorem~\ref{linear_result} applies when we consider $v$ as the difference $u_1 - u_2$ of two solutions to~\eqref{Z-K_s}.
			
	First of all we have already shown that if $u_1$ and $u_2$ are solutions to~\eqref{Z-K_s} then the difference $v$ satisfies
			\begin{equation*}
				\partial_t v + (\partial_x^3 + \partial_y^3) v + a_1 (\partial_x + \partial_y) v + a_0 v=0,
			\end{equation*}
			where 
			\begin{equation}\label{a_0-a_1}
				a_0= 4^{-\frac{1}{3}} (\partial_x + \partial_y)u_2 \qquad \text{and} \qquad a_1=4^{-\frac{1}{3}} u_1.
			\end{equation}
			Thus, one just needs to check that the coefficients $a_0, a_1$ as defined above satisfy the assumptions of Theorem~\ref{linear_result}, that is 
			\begin{equation}\label{cond:a_0-a_1}
			a_0 \in L^\infty \cap L_x^2L_{y,t}^\infty \qquad a_1 \in L^\infty \cap L_x^2 L_{y t}^\infty \cap L_x^1 L_{y t}^\infty.
			\end{equation}

	To do that, proceeding as in~\cite{B_I_M}, we will use the following interpolation result(see~\cite{N_P}).
	\begin{lemma}
		For $s>0$ and $a>0,$ let $f \in H^s(\R^2) \cap L^2( (1+ \abs{x + y})^{2a} \, dx dy).$ 
		Then for any $\theta \in (0,1),$
		\begin{equation}\label{interpolation_2}
			\norm{J^{\theta s} \big( (1+ \abs{x+y})^{(1-\theta) a} f \big)}_{L^2}\leq C \norm{J^s f}_{L^2}^{\theta} \norm{(1+ \abs{x + y})^a f}_{L^2}^{1-\theta},
		\end{equation}
		for $C=C(a,s).$
	\end{lemma}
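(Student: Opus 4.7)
The plan is to establish this inequality by complex interpolation via Stein's theorem for analytic families of operators, which is the natural framework whenever a quantitative bound must be ``interpolated'' between the two extremes $\theta=0$ and $\theta=1$ (at which the inequality becomes a trivial equality).

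Concretely, for $z$ in the closed strip $\{z\in\mathbb{C}:0\le\mathrm{Re}(z)\le 1\}$ I would consider the analytic family
\[
T_z f \;:=\; J^{zs}\Big((1+|x+y|)^{(1-z)a}\,f\Big),
\]
where $J^{zs}$ is the Fourier multiplier with symbol $(1+|\xi|^2+|\eta|^2)^{zs/2}$ and the complex weight is interpreted via $(1+|x+y|)^{(1-z)a}=e^{(1-z)a\log(1+|x+y|)}$. Choosing $f$ in a suitable dense class (say Schwartz functions), the map $z\mapsto\langle T_z f,g\rangle$ is analytic in the interior of the strip and continuous up to its closure, with growth in $|\mathrm{Im}(z)|$ at most of admissible type.

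The boundedness of $T_z$ on the two boundary lines has to be verified. On $\mathrm{Re}(z)=0$: the multiplier $J^{i\tau s}$ has symbol of modulus one, so
\[
\|T_{i\tau}f\|_{L^2} \;=\; \bigl\|(1+|x+y|)^{(1-i\tau)a}f\bigr\|_{L^2} \;=\; \bigl\|(1+|x+y|)^a f\bigr\|_{L^2},
\]
uniformly in $\tau$. On $\mathrm{Re}(z)=1$: writing $J^{(1+i\tau)s}=J^{i\tau s}J^s$ and denoting $M_\tau$ the multiplication operator by $(1+|x+y|)^{-i\tau a}$ (an isometry on $L^2$), one has
\[
\|T_{1+i\tau}f\|_{L^2} \;=\; \bigl\|J^s(M_\tau f)\bigr\|_{L^2} \;\le\; \|M_\tau J^s f\|_{L^2}+\bigl\|[J^s,M_\tau]f\bigr\|_{L^2} \;=\; \|J^s f\|_{L^2}+\bigl\|[J^s,M_\tau]f\bigr\|_{L^2}.
\]
A Kato--Ponce type fractional Leibniz estimate, together with the fact that the successive derivatives of $M_\tau$ decay like $(1+|x+y|)^{-k}$ but carry polynomial factors in $|\tau|$, yields $\|[J^s,M_\tau]f\|_{L^2}\le C(1+|\tau|)^{N}\|J^s f\|_{L^2}$ with $N=N(s)$, which is admissible growth for Stein's theorem.

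With both boundary bounds in hand, Stein's complex interpolation theorem applied at $z=\theta\in(0,1)$ delivers exactly the claimed estimate with $C=C(a,s)$. The main obstacle I expect is the commutator bound on $\mathrm{Re}(z)=1$: for non-integer $s$ one cannot simply expand $J^s(M_\tau f)$ by Leibniz, so one needs either a fractional Leibniz rule, a paraproduct decomposition, or to treat $M_\tau$ as a pseudodifferential operator of order zero whose seminorms grow polynomially in $|\tau|$. Once polynomial $\tau$-growth is secured on the right-hand boundary, the interpolation step is automatic.
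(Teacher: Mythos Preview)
Your approach is correct and is exactly the one the paper has in mind: the paper does not prove this lemma but cites Nahas--Ponce \cite{N_P}, and for the closely related exponential-weight interpolation (Lemma~\ref{interpolation_persistence}) it explicitly says ``can be proved using the three-line theorem.'' Your identification of the commutator $[J^s,M_\tau]$ on the line $\mathrm{Re}\,z=1$ as the only nontrivial step, and your proposed resolution via the $S^0_{1,0}$ calculus (polynomial growth of the symbol seminorms of $M_\tau$ in $|\tau|$), is the right way through; the Kato--Ponce formulation you mention first is less clean here because $(1+|x+y|)^{-i\tau a}\notin L^2$.
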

	Applying~\eqref{interpolation_2} with $s=4, a=\frac{4}{3} + \varepsilon$ and $\theta= \frac{1}{4} + \frac{3}{16}\varepsilon$ with $\varepsilon$ as in the statement of Theorem~\ref{main_result_s}, we have
	\begin{equation}\label{interpolation_explicit}
		\norm{J^{1+ \frac{3}{4}\varepsilon} \big( (1+ \abs{x + y})^{(1+ \varepsilon_1)} f \big)}_{L^2}\leq C \norm{J^4 f}_{L^2}^{\theta} \norm{(1+ \abs{x + y})^{\left(\frac{4}{3} + \varepsilon \right)} f}_{L^2}^{1-\theta},
	\end{equation}
	where $\varepsilon_1:= \frac{\varepsilon}{2} - \frac{3}{16}\varepsilon^2>0.$
	
	Applying~\eqref{interpolation_explicit} with $f=a_1= 4^{-\frac{1}{3}} u_1(t),$ from our hypothesis about the solution $u_1$ and from the embedding $H^{1 + \frac{3}{4} \varepsilon}(\R^2)\hookrightarrow L^\infty(\R^2) \cap C(\R^2)$ we obtain
	\begin{equation}\label{first_decay}
		\abs{u_1(x,y,t)}\leq \frac{c}{(1+ \abs{x+ y})^{(1+\varepsilon_1)}},
	\end{equation}
	for all $(x, y, t) \in \R^2 \times [0,1].$
	
	Since $1 + \frac{3}{4}\varepsilon >1,$ the estimate~\eqref{interpolation_explicit} is also true for $J^1$ instead of $J^{1 + \frac{3}{4}\varepsilon}$ with $f=4^{-\frac{1}{2}} u_2,$ using the product rule for the derivatives we obtain that $\norm{(1+\abs{x+y})^{(1+ \varepsilon_1)} 4^{-\frac{1}{3}} \partial_x u_2(t)}_{L^2(\R^2)}$ and  $\norm{(1+\abs{x+y})^{(1+ \varepsilon_1)} 4^{-\frac{1}{3}} \partial_y u_2(t)}_{L^2(\R^2)}$ are bounded function of $t\in [0,1].$ This allows us to apply~\eqref{interpolation_2} with $f=4^{-\frac{1}{3}} \partial_x u_2(t)$ and $f=4^{-\frac{1}{3}} \partial_y u_2(t),$ $s=3, a= 1+ \varepsilon_1$ and $\theta=\frac{1}{3} + \varepsilon_2$ with $\varepsilon_2>0$ small to obtain
	\begin{equation*}
		\norm{J^{1+ 3\varepsilon_2} \big( (1+ \abs{x + y})^{ \frac{2}{3}} 4^{-\frac{1}{3}} \partial_x u_2(t) \big)}_{L^2}\leq C \norm{J^3 (4^{-\frac{1}{3}} \partial_x u_2(t))}_{L^2}^{\theta} \norm{(1+ \abs{x + y})^{(1 + \varepsilon_1)} 4^{- \frac{1}{3}} \partial_x u_2(t)}_{L^2}^{1-\theta}
	\end{equation*}
	and the same for the derivative with respect to $y.$
	
	Using this estimate and again the Sobolev embeddings one has
	\begin{equation}\label{second_decay}
		\abs{4^{- \frac{1}{3}} (\partial_x + \partial_y) u_2(t)} \leq \frac{c}{(1+ \abs{x+y})^\frac{2}{3}}
	\end{equation}
	for all $(x,y,t)\in \R^2 \times [0,1].$
	
	From the decay properties expressed in~\eqref{first_decay} and~\eqref{second_decay} it is clear that hypothesis~\eqref{cond:a_0-a_1} holds.
	
	\begin{remark}
	As a final remark we observe that from~\eqref{first_decay} and~\eqref{second_decay} the functions $\widetilde{a}_j:=a_j(x,y,t)\widetilde{\mu}_R,$ with $j=0,1$ and $a_j(x,y,t)$ as in~\eqref{a_0-a_1}, have small norms in the corresponding spaces for $R$ sufficiently large as required in Theorem~\ref{upper_bound}. 
	
	Indeed choosing $\widetilde{\mu}_R$ in Theorem~\ref{upper_bound} as
	\begin{equation*}
		\widetilde{\mu}_R(x,y)=\chi_{\{(x,y): \abs{x + y} \geq R\}}(x,y),
	\end{equation*}
	with $\chi_{\Omega}$ the indicator function of the set $\Omega,$ it is easy to see from~\eqref{first_decay} and~\eqref{second_decay} that the following four terms  
	\begin{equation*}
		\norm{a_0 \chi_{\{(x,y): \abs{x+y} \geq R\}}}_{L^\infty \cap L_x^2 L_{y t}^\infty}, \qquad  \norm{a_1 \chi_{\{(x,y): \abs{x + y} \geq R\}}}_{L_x^2 L_{y t}^\infty \cap L_x^1 L_{y t}^\infty}
	\end{equation*}
	tend to zero as $R$ tends to infinity. This guarantees the smallness required. 
	\end{remark}
\qed	


\section{Proof of Theorem~\ref{main_result}}

As a starting point we recall that if $u_j=u_j(x,y,t),\, j=0,1$ is a solution to~\eqref{Z-K}, then $\widetilde{u}_j=\widetilde{u}_j(x',y',t):= u_j\big(\frac{x'+y'}{2\mu}, \frac{x'-y'}{2\lambda},t\big),\, j=0,1$ satisfies the symmetric problem~\eqref{Z-K_s}. In particular, this entails that if one provides uniqueness for solutions to~\eqref{Z-K_s}, namely $\widetilde{u}_1\equiv \widetilde{u}_2,$ then uniqueness for solutions to~\eqref{Z-K}, namely $u_1\equiv u_2,$ is also given. 
Thus in order to get our result, one just needs to show that if $u_j,\, j=0,1$ satisfies the hypotheses in Theorem~\ref{main_result}, then Theorem~\ref{main_result_s} applies to $\widetilde{u}_j,\, j=0,1.$ 

Using~\eqref{change_variable}, by elementary change of integration variables, it can be seen that for $j=0,1$
\begin{multline*}
	\norm{u_1(t_j)-u_2(t_j)}_{L^2(e^{a \abs{x}^{3/2}}\, dx dy)}^2= \iint_{\,\R^2} e^{a\abs{x}^\frac{3}{2}} (u_1(t_j)-u_2(t_j))^2\, dx\,dy\\= \frac{1}{2\lambda\mu} \iint_{\,\R^2} e^{\frac{a}{(2\mu)^{3/2}} \abs{x'+y'}^{3/2}} (\widetilde{u}_1(t_j)-\widetilde{u}_2(t_j))^2\, dx'\,dy'= \frac{1}{2\lambda\mu} \norm{\widetilde{u}_1(t_j)-\widetilde{u}_2(t_j)}_{L^2(e^{a/(2\mu)^{3/2} \abs{x+y}^{3/2}}\, dx dy)}^2, 
\end{multline*}
where we have used the notation $t_0=0,\, t_1=1.$ 

From the previous trivial identity it follows that if $u_1-u_2$ satisfies the decay assumption~\eqref{decay_properties} then 
\begin{equation*}
	\widetilde{u}_1(0)-\widetilde{u}_2(0), \widetilde{u}_1(1)-\widetilde{u}_2(1) \in L^2\big( e^{\widetilde{a} \abs{x+y}^{\frac{3}{2}}} dx dy\big),
\end{equation*} 
with $\widetilde{a}:=a/(2\mu)^{3/2}.$

Therefore, by Theorem~\ref{main_result_s}, we conclude that there exists a universal constant $a_0$ such that if $\widetilde{a}>a_0,$ that is if $a>(2\mu)^{3/2} a_0,$ then $\widetilde{u}_1\equiv \widetilde{u}_2.$ In particular this implies $u_1\equiv u_2,$ which is the desired result.
\qed

\section{Proof of Theorem~\ref{thm:optimality}}
This section is devoted to the proof of the optimality of our unique continuation result Theorem~\ref{main_result}. 

Following the argument provided in~\cite{I_L_P} we define the weight
\begin{equation*}
	\varphi_n(z,t):=
	\begin{system}
		e^{a(t)/4} \qquad \hspace{0.25cm} z\leq 0\\
		e^{a(t)\theta(z)} \qquad 0\leq z\leq 1\\
		e^{a(t) z^{3/2}} \qquad 1\leq z\leq n\\
		P_2(z,t) \qquad z\geq n
	\end{system}
\end{equation*}
where $a(t)$ is the unique solution to the following IVP
\begin{equation}\label{ODE_a}
	\begin{system}
		a'(t) + \frac{27}{4} a^3(t)=0,\\
		a(0)=a_0,
	\end{system}
\end{equation}
namely 
\begin{equation}\label{eq:a(t)}
	a(t)= \frac{a_0}{(1+27a_0^2t/2)^{1/2}}, \qquad \forall\, t\geq 0,
\end{equation}

\begin{equation*}
	\theta(z)= \frac{1}{4} + \frac{15}{8}z^3 - \frac{12}{8}z^4 + \frac{3}{8}z^5
\end{equation*}
and 
\begin{equation*}
	P_2(z,t)= e^{a(t)n^{3/2}} + \frac{3}{2}a(t)n^{1/2} e^{a(t)n^{3/2}}(z-n) + \Big( \big(\frac{3}{2} a(t) n^{1/2}\big)^2 + \frac{3}{4} a(t) n^{-1/2} \Big) e^{a(t)n^{3/2}} \frac{(z-n)^2}{2},
\end{equation*}
that is the second degree truncated Taylor expansion at $n$ of $e^{a(t)z^{3/2}}.$

The following properties will be useful hereafter.
\begin{itemize}
	\item For any $n\in \mathbb{N}$ and $z\geq 0$ one has
	\begin{equation}\label{phi_n_estimate}
	\varphi_n(z,t)\leq C_{a_0}e^{a(t)z^{3/2}},
\end{equation}
	for all $t\geq 0.$
	\item The function $a(t)\in (0,a_0]$ for all $t\geq 0.$
	\item Being 
	\begin{equation*}
		\theta''(z)=\frac{3}{4} z \Big( \big( \sqrt{10} z - \frac{12}{\sqrt{10}} \big)^2 + \frac{3}{5} \Big) \geq 0,
	\end{equation*}
	and $\theta'(0)=0,$ then $\theta'(z)\geq 0$ for $0\leq z\leq 1.$
	\item 
	\begin{equation*}
		\partial_z P_2(z,t)= \frac{3}{2} a(t) n^{1/2} e^{a(t)n^{3/2}} + \Big( \big(\frac{3}{2} a(t) n^{1/2} \big)^2 + \frac{3}{4} a(t) n^{-1/2}\Big) e^{a(t)n^{3/2}}(z-n) \geq 0,
	\end{equation*}
	for $z\geq n$ and $t\geq 0.$
\end{itemize}
From the previous facts, it follows that
\begin{equation*}
	\partial_z \varphi_n(z,t)\geq 0, \qquad (z,t)\in \R \times[0,\infty).
\end{equation*}
Let us define 
\begin{equation*}
	\phi_n(x,y,t):= \varphi_n(x+y,t).
\end{equation*}

Observe that 
\begin{equation}\label{positivity}
	\partial_x \phi_n(x,y,t)=\partial_y \phi_n(x,y,t)=\partial_z \varphi_n(x+y,t)\geq 0.
\end{equation}

Next, we multiply equation~\eqref{Z-K_s} by $u \phi_n,$ integrating the resulting identity and using integration by parts we get
\begin{equation*}
	\begin{split}
		\frac{1}{2} \frac{d}{dt} \int_{\R^2} u^2\, \phi_n - \frac{1}{2} \int_{\R^2} u^2 \partial_t \phi_n &-\frac{1}{2} \int_{\R^2} u^2\, \partial_x^3 \phi_n +\frac{3}{2} \int_{\R^2} (\partial_x u)^2 \partial_x \phi_n -\frac{4^{-1/3}}{3}\int_{\R^2} u^3 \partial_x \phi_n\\
		&-\frac{1}{2} \int_{\R^2} u^2\, \partial_y^3 \phi_n +\frac{3}{2} \int_{\R^2} (\partial_y u)^2 \partial_y \phi_n -\frac{4^{-1/3}}{3}\int_{\R^2} u^3 \partial_y \phi_n=0.
	\end{split}
\end{equation*} 
Multiplying the last identity by $2$ and using~\eqref{positivity} we can drop the positive terms to obtain
\begin{equation}\label{final_optimality}
	\frac{d}{dt} \int_{\R^2} u^2 \phi_n\leq \int_{\R^2} u^2 \big(\partial_x^3 \phi_n + \partial_y^3 \phi_n + \partial_t \phi_n \big) + \frac{2}{3}4^{-1/3} \int_{\R^2} u^3 \big(\partial_x \phi_n + \partial_y \phi_n\big).
\end{equation}

\begin{remark}
	Notice that by virtue of our assumptions, $u$ satisfies the hypotheses of Theorem~\ref{persistence_properties}, this means that in particular for any $t\in [0,1]$ $u(t)\in H^2(e^{2\beta(x+y)}\, dx dy)$ for all $\beta>0.$ This fact allows us to justify the integration by parts used to obtain~\eqref{final_optimality}. Indeed at infinity $\varphi_n(z,t)$ as a function of $z$ is a polynomial of order two. 
\end{remark}
%

Let us consider the right-hand side of~\eqref{final_optimality} in four different domains, namely 
\begin{equation*}
	a)\, x+y\leq 0,\qquad b)\, 0\leq x+y\leq 1,\qquad c)\, 1\leq x+y\leq n,\qquad d)\, x+y\geq n.
\end{equation*}
\begin{enumerate}[$a)$]
\item In the region $x+y\leq 0$ we have 
\begin{equation*}
	\partial_x^j \phi_n(x,y,t)=\partial_y^j \phi_n(x,y,t)=0, \quad j=1,2,3
\end{equation*}
and 
\begin{equation*}
	\partial_t \phi_n(x,y,t)= \frac{a'(t)}{4}e^{a(t)/4}=-\frac{27}{16}a^3(t)e^{a(t)/4}\leq 0.
\end{equation*}
Therefore
\begin{equation}\label{min_zero}
	\int_{\{x+y\leq 0\}} u^2(\partial_x^3 \phi_n + \partial_y^3 \phi_n + \partial_t \phi_n) + \frac{2}{3} 4^{-1/3} \int_{\{x+y\leq 0\}} u^3(\partial_x \phi_n + \partial_y \phi_n)\leq 0
\end{equation}
and hence the region $x+y\leq 0$ does not give any contribution to the right-hand side of~\eqref{final_optimality}. 
\item In the domain $0\leq x+y\leq 1$ we have 
\begin{equation*}
	\partial_t \phi_n(x,y,t)=a'(t)\theta(x+y)e^{a(t)\theta(x+y)}\leq 0,
\end{equation*}
indeed $a'(t)=-27/4a^3(t)\leq 0$ and $\theta(x+y)\geq \theta(0)=1/4>0.$

Moreover
\begin{equation*}
	\partial_x \phi_n(x,y,t)=\partial_y \phi_n(x,y,t)=a(t)\theta'(x+y)\phi_n(x,y,t)\leq c a_0 \phi_n(x,y,t),
\end{equation*}

\begin{equation*}
	\begin{split}
	\partial_x^2 \phi_n(x,y,t)&=\partial_y^2 \phi_n(x,y,t)=\big(a(t) \theta''(x+y) + (a(t)\theta'(x+y))^2\big)\phi_n(x,y,t)\\
	& \leq c(a_0 + a_0^2)\phi_n(x,y,t),
	\end{split}
\end{equation*}

\begin{equation*}
	\begin{split}
	\partial_x^3 \phi_n(x,y,t)&=\partial_y^3 \phi_n(x,y,t)=\big(a(t) \theta^{(3)}(x+y) + 3 a^2(t) \theta'(x+y) \theta''(x+y) + (a(t)\theta'(x+y))^3\big)\phi_n(x,y,t)\\
	& \leq c(a_0 + a_0^2 + a_0^3)\phi_n(x,y,t).
	\end{split}
\end{equation*}

Therefore 
\begin{equation}\label{zero_one}
	\int_{\{0\leq x+y\leq 1\}} u^2(\partial_x^3\phi_n + \partial_y^3 \phi_n +\partial_t \phi_n)\leq C_{a_0} \int_{\{0\leq x+y\leq 1\}} u^2 \phi_n
\end{equation}
and
\begin{equation}\label{zero_one_bis}
	\begin{split}
	\frac{2}{3}4^{-1/3} \int_{\{0\leq x+y\leq 1\}} u^3(\partial_x \phi_n + \partial_y \phi_n)&\leq C_{a_0} \norm{u(t)}_{L^\infty(0\leq x+y\leq 1)} \int_{\{0\leq x+y\leq 1\}} u^2 \phi_n\\
	&\leq C_{a_0} \norm{u}_{C([0,1];H^2(\R^2))}\int_{\{0\leq x+y\leq 1\}} u^2 \phi_n\\
	&\leq C_{a_0,u} \int_{\{0\leq x+y\leq 1\}} u^2 \phi_n,
	\end{split}
\end{equation}
where in the last but one inequality we have used the Sobolev embedding.

Notice that under our hypothesis about the solution $u$ the norm $\norm{u}_{C([0,1]; H^2(\R^2))}$ is finite.

\item In the region $1\leq x+y\leq n$ we have
\begin{equation*}
	\partial_t \phi_n(x,y,t)=a'(t)(x+y)^{3/2}\phi_n(x,y,t).
\end{equation*}

Moreover
\begin{equation*}
	\partial_x \phi_n(x,y,t)=\partial_y \phi_n(x,y,t)=\frac{3}{2} a(t)(x+y)^{1/2}\phi_n(x,y,t),
\end{equation*}

\begin{equation*}
	\partial_x^2 \phi_n(x,y,t)=\partial_y^2 \phi_n(x,y,t)=\Big(\frac{3}{4} a(t) (x+y)^{-1/2} + \frac{9}{4} a^2(t) (x+y)\Big)\phi_n(x,y,t)
\end{equation*}

\begin{equation*}
	\partial_x^3 \phi_n(x,y,t)=\partial_y^3 \phi_n(x,y,t)=\Big(-\frac{3}{8} a(t) (x+y)^{-3/2} + \frac{27}{8} a^2(t) +\frac{27}{8} a^3(t)(x+y)^{3/2}\Big)\phi_n(x,y,t)
\end{equation*}

Therefore 
\begin{equation*}
	\partial_x^3 \phi_n + \partial_y^3 \phi_n +\partial_t \phi_n=\Big( -\frac{3}{4} a(t) (x+y)^{-3/2} + \frac{27}{4} a^2(t) +\frac{27}{4} a^3(t)(x+y)^{3/2} + a'(t)(x+y)^{3/2}\Big)\phi_n.
\end{equation*}

Let us observe that the first term of the right hand side of the previous identity is negative, therefore, using also that $a(t)$ solves the Cauchy problem~\eqref{ODE_a} we get
\begin{equation*}
	\partial_x^3 \phi_n + \partial_y^3 \phi_n + \partial_t \phi_n \leq \frac{27}{4}a^2(t)\phi_n \leq \frac{27}{4}a_0^2 \phi_n.
\end{equation*} 
Using the previous inequality we have
\begin{equation}\label{one_n}
	\int_{\{1\leq x+y\leq n\}} u^2(\partial_x^3\phi_n + \partial_y^3 \phi_n +\partial_t \phi_n)\leq \frac{27}{4}a_0^2 \int_{\{1\leq x+y\leq n\}} u^2 \phi_n.
\end{equation}
and moreover
\begin{equation}\label{one_n_bis}
	\begin{split}
	\frac{2}{3}4^{-1/3} \int_{\{1\leq x+y\leq n\}} u^3(\partial_x \phi_n + \partial_y \phi_n)&\leq C a(t)\int_{\{1\leq x+y\leq n\}} (x+y)^{1/2} u^3 \phi_n\\
	& \leq C_{a_0} \norm{(x+y)^{1/2} u(t)}_{L^\infty(1\leq x+y\leq n)} \int_{\{1\leq x+y\leq n\}} u^2 \phi_n\\
	&\leq C_{a_0} \norm{e^{x+y} u}_{C([0,1]; H^2(\R^2))} \int_{\{1\leq x+y\leq n\}} u^2 \phi_n\\
	&\leq C_{a_0,u} \int_{\{1\leq x+y\leq n\}} u^2 \phi_n,
	\end{split}
\end{equation}
where in the last but one inequality we have used that $\norm{(x+y)^{1/2}u(t)}_{L^\infty(1\leq x+y\leq n)}\leq \norm{e^{x+y}u(t)}_{L^\infty(\R^2)}$ and the Sobolev embedding.

Moreover notice that Theorem~\ref{persistence_properties} guarantees that $\norm{e^{x+y}u}_{C([0,1]; H^2(\R^2))}$ is finite.
\item In the domain $x+y\geq n$ we have
\begin{equation*}
	\partial_x^3 \phi_n(x,y,t)=\partial_x^3 P_2(x+y,t)=0, \qquad \partial_y^3 \phi_n(x,y,t)=\partial_y^3 P_2(x+y,t)=0.
\end{equation*}
Moreover
\begin{equation*}
	\partial_t \phi_n(x,y,t)=\partial_t P_2(x+y,t)=a'(t)[\cdot]e^{a(t)n^{3/2}}\leq 0.
\end{equation*}
We also have that for $x+y\geq n$
\begin{equation*}
	\begin{split}
		\partial_x \phi(x,y,t)&= \partial_x P_2(x+y,t)= \frac{3}{2}a(t) n^{1/2}e^{a(t)n^{3/2}} + \frac{3}{2}a(t)n^{1/2} \Big[\frac{3}{2}a(t)n^{1/2} + \frac{1}{2n} \Big]e^{a(t)n^{3/2}}(x+y-n)\\
		&\leq \frac{3}{2}a(t) n^{1/2} P_2(x+y,t) + \Big[\frac{3}{2}a(t)n^{1/2} + \frac{1}{2n} \Big] P_2(x+y,t)\\
		&\leq (1+3a_0 n^{1/2})P_2(x+y,t)\\
		&\leq (1+3a_0 (x+y)^{1/2})\phi_n(x,y,t),
	\end{split}
\end{equation*}
and in the same way
\begin{equation*}
	\partial_y \phi(x,y,t)= \partial_y P_2(x+y,t)\leq (1+3a_0 (x+y)^{1/2})\phi_n(x,y,t).
\end{equation*}
Therefore
\begin{equation}\label{greater_n}
	\int_{\{x+y\geq n\}} u^2(\partial_x^3\phi_n + \partial_y^3 \phi_n +\partial_t \phi_n)\leq 0.
\end{equation}
proceeding as in the previous domain we also have 
\begin{equation}\label{greater_n_bis}
	\begin{split}
	\frac{2}{3}4^{-1/3} \int_{\{x+y\geq n\}} u^3(\partial_x \phi_n + \partial_y \phi_n)&\leq C \int_{\{x+y\geq n\}} u^3 (1+3a(t)(x+y)^{1/2}) \phi_n\\
	& \leq C_{a_0} \norm{(x+y)^{1/2} u(t)}_{L^\infty(x+y\geq n)} \int_{\{x+y\geq n\}} u^2 \phi_n\\
	&\leq C_{a_0} \norm{e^{x+y} u}_{C([0,1]; H^2(\R^2))} \int_{\{x+y\geq n\}} u^2 \phi_n\\
	&\leq C_{a_0,u} \int_{\{x+y\geq n\}} u^2 \phi_n.
	\end{split}
\end{equation}
\end{enumerate}

Using~\eqref{min_zero}-~\eqref{greater_n_bis} in~\eqref{final_optimality} we get
\begin{equation*}
	\frac{d}{dt}\int_{\R^2} u^2 \phi_n\leq C_{a_0,u} \int_{\R^2} u^2 \phi_n.
\end{equation*}
Applying the Gronwall inequality we obtain
\begin{equation*}
	\int_{\R^2} u^2(t) \phi_n \leq e^{C_{a_0,u} t}\int_{\R^2} u^2(0)\phi_n(0) \qquad \forall\, t\in [0,1].
\end{equation*}
The conclusion follows using~\eqref{phi_n_estimate} at $t=0$ and by Fatou's lemma letting $n$ go to infinity. 

\appendix
\section{Proof of Lemma~\ref{lemma2.2bis}}
Now we are in position to prove Lemma~\ref{lemma2.2bis}.
Actually we will give a proof of a slightly different and more general version of the previous lemma. Our result Lemma~\ref{lemma2.2bis} follows by using the same argument. 

\begin{lemma}\label{lemma2.2}
Let $w \in C([0,1];H^4(\R^2))\cap C^1([0,1]; L^2(\R^2))$ such that for all $t\in[0,1]$ $\supp w(t)\subseteq K,$ where $K$ is a compact subset of $\R^2.$
	
	Assume that $a_0\in L^\infty \cap L_x^2 L_{y t}^\infty$ and $a_1 \in L_x^2 L_{y t}^\infty\cap L_x^1 L_{y t}^\infty,$ with small norms in these spaces. 
	
	Then there exists $c>0,$ independent of the set $K,$ such that for $\beta >0$ and $\lambda >0$ the following estimate holds
	\begin{multline*}
		\norm{e^{\lambda \abs{x}} e^{\beta \abs{y}} w}_{L^2(\R^2\times [0,1])} + \sum_{0<k+l\leq 2} \norm{e^{\lambda \abs{x}} e^{\beta \abs{y}} \partial_x^k \partial_y^l w}_{L_x^\infty L_{y t}^2(\R^2\times [0,1])}\\
										\leq c (\lambda^2 + \beta^2) \big(\norm{J^3(e^{\lambda \abs{x}}e^{\beta \abs{y}} w(0))}_{L^2(\R^2)} + \norm{J^3(e^{\lambda \abs{x}}e^{\beta \abs{y}} w(1))}_{L^2(\R^2)}\big)\\
											 + c \norm{e^{\lambda \abs{x}} e^{\beta \abs{y}}(\partial_t + \partial_x^3 + \partial_y^3 + a_1(\partial_x + \partial_y) + a_0) w}_{L_t^1 L_{x y}^2 \cap L_x^1 L_{yt}^2 (\R^2 \times [0,1])}
	\end{multline*}
	with $J$ such that $\widehat{J g}(\xi, \eta):= (1 + \xi^2 + \eta^2)^\frac{1}{2} \widehat{g}(\xi, \eta).$ (Here, $\widehat{\phantom{g}}$ denotes the spatial Fourier transform in $\R^2$ and $(\xi,\eta)$ are the variables in the frequency space corresponding to the space variables $(x,y).$)
\end{lemma}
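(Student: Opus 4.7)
The plan is to follow the Duhamel-plus-Kato-smoothing framework used in the Escauriaza--Kenig--Ponce--Vega uniqueness theorems, exploiting the decisive structural feature of the symmetric form: the free part $H:=\partial_t+\partial_x^3+\partial_y^3$ has Fourier symbol $i(\xi^3+\eta^3)$, so the associated unitary group $U(t)=e^{-t(\partial_x^3+\partial_y^3)}$ factorizes as a tensor product of two independent $1$D Airy groups $U_x(t)\otimes U_y(t)$. First I would reduce to proving the estimate with $H$ instead of $H_a:=H+a_1(\partial_x+\partial_y)+a_0$: once the estimate for $H$ is in hand, the extra pieces are written as $\|e^{\lambda|x|+\beta|y|}a_0 w\|_{L^1_tL^2_{xy}}\leq \|a_0\|_{L^\infty}\|e^{\lambda|x|+\beta|y|}w\|_{L^2}$ (and the analogous estimates with the mixed norms of $a_1$), and their smallness moves them to the left, exactly as in the proof of Lemma~\ref{Carleman2} and in the remark following Lemma~\ref{lemma2.2bis}.

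Next I would conjugate. Setting $\phi(x,y)=\lambda|x|+\beta|y|$ (mollified near the axes and then passed to the limit, since $|x|$, $|y|$ are non-smooth), the function $v=e^{\phi}w$ satisfies
\begin{equation*}
\partial_t v+\partial_x^3 v+\partial_y^3 v = L_\phi v+e^{\phi}Hw,
\end{equation*}
where $L_\phi=e^{\phi}[H,e^{-\phi}]$ collects the commutator terms, whose coefficients are of the shape $\lambda\operatorname{sgn}(x)\partial_x^2$, $\lambda^2\partial_x$, $\lambda^3\operatorname{sgn}(x)$ (and similarly in $y$). To get both boundary values $v(0)$ and $v(1)$ into the representation I would use a convex-combination Duhamel identity such as
\begin{equation*}
v(t)=(1-t)\,U(t)v(0)+t\,U(t-1)v(1)+\int_0^1 K(t,s)\bigl[L_\phi v(s)+e^{\phi}Hw(s)\bigr]\,ds
\end{equation*}
with an appropriate kernel $K$ built from $U(t-s)$ and a cut-off in $s$.

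The analytic heart of the argument is a suite of Kato-type smoothing bounds for $U(t)$. Because of the tensor-product structure, the classical $1$D Airy smoothing $\|\partial_x U_x(t)f\|_{L^\infty_x L^2_t}\lesssim \|f\|_{L^2}$ lifts to bounds of the form $\|\partial_x^k\partial_y^l U(t)F\|_{L^\infty_x L^2_{yt}}\lesssim \|J^3 F\|_{L^2_{xy}}$ for $0<k+l\le 2$, together with $\|U(t)F\|_{L^2_{xyt}}\lesssim \|F\|_{L^2_{xy}}$ on the time slab $[0,1]$. By the standard dual-smoothing argument these give the matching inhomogeneous bounds, controlling the Duhamel piece by $\|G\|_{L^1_tL^2_{xy}\cap L^1_xL^2_{yt}}$ in the same mixed norms. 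Plugging the Duhamel identity into these estimates produces a bound of the form
\begin{equation*}
\mathrm{LHS}\leq C(\lambda^2+\beta^2)\bigl(\|J^3 v(0)\|_{L^2}+\|J^3 v(1)\|_{L^2}\bigr)+C\|e^{\phi}Hw\|_{L^1_tL^2_{xy}\cap L^1_xL^2_{yt}}+C\|L_\phi v\|_{L^1_tL^2_{xy}\cap L^1_xL^2_{yt}},
\end{equation*}
where the $(\lambda^2+\beta^2)$ prefactor in front of the data arises from the $J^3$-derivatives that the smoothing estimate produces acting on the conjugated boundary values.

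Finally I would close the estimate by absorbing the $L_\phi v$ term. Each summand of $L_\phi v$ is (up to a sign) of type $\lambda^j\partial_x^{3-j}v$ or $\beta^j\partial_y^{3-j}v$ for $j=1,2,3$, so its mixed $L^1_tL^2_{xy}\cap L^1_xL^2_{yt}$ norm is controlled by the quantities $\|v\|_{L^2}+\sum_{0<k+l\leq 2}\|\partial_x^k\partial_y^l v\|_{L^\infty_xL^2_{yt}}$ appearing on the left, with a small constant after noting that the factors $\operatorname{sgn}(x)$ and $\operatorname{sgn}(y)$ cost nothing in these norms. The main obstacles I anticipate are: (i) handling the non-smoothness of $|x|,|y|$ in the conjugation (addressed by a standard mollification), and (ii) coordinating the two distinct mixed norms $L^1_tL^2_{xy}$ and $L^1_xL^2_{yt}$ on the right-hand side, which is precisely what dictates the decomposition of the hypotheses into $a_0\in L^\infty\cap L^2_xL^\infty_{yt}$ and $a_1\in L^2_xL^\infty_{yt}\cap L^1_xL^\infty_{yt}$: H\"older's inequality, applied in each of the two mixed norms separately, delivers the smallness needed to absorb the $a_0,a_1$ contributions on the left.
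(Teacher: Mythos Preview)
Your reduction step (absorbing the $a_0,a_1$ contributions by smallness once the estimate for $H=\partial_t+\partial_x^3+\partial_y^3$ is available) matches the paper exactly. The gap is in how you propose to obtain the estimate for $H$.

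You conjugate by $e^{\lambda|x|+\beta|y|}$, write the commutator $L_\phi v$ as a sum of terms of size $\lambda^j\partial_x^{3-j}v$ and $\beta^j\partial_y^{3-j}v$, and claim that these can be absorbed into the left-hand side ``with a small constant''. This cannot work. The coefficients $\lambda^j,\beta^j$ are \emph{large}, not small, and there is no gain elsewhere to compensate: for instance the term $\lambda^3\operatorname{sgn}(x)v$ contributes $\lambda^3\|v\|_{L^1_tL^2_{xy}}\sim\lambda^3\|v\|_{L^2}$ to the right, against a bare $\|v\|_{L^2}$ on the left. Invoking the compact support of $w$ to pass between the mixed norms would introduce a constant depending on $K$, which the statement explicitly forbids. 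Similarly, the inhomogeneous smoothing estimate requires $L^1_xL^2_{yt}$ on the source, and there is no $K$-independent way to control $\|\partial_x^{3-j}v\|_{L^1_xL^2_{yt}}$ by the left-hand side norms.

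The paper avoids this absorption problem altogether by a different mechanism. It conjugates by the \emph{linear} weight $e^{\lambda x+\beta y}$ (one sign at a time, then combines the four sign choices), so that the conjugated operator $H_{\lambda,\beta}=\partial_t+(\partial_x-\lambda)^3+(\partial_y-\beta)^3$ has \emph{constant} coefficients and is inverted directly as a space--time Fourier multiplier $m_0(\xi,\eta,\tau)=\bigl(i\tau+(i\xi-\lambda)^3+(i\eta-\beta)^3\bigr)^{-1}$. The derivative estimates then amount to showing that the multipliers $(i\xi-\lambda)^k(i\eta-\beta)^l m_0$ are bounded $L^1_xL^2_{yt}\to L^\infty_xL^2_{yt}$ \emph{uniformly in $\lambda,\beta$}; this is done by a partial-fraction decomposition of the cubic polynomial in the denominator. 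The $(\lambda^2+\beta^2)$ in front of the boundary data arises not from commutators but from the pointwise bound $(1+\xi^2)^{1/2}|\xi+i\lambda|^2\leq(1+\lambda^2)(1+\xi^2)^{3/2}$ used when undoing the shift. The two boundary times enter through a time cut-off $\eta_\varepsilon$ (rather than a convex-combination Duhamel formula), whose derivative $\eta_\varepsilon'$ concentrates near $t=0$ and $t=1$ and produces the data terms in the limit $\varepsilon\to0$.
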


As in our proof of the Carleman estimate for the operator $P=\partial_t + \partial_x^3 + \partial_y^3 +a_1(\partial_x+\partial_y) +a_0,$ we first prove a counterpart of Lemma~\ref{lemma2.2} for the leading part of the operator $P,$ namely $\partial_t + \partial_x^3 + \partial_y^3.$

\begin{lemma}\label{lemma2.1}
	Let $w \in C([0,1]; H^4(\R^2)) \cap C^1([0,1]; L^2(\R^2))$ such that for all $t \in [0,1]$ $\supp w(t)\subseteq K,$ where $K$ is a compact subset of $\R^2.$ Then
		\begin{enumerate}
			\item For $\lambda>0$ and $\beta>0,$
						\begin{equation}\label{no_deriv}
							\begin{split}
								\norm{e^{\lambda \abs{x}} e^{\beta \abs{y}} w}_{L_t^\infty\, L_{x y}^2(\R^2 \times [0,1])} \leq& \norm{e^{\lambda \abs{x}} e^{\beta \abs{y}} w(0)}_{L^2(\R^2)} + \norm{e^{\lambda \abs{x}} e^{\beta \abs{y}} w(1)}_{L^2(\R^2)}\\
									 &+ \norm{e^{\lambda \abs{x}}  e^{\beta \abs{y}} (\partial_t + \partial_x^3 + \partial_y^3)w}_{L_t^1 L_{x y}^2(\R^2 \times [0,1])}.
							\end{split}
						\end{equation}
			\item There exists $c>0,$ independent of the set $K,$ such that for $\beta\geq 1$ and $\lambda\geq 1$
						\begin{equation}\label{deriv}
							\begin{split}
								\quad\norm{e^{\lambda \abs{x}} e^{\beta \abs{y}} Lw}_{L_x^\infty\, L_{y t}^2(\R^2 \times [0,1])}
								 &\leq c\, (\lambda^2 + \beta^2)\big (\norm{J^3(e^{\lambda \abs{x}} e^{\beta \abs{y}} w(0))}_{L^2(\R^2)} + \norm{J^3(e^{\lambda \abs{x}} e^{\beta \abs{y}} w(1))}_{L^2(\R^2)} \big)\\
									&\phantom{~\leq} + \norm{e^{\lambda \abs{x}} e^{\beta \abs{y}} (\partial_t + \partial_x^3 + \partial_y^3)w}_{L_x^1 L_{y t}^2(\R^2 \times [0,1])},
								\end{split}	
						\end{equation}
						where $L$ denotes any operator in the set $\{ \partial_x, \partial_y, \partial_x^2, \partial_y^2\}$ and $J$ is such that $\widehat{J g}(\xi, \eta):= (1 + \xi^2 + \eta^2)^\frac{1}{2} \widehat{g}(\xi, \eta).$
		\end{enumerate}
		\begin{remark}
			It is a fundamental fact that in order to obtain from~\eqref{no_deriv} and~\eqref{deriv} the estimate in Lemma~\ref{lemma2.2} for the whole operator $P,$ the coefficient in front of the term on the right-hand side of~\eqref{deriv} involving the operator $\partial_t +\partial_x^3 +\partial_y^3$ does not depend on $\lambda$ and $\beta,$ indeed otherwise, since $\lambda$ and $\beta$ grow as $R,$ the correction terms coming from the addition of the lower order derivatives  cannot be hidden in the left-hand side as desired.  	 \end{remark}
\end{lemma}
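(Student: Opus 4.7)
The plan is to split the proof into three stages that mirror the strategy spelled out in the remark immediately following the lemma: first prove the inequality for the leading third-order operator $H := \partial_t+\partial_x^3+\partial_y^3$, then recover the full operator $P = H + a_1(\partial_x+\partial_y)+a_0$ by treating the first- and zeroth-order coefficients as perturbations absorbed via the smallness assumption.

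Stage 1 (energy estimate, part (1) of Lemma~\ref{lemma2.1}). I would approximate the Lipschitz weight $e^{\lambda|x|+\beta|y|}$ by a smooth mollification (so as to apply the equation classically), set $v := e^{\lambda|x|+\beta|y|}w$, and note that $v$ satisfies $\partial_t v + \partial_x^3 v + \partial_y^3 v + \text{(commutator terms)} = e^{\lambda|x|+\beta|y|}Hw$. Pairing with $v$ in $L^2_{xy}$ and integrating by parts, the third-order transport parts produce skew-symmetric boundary-free contributions; the commutator terms are of lower order and sign-definite up to terms that integrate to zero by the product of $\text{sgn}(x)$, $\text{sgn}(y)$ factors. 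Standard Gronwall-type manipulation then yields the $L^\infty_t L^2_{xy}$ bound of part (1) with the RHS involving only the traces at $t=0,1$ and $e^{\lambda|x|+\beta|y|}Hw$ in $L^1_t L^2_{xy}$.

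Stage 2 (smoothing estimate, part (2) of Lemma~\ref{lemma2.1}). This is the Kato-type local smoothing estimate for the two-directional Airy group $e^{-t(\partial_x^3+\partial_y^3)}$ adapted to the exponential weight. For the homogeneous problem I would use Plancherel in the $(y,t)$ variables after a Fourier localization argument in $x$, relying on the fact that the $\partial_x^3$ propagator gains one derivative in the $L^\infty_x L^2_{yt}$ norm; the inhomogeneous case follows by Duhamel and a dual version of the same estimate. The constant $\lambda^2+\beta^2$ arises precisely because in the conjugated equation, two derivatives may hit the weight $e^{\lambda|x|+\beta|y|}$, producing factors of $\lambda\,\text{sgn}\,x$ and $\beta\,\text{sgn}\,y$; controlling these on the data side forces the $J^3$ weight on the right-hand side.

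Stage 3 (absorption of lower-order terms). Combining (1) and (2), one gets the desired estimate with $Hw$ in place of $Pw$ on the right. Writing $Hw = Pw - a_1(\partial_x+\partial_y)w - a_0 w$ and applying the triangle inequality in the $L^1_t L^2_{xy}\cap L^1_x L^2_{yt}$ norm, Hölder gives
\begin{equation*}
\|e^{\lambda|x|+\beta|y|}a_0 w\|_{L^1_t L^2_{xy}\cap L^1_x L^2_{yt}} \le \|a_0\|_{L^\infty\cap L^2_x L^\infty_{yt}}\,\|e^{\lambda|x|+\beta|y|}w\|_{L^2}
\end{equation*}
and
\begin{equation*}
\|e^{\lambda|x|+\beta|y|}a_1(\partial_x+\partial_y)w\|_{L^1_t L^2_{xy}\cap L^1_x L^2_{yt}} \le \|a_1\|_{L^2_x L^\infty_{yt}\cap L^1_x L^\infty_{yt}}\!\!\sum_{k+\ell=1}\!\!\|e^{\lambda|x|+\beta|y|}\partial_x^k\partial_y^\ell w\|_{L^\infty_x L^2_{yt}}.
\end{equation*}
By the smallness hypothesis on $a_0,a_1$, both terms are absorbed into the left-hand side, yielding the full estimate. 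The main obstacle is Stage 2: getting the sharp $L^\infty_x L^2_{yt}$ smoothing bound with the non-smooth weight $e^{\lambda|x|+\beta|y|}$ for a \emph{two-directional} third-order dispersive operator. The conjugated symbol $(i\xi-\lambda\,\text{sgn}\,x)^3+(i\eta-\beta\,\text{sgn}\,y)^3$ forces a quadrant-by-quadrant analysis, and one must keep track carefully of the $(\lambda^2+\beta^2)$ dependence since this factor propagates into the $(\lambda^2+\beta^2)$ coefficient on the right-hand side of the final estimate.
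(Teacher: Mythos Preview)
Your proposal has a genuine gap in Stage~1 and is too vague in Stage~2; moreover, Stage~3 is not part of Lemma~\ref{lemma2.1} at all (it is the content of the next lemma, Lemma~\ref{lemma2.2}).

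\textbf{Stage 1.} The energy/Gronwall route you sketch cannot produce estimate~\eqref{no_deriv} with a constant independent of $\lambda$ and $\beta$ on the forcing term --- and as the Remark following the lemma stresses, this independence is essential. Conjugating $\partial_x^3$ by $e^{\lambda x}$ gives $(\partial_x-\lambda)^3$, whose zeroth--order piece $-\lambda^3$ has the \emph{wrong} sign in the energy identity: pairing with $v$ produces $-\lambda^3\|v\|^2$, which after Gronwall contributes a factor $e^{(\lambda^3+\beta^3)t}$ to the estimate. For the weight $e^{\lambda|x|}e^{\beta|y|}$ the situation is worse: the conjugated zeroth--order term is $-\lambda^3\operatorname{sgn}(x)-\beta^3\operatorname{sgn}(y)$, which is neither sign--definite nor ``integrates to zero'' as you claim, and the second--order piece $-3\lambda\operatorname{sgn}(x)\partial_x^2$ likewise fails to have a definite sign after pairing. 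There is no cancellation mechanism here that would remove the $\lambda^3,\beta^3$ dependence.

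\textbf{What the paper does instead.} The paper avoids energy methods entirely and works with Fourier multipliers. One fixes a sign choice $(j\lambda,k\beta)$ with $j,k\in\{-1,1\}$, sets $v=e^{j\lambda x+k\beta y}w$, and observes that $v$ solves $H_{j\lambda,k\beta}v = e^{j\lambda x+k\beta y}Hw$ where $H_{\lambda,\beta}=\partial_t+(\partial_x-\lambda)^3+(\partial_y-\beta)^3$ has space--time symbol $i\tau+(i\xi-\lambda)^3+(i\eta-\beta)^3$. The inverse operator $T_0$ has symbol $m_0=\bigl(i\tau+(i\xi-\lambda)^3+(i\eta-\beta)^3\bigr)^{-1}$, and for fixed $(\xi,\eta)$ the inverse Fourier transform of $m_0$ in $\tau$ is an explicit one--sided exponential bounded in modulus by $\sqrt{2\pi}$, \emph{uniformly in} $\lambda,\beta$. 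This immediately gives $\|T_0 h\|_{L^\infty_t L^2_{xy}}\le \|h\|_{L^1_t L^2_{xy}}$ with constant~$1$, which is part~(1). For part~(2) one needs $\|(\partial_x-\lambda)^k(\partial_y-\beta)^\ell T_0 h\|_{L^\infty_x L^2_{yt}}\le C\|h\|_{L^1_x L^2_{yt}}$; here the symbol $(i\xi-\lambda)^k(i\eta-\beta)^\ell m_0$ is, for fixed $(\eta,\tau)$, a rational function of $\xi$ with cubic denominator, and a partial--fraction decomposition over the three roots again yields a bounded inverse Fourier transform in $\xi$. To pass from $\R_t$ to $[0,1]$ and recover the boundary data, the paper introduces a smooth time cutoff $\eta_\varepsilon$ supported in $(\varepsilon,1-\varepsilon)$ and lets $\varepsilon\to 0$: the commutator $\eta_\varepsilon'\,v$ converges (in $L^1_t$) to point masses at $t=0,1$, and controlling it in $L^\infty_x L^2_{yt}$ via the Sobolev embedding $H^1_x\hookrightarrow L^\infty_x$ is exactly what produces the $J^3$ and the factor $(1+\lambda^2)\le \lambda^2+\beta^2$. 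Finally, the weight $e^{\lambda|x|}e^{\beta|y|}$ is obtained by running the argument for all four sign pairs $(j,k)$ --- not by inserting $\operatorname{sgn}$ into a symbol, which is not a well--defined Fourier--side operation.

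\textbf{Stage 2.} Your description (``Plancherel in $(y,t)$ after Fourier localization in $x$'', ``Kato smoothing for the Airy group'') is in the right spirit but does not identify the mechanism that keeps the constant on the forcing term independent of $\lambda,\beta$. The crucial point is the explicit partial--fraction/multiplier computation above; a generic Kato--smoothing argument for the unweighted Airy flow would not by itself tell you how the estimate behaves under conjugation by $e^{\lambda x+\beta y}$.
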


Before proving Lemma~\ref{lemma2.1} we introduce the following notations.
\begin{equation}\label{H_lb}
	H_{\lambda, \beta}\cdot:= e^{\lambda x} e^{\beta y} (\partial_t + \partial_x^3 + \partial_y^3) e^{-\lambda x} e^{-\beta y}\cdot =\big[\partial_t + (\partial_x - \lambda)^3 + (\partial_y -\beta)^3 \big]\cdot.
\end{equation}
It is easy to see from the previous definition that $H_{\lambda, \beta}$ is defined through the space-time Fourier transform by the multiplier 
\begin{equation*}
	i \tau  + (i \xi - \lambda)^3 + (i \eta - \beta)^3.
\end{equation*}
We can define the inverse operator $T_0$ of $H_{\lambda, \beta}$ by the symbol
\begin{equation}\label{symbol}
	m_0(\xi, \eta, \tau):= \frac{1}{i \tau  + (i \xi - \lambda)^3 + (i \eta - \beta)^3},
\end{equation}
this means that 
\begin{equation*}
	\widehat{T_0\, h}:= m_0(\xi, \eta, \tau) \widehat{h},
\end{equation*}
where, in order to simplify the notation, we use\, $\widehat{ }\,$ to denote the Fourier transform in $S'(\R^3).$

The proof of Lemma~\ref{lemma2.1} is based on two previous lemmas, these lemmas express respectively the boundedness of the operator $T_0$ and $(\partial_x - \lambda)^k (\partial_y - \beta)^l T_0$ where $k,l$ are non negative integers with $0\leq k+l\leq 2$ (actually we need just the decoupled options, that is $(k,l)=(0,0), (1,0), (0,1), (2,0)$ and $(0,2)$).
 
\begin{lemma}\label{boundedness_T0}
	Let $h\in L^1(\R^3)$ with $\norm{h}_{L_t^1 L_{x y}^2}(\R^3)<\infty.$ Then for all $(\lambda, \beta)\neq (0,0),$ $m_0 \widehat{h} \in S'(\R^3)$ and $[m_0 \widehat{h}]\invf{}$ defines a bounded function from $\R_t$ with values in $L_{x y}^2.$ Besides,
	\begin{equation}\label{boundedness}
		\norm{[m_0 \widehat{h}]\invf{}\,(t)}_{L_{x y}^2(\R^2)}\leq \norm{h}_{L_t^1 L_{x y}^2(\R^3)} \quad \forall t \in \R,
	\end{equation}
	where $\invf{}$ denotes the inverse Fourier transform in $S'(\R^3)$.
	\begin{remark}
		Clearly the previous inequality gives the boundedness of the operator $T_0$ indeed, by its definition, from~\eqref{boundedness} follows that
		\begin{equation*}
			\norm{[T_0 h](t)}_{L_{x y}^2(\R^2)}\leq \norm{h}_{L_t^1 L_{x y}^2(\R^3)}  \quad \forall t \in \R.
		\end{equation*}
	\end{remark}
		\begin{proof}
			First of all we want to write the symbol $m_0(\xi, \eta, \tau)$ in a more useful way, precisely it is not difficult to see that the following holds:
			\begin{equation*}
				m_0(\xi,\eta,\tau)=\frac{-i}{\tau + a(\xi,\eta) + i b(\xi,\eta)},
			\end{equation*}
			where 
			\begin{equation*}
				 a(\xi,\eta)=-\xi^3 +3\xi \lambda^2 -\eta^3 + 3 \eta \beta^2 \qquad \text{and} \qquad
				 b(\xi,\eta)= \lambda^3 -3\xi^2\lambda +\beta^3 -3\eta^2\beta.
			\end{equation*}
			Before going any further we want to recall some useful properties of the Fourier transform.
			\begin{remark}\label{anti_f}
				Our definition for the $1$-dimensional Fourier transform is
				\begin{equation}\label{Fourier1d}
					\widehat{f}(\tau)= \frac{1}{\sqrt{2\pi}} \int_{\R} e^{-i\tau t} f(t)\, dt.
				\end{equation}
				Making a straightforward computation it is not difficult to see that, defining
				\begin{equation*}
					g(\tau)=\frac{-i}{\tau + i b}, \qquad b\neq 0,
				\end{equation*}
				the inverse Fourier transform of $g$ has this form
				\begin{equation}\label{inverse_g}
					\widecheck{g}(t)=
														\begin{system}
															\phantom{-}\sqrt{2\pi}\, \chi_{(0, + \infty)}(t) e^{tb} \quad b<0, \vspace{+0.3cm} \\
															-\sqrt{2\pi}\, \chi_{(- \infty,0)}(t) e^{tb} \quad b>0,
													  \end{system}
				\end{equation}
				where, as usual, for a set $A,$ $\chi_A$ denotes the characteristic function of $A.$
				
				Considering the translation by the real number $a$ of $g,$ that is defining $G(\tau)=g(\tau + a),$ from~\eqref{inverse_g} and the property that the translation in the moment space is a multiplication by a phase factor in the position space and vice-versa, in other words
				\begin{equation*}
					\widecheck{g}(\cdot + a)(t) = e^{-ita} \widecheck{g}(t),
				\end{equation*}
				one has
				\begin{equation*}
					\widecheck{G}(t)=
																				\begin{system}	
																							\phantom{-}\sqrt{2\pi}\, \chi_{(0, + \infty)}(t) e^{tb} e^{-ita} \quad b<0, \vspace{+0.3cm} \\
															-\sqrt{2\pi}\, \chi_{(- \infty,0)}(t) e^{tb} e^{-ita} \quad b>0.																	
																				\end{system}
				\end{equation*}
			\end{remark}
			
			With the previous remark in mind we can say that for a fixed pair $(\xi, \eta)$ with $b(\xi, \eta)\neq 0$ and $t\in \R$ we have
			\begin{equation*}
					[m_0(\xi, \eta, \cdot_\tau)]\invt{}(t)=
															\begin{system}
							\phantom{-}\sqrt{2\pi}\, \chi_{(0, + \infty)}(t) e^{tb(\xi,\eta)} e^{-ita(\xi, \eta)} \quad b(\xi,\eta)<0, \vspace{+0.3cm} \\
															-\sqrt{2\pi}\, \chi_{(- \infty,0)}(t) e^{tb(\xi,\eta)} e^{-ita(\xi,\eta)} \quad b(\xi,\eta)>0.
															\end{system}
			\end{equation*}
			Clearly the magnitude of the right-hand side is bounded by $\sqrt{2\pi}.$
			
			Now we need to compute the quantity $\big[ m_0(\xi, \eta, \cdot_\tau) \widehat{h}(\xi, \eta, \cdot_\tau)\big]\invt{}(t).$
			
			In order to do that we recall that under our definition of the Fourier transform~\eqref{Fourier1d} and its inverse, the following property holds:
			\begin{equation*}
				\widecheck{f g}(t)=\frac{\widecheck{f}(t) \ast \widecheck{g}(t)}{\sqrt{2\pi}},
			\end{equation*}
			moreover using that $\widehat{h}=h\fxy{} \ft{},$ one easily obtains
			\begin{equation*}
				\begin{split}
					\big[ m_0(\xi, \eta,\cdot_\tau) \widehat{h}(\xi, \eta, \cdot_\tau)\big]\invt{}(t)
					  &= \frac{[m_0(\xi, \eta, \cdot_\tau)]\invt{}(t) \ast h(\cdot_x, \cdot_y, t)\fxy{}(\xi,\eta)}{\sqrt{2\pi}}\\
																			 &=\frac{1}{\sqrt{2\pi}} \int_{\R_s} [m_0(\xi, \eta, \cdot_\tau)]\invt{}(t-s) h(\cdot_x, \cdot_y, s)\fxy{}(\xi,\eta) \, ds\\
																			 &=
																				\begin{system}
																				\ \ \ \int_{\R_s} \chi_{(0, + \infty)}(t-s) e^{(t-s) b(\xi,\eta)} e^{-i (t-s)a(\xi,\eta)} h(\cdot_x, \cdot_y, s)\fxy{}(\xi,\eta) \, ds \quad b(\xi,\eta)<0, \vspace{+0.3cm} \\			
																				- \int_{\R_s} \chi_{(- \infty,0)}(t-s) e^{(t-s) b(\xi,\eta)} e^{-i (t-s)a(\xi,\eta)} h(\cdot_x, \cdot_y, s)\fxy{}(\xi,\eta) \, ds \quad b(\xi,\eta)>0.
																				\end{system}
				\end{split}
			\end{equation*}
			Let us observe that for $(\lambda, \beta)\neq (0,0)$ since the set $\{(\xi, \eta)\colon b(\xi,\eta)=0\}$ represents an ellipse, it has measure zero in $\R^2,$ this gives, by applying Plancherel's formula and Minkowski's integral inequality, that for all $t\in \R$
			\begin{equation*}
				\begin{split}
					\norm{[m_0 \widehat{h}]\invf{}\,(\cdot_x, \cdot_y, t)}_{L_{x y}^2(\R^2)}&=\norm{[m_0 \widehat{h}] \invt{}(\cdot_\xi, \cdot_\eta, t)}_{L_{\xi \eta}^2(\R^2)} \leq \int_{\R_s} \norm{h\fxy{}(\cdot_\xi, \cdot_\eta, s)}_{L_{\xi \eta}^2(\R^2)}\\
															&= \norm{h(\cdot_x, \cdot_y, \cdot_t)}_{L_t^1 L_{x y}^2(\R^3)}<\infty. 
				\end{split}
			\end{equation*}
		\end{proof}
\end{lemma}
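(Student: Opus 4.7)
My approach is to exhibit $T_0$ as a temporal convolution whose kernel (parametrized by the spatial frequencies $(\xi,\eta)$) is pointwise uniformly bounded by $\sqrt{2\pi}$, and then to combine Plancherel in the spatial variables with Minkowski's integral inequality in time. Since the target bound has constant exactly $1$, the bookkeeping with Fourier normalizations must be done carefully.

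First, I would separate the real and imaginary parts of the denominator of $m_0$. Expanding
\begin{equation*}
(i\xi-\lambda)^3+(i\eta-\beta)^3=ia(\xi,\eta)-b(\xi,\eta),
\end{equation*}
with
\begin{equation*}
a(\xi,\eta)=-\xi^3+3\xi\lambda^2-\eta^3+3\eta\beta^2,\qquad b(\xi,\eta)=\lambda^3-3\xi^2\lambda+\beta^3-3\eta^2\beta,
\end{equation*}
gives $m_0(\xi,\eta,\tau)=-i/(\tau+a(\xi,\eta)+ib(\xi,\eta))$. The key point is that $b$ does not depend on $\tau$, and since $(\lambda,\beta)\ne(0,0)$ the level set $\{b=0\}$ is a conic (possibly empty) of two-dimensional Lebesgue measure zero in $\R^2$. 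Off this null set, the symbol has no real poles in $\tau$, which is what allows $m_0\widehat{h}$ to be interpreted as a tempered distribution on $\R^3$ (using also that $h\in L^1(\R^3)$, so $\widehat{h}$ is bounded and continuous).

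Next, for each $(\xi,\eta)$ with $b(\xi,\eta)\ne 0$, I would compute the temporal inverse Fourier transform of $m_0$ explicitly. Shifting $\tau\mapsto\tau-a(\xi,\eta)$ reduces the problem to inverse-transforming $-i/(\tau+ib)$, and closing a semicircular contour in the upper or lower half-plane according to the signs of $t$ and $b$ gives, by residues,
\begin{equation*}
\invt{m_0}(\xi,\eta,t)=\pm\sqrt{2\pi}\,\chi_{\pm}(t)\,e^{tb(\xi,\eta)}\,e^{-ita(\xi,\eta)},
\end{equation*}
where the sign and the one-sided characteristic function are determined by the sign of $b(\xi,\eta)$. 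The decisive feature is the uniform pointwise bound $\abs{\invt{m_0}(\xi,\eta,t)}\le\sqrt{2\pi}$ for almost every $(\xi,\eta)$ and every $t\in\R$.

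Finally, writing $\widehat{h}=h\fxy{}\,\ft{}$, the inverse temporal Fourier transform of the product $m_0\widehat{h}$ is, at each spatial frequency, a convolution in $s$:
\begin{equation*}
[m_0\widehat{h}]\invt{}(\xi,\eta,t)=\frac{1}{\sqrt{2\pi}}\int_{\R}\invt{m_0}(\xi,\eta,t-s)\,h\fxy{}(\xi,\eta,s)\,ds.
\end{equation*}
Taking the $L^2_{\xi\eta}$ norm at fixed $t$, applying Minkowski's integral inequality in $s$, inserting the pointwise bound $\abs{\invt{m_0}}\le\sqrt{2\pi}$ (which cancels the $1/\sqrt{2\pi}$ prefactor exactly), and invoking Plancherel in $(x,y)$ to switch from $L^2_{\xi\eta}$ back to $L^2_{xy}$ yields
\begin{equation*}
\norm{[m_0\widehat{h}]\invf{}(t)}_{L^2_{xy}(\R^2)}\le\int_{\R}\norm{h(\cdot,\cdot,s)}_{L^2_{xy}(\R^2)}\,ds=\norm{h}_{L^1_tL^2_{xy}(\R^3)},
\end{equation*}
which is the required estimate. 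The only real obstacle is bookkeeping: tracking the $\sqrt{2\pi}$ factors to obtain constant exactly $1$, and making sure that the measure-zero singular set $\{b=0\}$ causes no distributional difficulty.
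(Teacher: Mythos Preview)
Your proposal is correct and follows essentially the same approach as the paper: rewrite $m_0$ as $-i/(\tau+a+ib)$, compute its temporal inverse Fourier transform explicitly (bounded by $\sqrt{2\pi}$ off the measure-zero set $\{b=0\}$), then combine the convolution formula with Minkowski in $s$ and Plancherel in the spatial variables. The only cosmetic differences are that you invoke contour integration where the paper simply writes down the answer, and you say ``conic'' where the paper says ``ellipse''.
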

As previously anticipated, we are going to prove the boundedness of the operator $(\partial_x - \lambda)^k (\partial_y - \beta)^l T_0.$ precisely, we will prove the following lemma
\begin{lemma}\label{boundedness_der_t0}
	Let $h \in L^1(\R^3)$ with $\norm{h}_{L_x^1 L_{y t}^2}(\R^3)<\infty.$ For $\beta\geq 1,$ $\lambda\geq 1,$ $k,l \in \{0,1,2\},$ and $0\leq k+l\leq 2,$ let
	\begin{equation*}
		m_{k,l}(\xi, \eta, \tau):= (i\xi-\lambda)^k (i\eta -\beta)^l m_0(\xi,\eta,\tau),
	\end{equation*}
	with $m_0$ as in~\eqref{symbol}, the symbol associated with the operator $(\partial_x - \lambda)^k (\partial_y - \beta)^l\, T_0.$ Then $m_{k,l} \widehat{h} \in S'(\R^3)$ and 
	\begin{equation*}
		\norm{[m_{k,l} \widehat{h}]\invf\, (t)}_{L_x^\infty L_{t y}^2(\R^3)}\leq \norm{h}_{L_x^1 L_{y t}^2(\R^3)}.
	\end{equation*}
	\begin{remark}
	As in Lemma~\ref{boundedness_T0}, from the previous inequality we can conclude the boundedness of the operator $(\partial_x -\lambda)^k (\partial_y - \beta)^l T_0,$ indeed the as a trivial consequence we have
	\begin{equation*}
		\norm{[(\partial_x - \lambda)^k (\partial_y - \beta)^l T_0] h}_{L_x^\infty L_{t y}^2(\R^3)}\leq C \norm{h}_{L_x^1 L_{y t}^2(\R^3)}.
	\end{equation*}
	\end{remark}
 	\begin{proof}
		We will only consider the case $k=2$ and $l=0.$ Since the proofs of other cases are similar, for brevity, we will omit them. First of all let us note that 
		\begin{equation*}
			m_{2,0}(\xi, \eta, \tau)= \frac{-i (\xi +i \lambda)^2}{ [(\xi + i \lambda)^3 + (\eta + i \beta)^3 - \tau]}.
		\end{equation*}
		Defining $v:= \xi+ i \lambda$ and $w:= \eta + i \beta$ we can re-write the preceding as
		\begin{equation*}
			m_{2,0}(\xi, \eta, \tau)= \frac{-i v^2}{v^3 + w^3 - \tau}.
		\end{equation*}
     The polynomial $P(v):=v^3 + w^3 - \tau$ has got, as a multiple root, just $v=0,$ but since under our hypothesis $v$ is always different from zero, we can assume $P(v)$ not to have multiple roots. This allows us to use the following decomposition in partial fractions
						\begin{equation*}
							m_{2,0}= \sum_{j=1}^3 \frac{-i v_j^2}{3(v-v_j) v_j^2}= \sum_{j=1}^3 \frac{-i}{3(\xi - \Re(v_j) + i[\lambda - \Im(v_j)])}= \frac{1}{3} \sum_{j=1}^3 \frac{-i}{\xi + a_j(\eta, \tau) + i b_j(\eta, \tau)}
						\end{equation*}
						where $v_j,$ $j=1,2,3$ are the different roots of $P, a_j(\eta, \tau)= -\Re(v_j)$ and $b_j(\eta, \tau)= \lambda - \Im(v_j).$
						Moving on as in Lemma~\ref{boundedness_T0}, that is using the Remark~\ref{anti_f},  for a fixed pair $(\eta, \tau)$ such that $b(\eta, \tau)\neq 0,$ making use of the linearity of the inverse Fourier transform we have
						\begin{equation*}
							[m_{2,0}(\cdot_\xi, \eta, \tau)]\invx{} \, (x)= \begin{system}
				 \phantom{-}\frac{1}{3}\sum_{j=1}^3 \sqrt{2\pi}\, \chi_{(0, + \infty)}(x) e^{x b_j(\eta, \tau)} e^{-ix a_j(\eta, \tau)} \quad b_j(\eta, \tau)<0, \vspace{+0.3cm} \\
															-\frac{1}{3}\sum_{j=1}^3 \sqrt{2\pi}\, \chi_{(- \infty,0)}(x) e^{xb_j(\eta, \tau)} e^{-ixa_j(\eta, \tau)} \quad b_j(\eta, \tau)>0. 				
				\end{system}
						\end{equation*}
						Clearly the magnitude of the right-hand side is bounded by $\sqrt{2\pi}.$
						
						Let us observe that the set $\{(\eta, \tau)\colon \Im(v_j) - \lambda=0\}$ has two-dimensional measure zero. Therefore using similar computations to those performed in Lemma~\ref{boundedness_T0} we get that for all $x \in \R$
						\begin{equation*}
							\begin{split}
								\norm{[m_{2,0}\widehat{h}] \invf{}\, (x, \cdot_y, \cdot_t)}_{L_{y t}^2(\R^2)}&=\norm{[m_{2,0}\widehat{h}] \invx{}\, (x, \cdot_\eta, \cdot_\tau)}_{L_{\eta \tau}^2(\R^2)}\leq \int_{\R_z} \norm{h\fyt{}\,(z, \cdot_\eta, \cdot_\tau)}_{L_{\eta \tau}^2(\R^2)}\\
																				 &=\norm{h(\cdot_x, \cdot_y, \cdot_t)}_{L_x^1 L_{y t}^2(\R^3)}< \infty.
							\end{split}
						\end{equation*}
	\end{proof}
\end{lemma}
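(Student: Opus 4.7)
The approach I would follow is to reduce the claimed mixed-norm bound, via Plancherel in $(y,t)$ and Minkowski's integral inequality in $x$, to a uniform pointwise bound on the one-dimensional inverse Fourier transform of the symbol $m_{k,l}$ in the $\xi$-variable. Concretely, for each fixed $x\in\R$,
\[
\norm{[m_{k,l}\widehat{h}]\invf{}\,(x,\cdot,\cdot)}_{L^2_{y t}(\R^2)}
=\norm{[m_{k,l}\widehat{h}]\invx{}\,(x,\cdot,\cdot)}_{L^2_{\eta\tau}(\R^2)}
\]
by Plancherel in $(y,t)$, and writing the right-hand side as the $L^2_{\eta\tau}$-norm of the $x$-convolution (pointwise in $(\eta,\tau)$) of the kernel $K(x,\eta,\tau):=[m_{k,l}(\cdot,\eta,\tau)]\invx{}\,(x)$ with the partial Fourier transform $h\fyt{}\,(\cdot,\eta,\tau)$, Minkowski in $x$ bounds this by $\norm{K}_{L^\infty_{x,\eta,\tau}}\,\norm{h}_{L^1_x L^2_{y t}}$. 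Thus the whole argument reduces to showing the uniform kernel bound $\norm{K}_{L^\infty}\leq C$ for a.e.\ $(\eta,\tau)$.

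To estimate $K$, I would freeze $(\eta,\tau)$ and regard the denominator $D(\xi):=i\tau+(i\xi-\lambda)^3+(i\eta-\beta)^3$ as a cubic polynomial in $\xi$. The condition $\lambda\geq 1$ guarantees that the set of $(\eta,\tau)$ for which $D$ has a real root is a real-algebraic subvariety of codimension one, hence of two-dimensional Lebesgue measure zero; outside this exceptional set the three roots $\xi_j=\xi_j(\eta,\tau)$ are distinct with $\mathrm{Im}\,\xi_j\neq 0$. Since the numerator $(i\xi-\lambda)^k(i\eta-\beta)^l$ has degree at most $k+l\leq 2 < 3=\deg D$ in $\xi$, the partial-fraction expansion
\[
m_{k,l}(\xi,\eta,\tau)=\sum_{j=1}^3\frac{C_j(\eta,\tau)}{\xi-\xi_j(\eta,\tau)}
\]
is valid. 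Combining this with the explicit one-dimensional formula recorded in Remark~\ref{anti_f}, which says that the $\xi$-inverse-Fourier transform of $1/(\xi-\xi_j)$ is $\pm i\sqrt{2\pi}\,\chi_{I_j}(x)\,e^{-ix\xi_j}$ with $I_j$ the half-line on which $|e^{-ix\xi_j}|\leq 1$, yields
\[
\norm{K(\cdot,\eta,\tau)}_{L^\infty_x}\leq \sqrt{2\pi}\sum_j |C_j(\eta,\tau)|,
\]
and reduces the lemma to a uniform bound on the partial-fraction residues.

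In the cleanest case $(k,l)=(2,0)$, changing variable to $v=i\xi-\lambda$ turns $D$ into $v^3+c$ with $c=(i\eta-\beta)^3+i\tau$; the three roots satisfy the cube-root relation $v_j^3=-c$, and the residues of $v^2/(v^3+c)$ collapse to the universal constant $1/3$, giving the uniform bound with no dependence on $(\eta,\tau)$. For the remaining pairs one may either exploit the $x\leftrightarrow y$ symmetry of the leading operator $\partial_t+\partial_x^3+\partial_y^3$ to run the same argument with $\eta$ as the Fourier variable, or reduce to Lemma~\ref{boundedness_T0} for the borderline case $(0,0)$. The main obstacle I anticipate is exactly the uniform control of $|C_j(\eta,\tau)|$ for the intermediate pairs $(k,l)$ with $k+l<2$: the naive residue formulas pick up factors of $1/v_j^{2-k-l}$ that become singular on the curve $c=0$ where the roots of the cubic collapse to the origin. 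A judicious regrouping of the three residues (so that cancellations between them keep the combined kernel bounded), combined with the symmetry-based switch of Fourier variable whenever one decomposition is unfavorable, should yield the required uniform bound on $K$, and then the Plancherel/Minkowski reduction of the first paragraph closes the proof.
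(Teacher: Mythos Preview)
Your approach is essentially identical to the paper's: reduce via Plancherel in $(y,t)$ and Minkowski in $x$ to a uniform $L^\infty_x$ bound on the $\xi$-inverse Fourier transform of $m_{k,l}$, then expand in partial fractions in $\xi$ and invoke the elementary kernel formula of Remark~\ref{anti_f}. The paper in fact only writes out the case $(k,l)=(2,0)$---where, exactly as you observe, the residues of $v^2/(v^3+c)$ are the universal constant $1/3$---and dismisses the remaining cases as ``similar''; your caution that the residues for $k+l<2$ are \emph{not} individually uniformly bounded is well placed, and the cancellation/regrouping you propose (together with the separate Lemma~\ref{boundedness_T0} for $(0,0)$) is precisely the extra work those cases require.
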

Now we are in position to prove Lemma~\ref{lemma2.1}. Even if the proof of this lemma is similar to the one for the corresponding result in~\cite{B_I_M}, we will provide it for sake of completeness.  
\begin{proof}[Proof of Lemma~\ref{lemma2.1}] The proof of estimate~\eqref{no_deriv} follows from Lemma~\ref{boundedness_T0} and the proof of~\eqref{deriv} follows from Lemma~\ref{boundedness_der_t0}. We only prove the estimate~\eqref{deriv} for $L=\partial_x^2.$

For $\varepsilon \in (0, \frac{1}{4})$ let $\eta_\varepsilon$ be a function in $C_0^\infty(\R)$ of the time variable $t$ such that $\eta_\varepsilon(t)=1$ if $t\in [2\varepsilon, 1- 2\varepsilon],$ $\supp \eta_\varepsilon \subset [\varepsilon, 1- \varepsilon],$ $\eta_\varepsilon$ increasing in $[\varepsilon, 2\varepsilon]$ and decreasing in $[1-2\varepsilon, 1 - \varepsilon].$ 
Let us define for all $t \in \R$
\begin{equation*}
	w_\varepsilon(t):= \eta_\varepsilon(t) w(t),
\end{equation*} 
where with an abuse of notation $w$ represents the extension of $w$ which is identically zero outside $[0,1].$ We define
\begin{equation*}
	h_\varepsilon:= e^{\lambda x} e^{\beta y} (\partial_t + \partial_x^3 + \partial_y^3) w_\varepsilon,
\end{equation*}
then, more explicitly 
\begin{equation}\label{h_eps}
	h_\varepsilon= \eta_\varepsilon' e^{\lambda x} e^{\beta y} w + h_0,
\end{equation}
where
\begin{equation*}
	h_0:= \eta_\varepsilon e^{\lambda x} e^{\beta y} (\partial_t + \partial_x^3 + \partial_y^3) w.
\end{equation*}
It is not difficult to see that $h_\varepsilon$ can be re-written as
\begin{equation*}
	h_\varepsilon= [e^{\lambda x} e^{\beta y} (\partial_t + \partial_x^3 + \partial_y^3) e^{-\lambda x} e^{-\beta y}] e^{\lambda x} e^{\beta y} w_\varepsilon= H_{\lambda, \beta} (e^{\lambda x}e^{\beta y} w_\varepsilon).
\end{equation*}
This means that 
\begin{equation*}
	e^{\lambda x} e^{\beta y} w_\varepsilon=T_0 h_\varepsilon=[m_0 \widehat{h_\varepsilon}]\invf{}.
\end{equation*}
Now we consider $e^{\lambda x} e^{\beta y}\, \partial_x^2 w_\varepsilon.$ It is easy to see that
\begin{equation*}
	e^{\lambda x} e^{\beta y} \partial_x^2 w_\varepsilon=(e^{\lambda x} e^{\beta y}\, \partial_x^2\, e^{-\lambda x} e^{- \beta y}) e^{\lambda x} e^{\beta y} w_\varepsilon=(\partial_x - \lambda)^2 e^{\lambda x} e^{\beta y} w_{\varepsilon}=(\partial_x - \lambda)^2 T_0 h_\varepsilon= [m_{2,0} \widehat{h_\varepsilon}] \invf{}.
\end{equation*}
From the previous identity and~\eqref{h_eps}, one gets
\begin{equation}\label{first_estimate}
	\begin{split}
		\norm{e^{\lambda x} e^{\beta y} \partial_x^2 w_\varepsilon}_{L_x^\infty L_{y t}^2}&=\norm{[m_{2,0} \widehat{h_\varepsilon}]\invf{}\,\,}_{L_x^\infty L_{y t}^2}\\
																	 &\leq \norm{\chi_{[0,1]}(\cdot_t)[m_{2,0} (\eta_\varepsilon' e^{\lambda x} e^{\beta y} w)\dirf{}\,\,]\invf{}\,\,}_{L_x^\infty L_{y t}^2} + \norm{[m_{2,0} \widehat{h_0}]\invf{}\,\,}_{L_x^\infty L_{y t}^2}. 
 \end{split}																		
\end{equation}
First of all let us consider the second term on the right-hand side, using the hypotheses of Lemma~\ref{lemma2.1} we can apply Lemma~\ref{boundedness_der_t0} to $h_0,$ this gives
\begin{equation}\label{second_term}
	\norm{[m_{2,0} \widehat{h_0}]\invf{}\,\,}_{L_x^\infty L_{y t}^2}\leq \norm{h_0}_{L_x^1 L_{y t}^2}. 
\end{equation}
Now we need to provide an estimate for the first term on the right-hand side of~\eqref{first_estimate}. Using our definition of $m_{k, l}(\xi,\eta, \tau)$ we get
\begin{equation*}
	\begin{split}
		\norm{\chi_{[0,1]}(\cdot_t)[m_{2,0} (\eta_\varepsilon' e^{\lambda x} e^{\beta y} w)\dirf{}\,\,]\invf{}\,\,}_{L_x^\infty L_{y t}^2} &= \norm{\chi_{[0,1]}(\cdot_t)[-(\xi + i \lambda)^2 m_0 (\eta_\varepsilon' e^{\lambda x} e^{\beta y} w)\dirf{}\,\,]\invf{}\,\,}_{L_x^\infty L_{y t}^2}\\
																		&= \norm{\chi_{[0,1]}(\cdot_t)[m_0 \widehat{g}] \invf{}\,\,}_{L_x^\infty L_{y t}^2},
	\end{split}
\end{equation*}
where $\widehat{g}= -(\xi + i \lambda)^2 (\eta_\varepsilon' e^{\lambda x} e^{\beta y} w)\dirf{}\,\,.$

For a fixed pair $(y, t) \in \R^2$ one has
\begin{equation*}
	\begin{split}
		\norm{\chi_{[0,1]}(t)[m_0 \widehat{g}]\invf{}\,(\cdot_x, y,t)}_{H_x^1}&= \norm{(1 + (\cdot_\xi)^2)^\frac{1}{2} \chi_{[0,1]}(t)[m_0 \widehat{g}] \invet{}(\cdot_\xi, y,t)}_{L_\xi^2}\\
					&=\norm{(1 + (\cdot_\xi)^2)^\frac{1}{2} (\cdot_\xi + i\lambda)^2 \chi_{[0,1]}(t) [m_0 (\eta_\varepsilon' e^{\lambda x} e^{\beta y} w)\dirf{}\,\,]\invet{}(\cdot_\xi, y,t)}_{L_\xi^2}.
	\end{split}
\end{equation*}
Since
\begin{equation*}
	(1 + \xi^2)^\frac{1}{2} \abs{\xi + i\lambda}^2 \leq (1+ \xi^2)^\frac{1}{2} (1 + \xi^2 + \lambda^2) \leq (1 + \xi^2)^\frac{3}{2} (1+ \lambda^2)
\end{equation*}
we obtain
\begin{equation*}
	\norm{\chi_{[0,1]}(t)[m_0 \widehat{g}]\invf{}\,(\cdot_x, y,t)}_{H_x^1}\leq (1+ \lambda^2)\norm{J_x^3\,\chi_{[0,1]}(t)[m_0 (\eta_\varepsilon' e^{\lambda x} e^{\beta y} w)\dirf{}\,\,]\invf{}\,\,(\cdot_x, y,t)}_{L_x^2}.
\end{equation*}
\begin{remark}
	We emphasize that here $J_x^3$ denotes the operator defined through the Fourier transform just in the $x$ variable by
	\begin{equation*}
		\widehat{J_x^3g}(\xi):= (1 + \xi^2)^\frac{3}{2} \widehat{g}(\xi).
	\end{equation*}
\end{remark}
Now, using that $H_x^1(\R) \hookrightarrow L_x^\infty(\R)$ we have
\begin{equation*}
	\begin{split}
		\abs{\chi_{[0,1]}(t)[m_0 \widehat{g}]\invf{}\,\,(x,y,t)}&\leq c \norm{\chi_{[0,1]}(t)[m_0 \widehat{g}]\invf{}\,(\cdot_x, y,t)}_{H_x^1}\\
						&\leq c(1+ \lambda^2)\norm{J_x^3\chi_{[0,1]}(t)[m_0 (\eta_\varepsilon' e^{\lambda x} e^{\beta y} w)\dirf{}\,\,]\invf{}\,\,(\cdot_x, y,t)}_{L_x^2}.
	\end{split}
\end{equation*}
Therefore, for $x\in \R,$ by virtue of Lemma~\ref{boundedness_T0} one obtains
\begin{equation}\label{first_term}
	\begin{split}
		\norm{ \chi_{[0,1]}(\cdot_t) [m_0 \widehat{g}]\invf{}\,\,(x, \cdot_y, \cdot_t)}_{L_{y t}^2}&\leq c (1+ \lambda^2)\norm{J_x^3\chi_{[0,1]}(\cdot_t)[m_0 (\eta_\varepsilon' e^{\lambda x} e^{\beta y} w)\dirf{}\,\,]\invf{}\,\,}_{L^2}\\
			&\leq c (1+ \lambda^2)\norm{J_x^3 [m_0 (\eta_\varepsilon' e^{\lambda x} e^{\beta y} w)\dirf{}\,\,]\invf{}\,\,}_{L_t^\infty L_{x y}^2}\\
			&\leq c (1 + \lambda^2) \norm{(1 + (\cdot_\xi)^2 + (\cdot_\eta)^2)^\frac{3}{2}[m_0(\eta_\varepsilon' e^{\lambda x} e^{\beta y} w)\dirf{}\,\,]\invt{}}_{L_t^\infty L_{\xi \eta}^2}\\
			&= c (1 + \lambda^2) \norm{[m_0(\eta_\varepsilon' J^3(e^{\lambda x} e^{\beta y} w))\dirf{}\,\,]\invf{}\,\,}_{L_t^\infty L_{x y}^2}\\
			&\leq  c (1 + \lambda^2) \norm{\eta_\varepsilon' J^3(e^{\lambda x} e^{\beta y} w)}_{L_t^1 L_{x y}^2}.
	\end{split}
\end{equation}
Now plugging~\eqref{second_term} and~\eqref{first_term} in~\eqref{first_estimate} and using the explicit definition of $h_0,$ it follows that
\begin{equation}\label{final}
	\norm{e^{\lambda x} e^{\beta y} \partial_x^2 w_\varepsilon}_{L_x^\infty L_{y t}^2}\leq c (1 + \lambda^2) \norm{\eta_\varepsilon' J^3(e^{\lambda x} e^{\beta y} w)}_{L_t^1 L_{x y}^2} + \norm{\eta_\varepsilon e^{\lambda x} e^{\beta y}(\partial_t + \partial_x^3 + \partial_y^3) w}_{L_x^1 L_{y t}^2}.
\end{equation}
First of all we want to prove that the left-hand side of~\eqref{final} goes to $\norm{e^{\lambda x} e^{\beta y} \partial_x^2 w}_{L_x^\infty L_{y t}^2}$ as $\varepsilon$ tends to $0^+.$ Since by our hypotheses we are assuming $w(t)$ to be compactly supported, without loss of generality we may suppose $\supp w(t)\subset [-M, M] \times [-M, M]$ for all $t\in [0,1].$ Making use of the fact that $\partial_x^2 w(t)\in H^2(\R^2) \hookrightarrow L^\infty(\R^2),$ we get
\begin{equation*}
	\begin{split}
		\norm{e^{\lambda x} e^{\beta y} \partial_x^2 w_\varepsilon - e^{\lambda x} e^{\beta y} \partial_x^2 w }_{L_x^\infty L_{y t}^2}
		&= \essupp_{x\in [-M, M]}\Big[\int_0^1 \int_{-M}^M e^{2\lambda x} e^{2\beta y} (\eta_\varepsilon(t) -1)^2 (\partial_x^2 w)^2(x,y,t) \,dy\, dt \Big]^\frac{1}{2}\\
														&\leq c e^{\lambda M} e^{\beta M} \norm{\partial_x^2 w}_{C([0,1]; H^2(\R^2))} (2M)^\frac{1}{2} \Big[ \int_0^{2\varepsilon} dt + \int_{1-2\varepsilon}^1 dt \Big]^\frac{1}{2} \xrightarrow[]{\varepsilon \to 0^+} 0.
	\end{split}
\end{equation*}
With respect to the first term of the right-hand side of~\eqref{final} we can show that
\begin{equation*}
	\begin{split}
		\norm{\eta_\varepsilon' J^3 (e^{\lambda x} e^{\beta y} w)}_{L_t^1 L_{x y}^2}&= \int_0^1 \abs{\eta_\varepsilon'(t)} \norm{J^3(e^{\lambda x} e^{\beta y} w(t))}_{L_{x y}^2}\, dt\\
															&=\int_\varepsilon^{2\varepsilon} \eta_\varepsilon'(t) \norm{J^3(e^{\lambda x} e^{\beta y} w(t))}_{L_{x y}^2}\, dt - \int_{1-2\varepsilon}^{1-\varepsilon} \eta_\varepsilon'(t) \norm{J^3(e^{\lambda x} e^{\beta y} w(t))}_{L_{x y}^2} \, dt\\
															&=\int_\varepsilon^{2\varepsilon} \eta_\varepsilon'(t) \big( \norm{J^3(e^{\lambda x} e^{\beta y} w(t))}_{L_{x y}^2} - \norm{ J^3(e^{\lambda x} e^{\beta y} w(0))}_{L_{x y}^2} \big)\,dt\\
															&\quad + \norm{J^3(e^{\lambda x} e^{\beta y} w(0))}_{L_{x y}^2}\\
															&\quad - \int_{1-2\varepsilon}^{1-\varepsilon} \eta_\varepsilon'(t) \big( \norm{J^3(e^{\lambda x} e^{\beta y} w(t))}_{L_{x y}^2} - \norm{J^3(e^{\lambda x} e^{\beta y} w(1))}_{L_{x y}^2} \big)\, dt\\
															&\quad + \norm{J^3(e^{\lambda x} e^{\beta y} w(1))}_{L_{x y}^2},
	\end{split}
\end{equation*}
since $e^{\lambda x} e^{\beta y} w\in C([0,1]; H^3(\R^2)),$ it is easy to see that
\begin{equation*}
	\norm{\eta_\varepsilon' J^3 (e^{\lambda x} e^{\beta y} w)}_{L_t^1 L_{x y}^2} \xrightarrow[]{\varepsilon \to 0^+} \norm{J^3(e^{\lambda x} e^{\beta y} w(0))}_{L_{x y}^2} + \norm{J^3(e^{\lambda x} e^{\beta y} w(1))}_{L_{x y}^2}.
\end{equation*}
Now only the estimate of the second term of the right-hand side of~\eqref{final} is missing. Taking into account that $\supp w \subset [-M, M] \times [-M,M] \times [0,1]$ and using the dominated convergence theorem we can conclude that
\begin{equation*}
	\norm{(\eta_\varepsilon -1) e^{\lambda x} e^{\beta y} (\partial_t + \partial_x^3 + \partial_y^3)w}_{L_x^1 L_{y t}^2}\leq (2M)^\frac{1}{2} e^{\lambda M} e^{\beta M} \norm{(\eta_\varepsilon -1)(\partial_t + \partial_x^3 + \partial_y^3)w}_{L^2}\xrightarrow[]{\varepsilon \to 0^+}0. 
\end{equation*}
Putting all these estimates together and using $\beta \geq 1$ we obtain
\begin{equation}\label{kl1}
	\begin{split}
	\norm{e^{\lambda x} e^{\beta y} \partial_x^2 w}_{L_x^\infty L_{y t}^2}\leq& c (\lambda^2 + \beta^2) \big(\norm{J^3(e^{\lambda x} e^{\beta y} w(0))}_{L^2} + \norm{J^3(e^{\lambda x} e^{\beta y} w(1))}_{L^2} \big)\\
										 &+ \norm{e^{\lambda x} e^{\beta y} (\partial_t + \partial_x^3 + \partial_y^3)w}_{L_x^1 L_{y t}^2}.
	\end{split}
\end{equation}
In order to conclude the proof we need the following remark.

	An equivalent way to write the estimate~\eqref{deriv} is the following
	\begin{equation*}
		\begin{split}
			\norm{e^{j \lambda x} e^{k \beta y} \partial_x^2 w}_{L_x^\infty L_{y t}^2}\leq& c (\lambda^2 + \beta^2) \big(\norm{J^3(e^{j \lambda x} e^{k \beta y} w(0))}_{L^2} + \norm{J^3(e^{j \lambda x} e^{k\beta y} w(1))}_{L^2} \big)\\
										 &+ \norm{e^{j \lambda x} e^{k \beta y} (\partial_t + \partial_x^3 + \partial_y^3)w}_{L_x^1 L_{y t}^2},
		\end{split}
	\end{equation*}
	for $j \in \{-1,1\}$ and $k\in \{-1, 1\}.$
	
	We have already proved the former estimate for $j=k=1.$ Our aim is to show that the other cases follow in a similar way and so omit them. 
	
	The first step we have to perform is to modify the definition of the multipliers $m_0$ and $m_{k, l}$ considering, instead of $(i \xi - \lambda)$ and $(i \eta - \beta),$ the other three possible pairs: $(i \xi + \lambda)$ and $(i\eta + \beta)$ if we want to estimate $\norm{e^{-\lambda x} e^{-\beta y} Lw}_{L_x^\infty L_{y t}^2},$ $(i\xi + \lambda)$ and $(i\eta - \beta)$ if we want to estimate $\norm{e^{-\lambda x} e^{\beta y} Lw}_{L_x^\infty L_{y t}^2},$ $(i \xi - \lambda)$ and $(i\eta + \beta)$ for the estimate of $\norm{e^{\lambda x} e^{-\beta y} Lw}_{L_x^\infty L_{y t}^2}.$
	
	Since in order to prove~\eqref{kl1} we strongly used the estimates in Lemma~\ref{boundedness_T0} and~\ref{boundedness_der_t0}, we would like them to hold also for the modified versions of $m_0$ and $m_{k,l}$ written above.  
	But one can easily see that this is true just revisiting the proof of the two lemmas with the new definitions of $m_0$ and $m_{k,l}.$
		This concludes the proof of our lemma.
\end{proof}

Now we shall extend the result in Lemma~\ref{lemma2.1} to operators as in~\eqref{operator_difference}, namely we prove Lemma~\ref{lemma2.2}.
\begin{proof}[Proof of Lemma~\ref{lemma2.2}]
		From Lemma~\ref{lemma2.1} and using the fact that $\norm{\cdot}_{L^2(\R^2 \times[0,1])} \leq \norm{\cdot}_{L_t^\infty L_{x y}^2(\R^2 \times [0,1])},$ it follows that
		\begin{multline}\label{first_complete}
				\norm{e^{\lambda \abs{x}} e^{\beta \abs{y}} w}_{L^2} \leq \norm{e^{\lambda \abs{x}} e^{\beta \abs{y}} w(0)}_{L^2} + \norm{e^{\lambda \abs{x}} e^{\beta \abs{y}} w(1)}_{L^2}\\
									 + \norm{e^{\lambda \abs{x}}  e^{\beta \abs{y}} (\partial_t + \partial_x^3 + \partial_y^3 + a_1(\partial_x + \partial_y) + a_0)w}_{L_t^1 L_{x y}^2} 
									+ \norm{e^{\lambda \abs{x}} e^{\beta \abs{y}} (a_1(\partial_x + \partial_y) + a_0)w}_{L_t^1 L_{x y}^2},
		\end{multline}
		and
		\begin{multline}\label{second_complete}
				\norm{e^{\lambda \abs{x}} e^{\beta \abs{y}} Lw}_{L_x^\infty\, L_{y t}^2} 
				\leq c\, (\lambda^2 + \beta^2)\big (\norm{J^3(e^{\lambda \abs{x}} e^{\beta \abs{y}} w(0))}_{L^2} + \norm{J^3(e^{\lambda \abs{x}} e^{\beta \abs{y}} w(1))}_{L^2} \big) \\
				+ \norm{e^{\lambda \abs{x}} e^{\beta \abs{y}} (\partial_t + \partial_x^3 + \partial_y^3 + a_1(\partial_x + \partial_y) + a_0)w}_{L_x^1 L_{y t}^2}\\ 
									+ \norm{e^{\lambda \abs{x}} e^{\beta \abs{y}} (a_1(\partial_x + \partial_y) + a_0)w}_{L_x^1 L_{y t}^2}.
	\end{multline}
		We are interested in considering the last terms in the former estimates.
		
		We first see $\norm{e^{\lambda \abs{x}} e^{\beta \abs{y}} (a_1(\partial_x + \partial_y) + a_0)w}_{L_t^1 L_{x y}^2}$ using that $\norm{\cdot}_{L_t^1 L_{x y}^2(\R^2 \times [0,1])} \leq \norm{\cdot}_{L^2(\R^2\times [0,1])},$ we easily obtain
		\begin{equation*}
			\begin{split}
				\norm{e^{\lambda \abs{x}} e^{\beta \abs{y}} (a_1(\partial_x + \partial_y) + a_0)w}_{L_t^1 L_{x y}^2}
				 &\leq \norm{e^{\lambda \abs{x}} e^{\beta \abs{y}} (a_1(\partial_x + \partial_y) + a_0)w}_{L^2}\\
			 &\leq \norm{a_1}_{L_x^2 L_{y t}^\infty} \norm{e^{\lambda \abs{x}} e^{\beta \abs{y}} (\partial_x + \partial_y) w}_{L_x^\infty L_{y t}^2} + \norm{a_0}_{L^\infty} \norm{e^{\lambda \abs{x}} e^{\beta \abs{y}}w}_{L^2}.
			\end{split}
		\end{equation*}
		Let us consider now $\norm{e^{\lambda \abs{x}} e^{\beta \abs{y}} (a_1(\partial_x + \partial_y) + a_0)w}_{L_x^1 L_{y t}^2},$ making use of the H\"older's inequality, one gets
		\begin{equation*}
			\norm{e^{\lambda \abs{x}} e^{\beta \abs{y}} (a_1(\partial_x + \partial_y) + a_0)w}_{L_x^1 L_{y t}^2}\leq \norm{a_1}_{L_x^1 L_{y t}^\infty} \norm{e^{\lambda \abs{x}} e^{\beta \abs{y}}(\partial_x + \partial_y)w}_{L_x^\infty L_{y t}^2} + \norm{a_0}_{L_x^2 L_{y t}^\infty} \norm{e^{\lambda \abs{x}} e^{\beta \abs{y}} w}_{L^2}.
		 \end{equation*}
		Plugging the previous estimates into~\eqref{first_complete} and~\eqref{second_complete} and summing them together we have
		\begin{multline*}
				\norm{e^{\lambda \abs{x}} e^{\beta \abs{y}} w}_{L^2} + \sum_{0< k+ l \leq 2} \norm{e^{\lambda \abs{x}} e^{\beta \abs{y}} \partial_x^k \partial_y^l w}_{L_x^\infty\, L_{y t}^2}
			\leq c\, (\lambda^2 + \beta^2)\big (\norm{J^3(e^{\lambda \abs{x}} e^{\beta \abs{y}} w(0))}_{L^2} + \norm{J^3(e^{\lambda \abs{x}} e^{\beta \abs{y}} w(1))}_{L^2} \big)\\
									 + \norm{e^{\lambda \abs{x}} e^{\beta \abs{y}} (\partial_t + \partial_x^3 + \partial_y^3 + a_1(\partial_x + \partial_y) + a_0)w}_{L_t^1 L_{x y}^2 \cap L_x^1 L_{y t}^2}\\
									 + \norm{a_0}_{L^\infty \cap L_x^2 L_{y t}^\infty} \norm{e^{\lambda \abs{x}} e^{\beta \abs{y}} w}_{L^2}
									+ \norm{a_1}_{L_x^2 L_{y t}^\infty \cap L_x^1 L_{y t}^\infty} \norm{e^{\lambda \abs{x}} e^{\beta \abs{y}} (\partial_x + \partial_y) w}_{L_x^\infty L_{y t}^2}.					
		\end{multline*}
		Under our hypotheses about $a_0$ and $a_1$ we have
		\begin{multline}\label{hiding_norm}
			\norm{e^{\lambda \abs{x}} e^{\beta \abs{y}} w}_{L^2} + \sum_{0< k+ l \leq 2} \norm{e^{\lambda \abs{x}} e^{\beta \abs{y}} \partial_x^k \partial_y^l w}_{L_x^\infty\, L_{y t}^2}
			\leq c\, (\lambda^2 + \beta^2)\big (\norm{J^3(e^{\lambda \abs{x}} e^{\beta \abs{y}} w(0))}_{L^2} + \norm{J^3(e^{\lambda \abs{x}} e^{\beta \abs{y}} w(1))}_{L^2} \big)\\
									 + \norm{e^{\lambda \abs{x}} e^{\beta \abs{y}} (\partial_t + \partial_x^3 + \partial_y^3 + a_1(\partial_x + \partial_y) + a_0)w}_{L_t^1 L_{x y}^2 \cap L_x^1 L_{y t}^2}\\
									+ \frac{1}{2} \Big( \norm{e^{\lambda \abs{x}} e^{\beta \abs{y}} w}_{L^2} + \sum_{0< k+ l \leq 2} \norm{e^{\lambda \abs{x}} e^{\beta \abs{y}} \partial_x^k \partial_y^l w}_{L_x^\infty\, L_{y t}^2} \Big).
		\end{multline}
		Hence, absorbing the last term in the left-hand side, we have
		\begin{multline*}
			\norm{e^{\lambda \abs{x}} e^{\beta \abs{y}} w}_{L^2} + \sum_{0< k+ l \leq 2} \norm{e^{\lambda \abs{x}} e^{\beta \abs{y}} \partial_x^k \partial_y^l w}_{L_x^\infty\, L_{y t}^2}\\
			\leq c\, (\lambda^2 + \beta^2)\big (\norm{J^3(e^{\lambda \abs{x}} e^{\beta \abs{y}} w(0))}_{L^2} + \norm{J^3(e^{\lambda \abs{x}} e^{\beta \abs{y}} w(1))}_{L^2} \big)\\
									 + c\, \norm{e^{\lambda \abs{x}} e^{\beta \abs{y}} (\partial_t + \partial_x^3 + \partial_y^3 + a_1(\partial_x + \partial_y) + a_0)w}_{L_t^1 L_{x y}^2 \cap L_x^1 L_{y t}^2},
		\end{multline*}
		which yields the desired result.
	\end{proof}

\section{Persistence properties}\label{Appendix:persistence}
	In this section we are interested in studying persistence properties for solutions to~\eqref{Z-K_s}  
	
	In general a persistence property in a function space $X$ means that the solution $t\mapsto u(t)$ describes a continuous curve on $X,$ that is $u\in C([0,1]; X).$ 
	
	The theorem we are going to prove can be seen as a two dimensional generalization of the very well known result by Kato~\cite{Kato} for the KdV equation.

\begin{theorem}\label{persistence_properties}
	Let $u\in C([0,1]; H^4(\R^2)) \cap C^1([0,1]; L^2(\R^2))$ be a solution of the equation~\eqref{Z-K_s}.
	\begin{enumerate}[(i)]
		\item	If for all $\beta>0,$ $u(0)\in L^2(e^{2\beta(x+y)}dx\,dy),$ then $u$ is a bounded function from $[0,1]$ with values in $H^3(e^{2\beta(x+y)} dx\,dy)$ for all $\beta>0.$ 
		\item If for all $\beta>0,$ $u(1)\in L^2(e^{-2\beta(x+y)}\, dx dy),$ then $u$ is a bounded function from $[0,1]$ with values in $H^3(e^{-2\beta(x+y)} dx\,dy)$ for all $\beta>0.$
	\end{enumerate}
	
	\medskip
	In particular, if the conditions for $u(0)$ and $u(1)$ given in $(i)$ and $(ii),$ respectively, are satisfied, then $u$ is bounded from $[0,1]$ to $H^3(e^{2\beta \abs{x+y}}dx\,dy).$
\end{theorem}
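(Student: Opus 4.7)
The plan is to adapt the weighted energy method developed in the proof of Theorem~\ref{thm:optimality} to the (time-independent) weight $\phi(x,y)=e^{2\beta(x+y)}$. First, part (ii) reduces to part (i) via the reflection $v(x,y,t):=u(-x,-y,1-t)$. A direct computation shows that $v$ is again a solution of~\eqref{Z-K_s}, and the change of variables $(x,y)\mapsto(-x,-y)$ turns $u(1)\in L^2(e^{-2\beta(x+y)}\,dx\,dy)$ into $v(0)\in L^2(e^{2\beta(x+y)}\,dx\,dy)$; applying (i) to $v$ and reversing the substitution gives (ii). The combined assertion then follows from the pointwise inequality $e^{2\beta|x+y|}\le e^{2\beta(x+y)}+e^{-2\beta(x+y)}$. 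It therefore suffices to establish part (i).

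Fix $\beta>0$. Choose a smooth truncation $\phi_n(x,y)=\rho_n(x+y)$, where $\rho_n:\R\to(0,\infty)$ is smooth, nondecreasing, equal to $e^{2\beta z}$ on $\{z\le n\}$, and constant on $\{z\ge n+1\}$, constructed so that
\[
|\rho_n^{(k)}(z)|\le C\beta^{k}\,\rho_n(z),\qquad k=0,1,2,3,
\]
with $C$ independent of $n$. Since $\phi_n$ is globally bounded, $\int u(t)^2\phi_n\,dx\,dy<\infty$ for every $t\in[0,1]$, which legitimises all integrations by parts below via the $H^4$ regularity of $u$. Multiplying~\eqref{Z-K_s} by $2u\phi_n$ and integrating, the same computation leading to~\eqref{final_optimality} produces
\[
\frac{d}{dt}\int u^2\phi_n \;=\; \int u^2(\partial_x^3+\partial_y^3)\phi_n \;-\; 3\int\big[(\partial_x u)^2\partial_x\phi_n+(\partial_y u)^2\partial_y\phi_n\big] \;+\; \frac{2\cdot 4^{-1/3}}{3}\int u^3(\partial_x+\partial_y)\phi_n.
\]
The middle term is nonpositive because $\rho_n$ is nondecreasing, so it can be dropped. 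Using the pointwise bound $|\partial^\alpha\phi_n|\le C\beta^{|\alpha|}\phi_n$ and the Sobolev embedding $\|u(t)\|_{L^\infty}\le C\|u\|_{C([0,1];H^4)}$, the remaining two terms are controlled by a constant (depending on $\beta$ and $\|u\|_{C([0,1];H^4)}$) times $\int u^2\phi_n$. Gronwall yields $\int u(t)^2\phi_n\le e^{Ct}\int u(0)^2\phi_n$; since $\phi_n\nearrow e^{2\beta(x+y)}$ monotonically as $n\to\infty$, Fatou's lemma gives the $L^2$-weighted persistence $\sup_{t\in[0,1]}\|e^{\beta(x+y)}u(t)\|_{L^2(\R^2)}<\infty$.

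To promote this to $H^3$ persistence, I would iterate the same estimate on the differentiated equations. For $1\le k+l\le 3$, applying $\partial_x^k\partial_y^l$ to~\eqref{Z-K_s} yields
\[
(\partial_t+\partial_x^3+\partial_y^3)\,\partial_x^k\partial_y^l u \;+\; 4^{-1/3}\,u\,(\partial_x+\partial_y)\partial_x^k\partial_y^l u \;=\; G_{k,l}(u),
\]
where $G_{k,l}(u)$ is a finite sum of products of derivatives of $u$ of order at most $k+l+1\le 4$ produced by the Leibniz rule on the nonlinearity. Multiplying by $2\phi_n\,\partial_x^k\partial_y^l u$ and integrating gives the same structural identity as before for $w:=\partial_x^k\partial_y^l u$: the dispersive part contributes the familiar nonpositive $|\nabla w|^2\partial\phi_n$ term, the coefficient $u$ in front of $(\partial_x+\partial_y)w$ is handled by one integration by parts combined with $\|u\|_{L^\infty}$, and the forcing is bounded by Cauchy--Schwarz as $\|G_{k,l}(u)\sqrt{\phi_n}\|_{L^2}\|w\sqrt{\phi_n}\|_{L^2}$, with the first factor controlled by the inductively established weighted norms of lower-order derivatives of $u$. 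Gronwall closes each step, and passing $n\to\infty$ with Fatou at the end delivers (i).

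The principal technical obstacle is the design of $\phi_n$: one must simultaneously enforce global boundedness (needed so that $u^2\phi_n\in L^1$ before any a priori weighted bound is known), $C^3$ smoothness (for three integrations by parts), and the uniform-in-$n$ pointwise bound $|\partial^\alpha\phi_n|\le C\beta^{|\alpha|}\phi_n$ (needed so that the Gronwall constant does not blow up with $n$). This is accomplished by a straightforward variant of the polynomial-plus-exponential truncation used in Theorem~\ref{thm:optimality}, here adapted to an exponential weight; the remaining bookkeeping of Leibniz terms in the inductive step is the classical one from Kato's persistence argument for KdV.
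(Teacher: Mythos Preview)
Your reduction of (ii) to (i) via the reflection $u(-x,-y,1-t)$ and your proof of the $L^2$--weighted persistence with the truncated weight $\phi_n$ are exactly what the paper does (Lemma~\ref{partial_boundedness}).  Where you diverge is in the upgrade from $L^2(e^{2\beta(x+y)})$ to $H^3(e^{2\beta(x+y)})$: the paper does \emph{not} iterate the energy estimate on the differentiated equations, but instead invokes the interpolation inequality of Lemma~\ref{interpolation_persistence},
\[
\norm{J^{\theta s}(e^{(1-\theta)\beta(x+y)}u(t))}_{L^2}\le C\norm{J^s u(t)}_{L^2}^{\theta}\norm{e^{\beta(x+y)}u(t)}_{L^2}^{1-\theta},
\]
with $s=4$, $\theta=3/4$, which immediately combines the unweighted $H^4$ bound with the $L^2$--weighted bound just obtained; the ``for all $\beta$'' hypothesis then lets one rescale $\beta$ and conclude.

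Your iterative route has a genuine gap that forces you back to interpolation anyway.  At level $|\alpha|\ge 1$, Gronwall produces
\[
\int (\partial^\alpha u(t))^2\phi_n \;\le\; e^{Ct}\int (\partial^\alpha u(0))^2\phi_n \;+\; \text{(forcing terms)},
\]
and to pass $n\to\infty$ you need $\partial^\alpha u(0)\in L^2(e^{2\beta(x+y)})$, i.e.\ $u(0)\in H^{3}(e^{2\beta(x+y)})$.  The hypothesis, however, only gives $u(0)\in L^2(e^{2\beta(x+y)})$.  The missing ingredient is precisely an interpolation between $u(0)\in H^4$ and $u(0)\in L^2(e^{2\beta'(x+y)})$ for a larger $\beta'$, which is exactly Lemma~\ref{interpolation_persistence}; and once that lemma is on the table it already delivers the $H^3$--weighted bound for $u(t)$ directly, so the whole iteration becomes superfluous.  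A secondary issue: at $|\alpha|=3$ the energy identity for $w=\partial^\alpha u$ is only formal, since $u\in C([0,1];H^4)$ gives $w\in C([0,1];H^1)$ and $\partial_t w\notin L^2$; this can be repaired by a regularization, but you do not mention it.
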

The proof of Theorem~\ref{persistence_properties} is based on the following lemmas. The first lemma is an interpolation result that can be proved using the three-line theorem. 

\begin{lemma}\label{interpolation_persistence}
	For $s>0$ and $\beta>0$ let $f\in H^s(\R^2)\cap L^2(e^{2\beta(x+y)}\, dx dy).$ Then, for $\theta\in [0,1]$
	\begin{equation*}
		\norm{J^{\theta s}(e^{(1-\theta)\beta(x+y)} f)}_{L^2(\R^2)}\leq c \norm{J^s f}_{L^2(\R^2)}^{\theta} \norm{e^{\beta(x+y)} f}_{L^2(\R^2)}^{1-\theta}
	\end{equation*}
	where $J^s$ is such that $\widehat{J^s g}(\xi, \eta):= (1 + \xi^2 + \eta^2)^\frac{s}{2} \widehat{g}(\xi, \eta).$
\end{lemma}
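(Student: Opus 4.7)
The plan is to apply the Hadamard--Stein three-lines theorem to an analytic family of operators on the closed strip $\Omega = \{z \in \mathbb{C} : 0 \le \Re z \le 1\}$. Define
\[
T_z f := J^{zs}\!\left(e^{(1-z)\beta(x+y)} f\right),
\]
which is an analytic family of operators satisfying $T_0 f = e^{\beta(x+y)} f$, $T_1 f = J^s f$, and $T_\theta f = J^{\theta s}(e^{(1-\theta)\beta(x+y)} f)$, i.e.\ the quantity to be estimated. For fixed $f \in C_c^\infty(\R^2)$ (dense in $H^s \cap L^2(e^{2\beta(x+y)}\,dx\,dy)$) and $g \in \mathcal{S}(\R^2)$, consider the scalar auxiliary function
\[
F(z) := \langle T_z f,\, g \rangle.
\]
A bound for $|F(\theta)|$ in the desired form, followed by duality and density, will yield the lemma.

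First, verify that $F$ is analytic on the interior of $\Omega$ and continuous on $\Omega$, with at most polynomial growth in $|\Im z|$. Analyticity of $z \mapsto (1+|\xi|^2+|\eta|^2)^{zs/2}$ is immediate, and compact support of $f$ makes the multiplication by $e^{(1-z)\beta(x+y)}$ harmless. Next, compute the boundary bounds. On $\Re z = 0$, writing $z = it$, the Fourier multiplier $J^{its}$ has unimodular symbol $(1+|\xi|^2+|\eta|^2)^{its/2}$ and is an $L^2$-isometry, while $e^{-it\beta(x+y)}$ is a pointwise phase, so
\[
|F(it)| \le \|e^{\beta(x+y)} f\|_{L^2}\,\|g\|_{L^2}
\]
uniformly in $t \in \R$. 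On $\Re z = 1$, writing $z = 1+it$, Plancherel combined with the Fourier translation identity $\widehat{e^{-it\beta(x+y)} f}(\xi,\eta) = \widehat{f}(\xi + t\beta,\,\eta + t\beta)$ and the elementary inequality $(1 + |\xi - (t\beta, t\beta)|^2)^{s/2} \le C_s(1+|t|\beta)^s (1+|\xi|^2)^{s/2}$ produces the admissible (polynomial in $|t|$) estimate
\[
|F(1+it)| \le C_s(1 + |t|\beta)^s\,\|J^s f\|_{L^2}\,\|g\|_{L^2}.
\]

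Now invoke Stein's extension of the three-lines theorem, which accommodates boundary bounds growing no faster than $e^{A|\Im z|}$ for some $A < \pi$. Applied to our situation---uniform bound on $\Re z = 0$, polynomial bound on $\Re z = 1$---the conclusion at $z = \theta$ on the real axis is
\[
|F(\theta)| \le c(s,\beta,\theta)\, \|e^{\beta(x+y)} f\|_{L^2}^{1-\theta}\, \|J^s f\|_{L^2}^{\theta}\, \|g\|_{L^2}.
\]
Taking the supremum over $g \in \mathcal{S}(\R^2)$ with $\|g\|_{L^2} \le 1$ and using density to remove the compact-support hypothesis on $f$ completes the proof.

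The main obstacle is precisely the non-uniformity of the bound on $\Re z = 1$: the basic three-lines theorem demands uniform boundary bounds, whereas ours grows like $(1+|t|)^s$. The standard remedy is to multiply $F$ by an auxiliary analytic factor such as $e^{\varepsilon(z-\theta)^2}$, apply the bounded three-lines theorem to the tempered function, and then pass to the limit $\varepsilon \to 0$ via Hirschman's Poisson-integral representation on the strip, which absorbs the polynomial growth into the finite constant $c(s,\beta,\theta)$ appearing in the lemma.
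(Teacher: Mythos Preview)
Your approach is correct and is precisely the one the paper has in mind: the paper states only that the lemma ``can be proved using the three-line theorem'' and gives no further details, so your outline via the analytic family $T_z f = J^{zs}(e^{(1-z)\beta(x+y)}f)$ together with Stein's (Hirschman's) version of the three-lines theorem to absorb the polynomial growth $(1+|t|\beta)^s$ on the line $\Re z = 1$ is exactly what is intended. One small remark: the $e^{\varepsilon(z-\theta)^2}$ damping alone does not suffice (the resulting constant blows up as $\varepsilon\to 0$); it is the Poisson-integral form of Hirschman's lemma, which you also cite, that gives the finite constant $c=c(s,\beta,\theta)$, since $\int \omega_1(\theta,t)\log\big((1+|t|\beta)^s\big)\,dt<\infty$.
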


In order to prove the exponential decay in Theorem~\ref{persistence_properties} we proceed in two steps, firstly we prove that $u$ is a bounded function from $[0,1]$ with values in $L^2(e^{2\beta(x+y)} dx\, dy)$ then, using the interpolation result Lemma~\ref{interpolation_persistence}, we obtain the boundedness of $u(t)$ in the space $H^3(e^{2\beta(x+y)} dx\, dy)$. The conclusion then follows from the symmetry properties of the equation.

The following lemma shows the boundedness of $u(t)$ in the space $L^2(e^{2\beta(x+y)} dx\, dy).$ The proof of this result follows mainly a strategy used in~\cite{B_I_M(JDE)} that came to light in the seminal paper by Kato~\cite{Kato} treating the well-posedness of the Cauchy problem for the KdV equation.

\begin{lemma}\label{partial_boundedness}
	Let $u\in C([0,1]; H^4(\R^2)) \cap C^1([0,1]; L^2(\R^2))$ be a solution of the equation~\eqref{Z-K_s} such that for all $\beta>0,$ $u(0)\in L^2(e^{2\beta(x+y)}dx\,dy).$ Then $u$ is a bounded function from $[0,1]$ with values in $L^2(e^{2\beta(x+y)} dx\,dy)$ for all $\beta>0.$ 
	\begin{proof}
		Since $e^{2\beta(x+y)}$ is a highly unbounded weight function, it is difficult to prove the result directly. Therefore we first approximate $e^{2\beta(x+y)}$ by a bounded weight function $\phi_n(x,y)$ which tends to $e^{2\beta(x+y)}$ monotonically as $n$ goes to infinity. Let $\varphi \in C^\infty(\R)$ be a decreasing function with $\varphi(x)=1$ if $x<1$ and $\varphi(x)=0$ if $x>10$ and let $\theta_n(x):= \int_0^x \varphi\big(\frac{x'}{n}\big)\, dx'.$ For $n\in \N$ we define
		\begin{equation*}
			\phi_n(x+y):=e^{2\beta \theta_n(x+y)}.
		\end{equation*}
 It can be seen that for every $n,$ $\phi_n(x+y)=e^{2\beta (x+y)}$ if $x+y\leq n$ and $\phi_n(x+y)\equiv d_n \leq e^{20\beta n}$ if $x+y>10n.$ Moreover $\phi_n\leq \phi_{n+1}$ and 
\begin{equation*}
	\abs{\partial_x^j \phi_n(x,y)}\leq C_{j,\beta} \phi_n(x,y),\qquad \abs{\partial_y^j \phi_n(x,y)}\leq C_{j,\beta} \phi_n(x,y) \qquad \forall\, j\in \N, \forall(x,y)\in \R^2.
\end{equation*}
Multiplying the equation~\eqref{Z-K_s} by $u\phi_n$ and integrating the resulting identity, we obtain
\begin{equation*}
	\int_{\R^2} \partial_t u\, u \phi_n + \int_{\R^2} \partial_x^3 u\, u \phi_n + \int_{\R^2} \partial_y^3 u\, u \phi_n + 4^{-1/3}\int_{\R^2} u^2 \partial_x u\, \phi_n + 4^{-1/3}\int_{\R^2} u^2 \partial_y u\, \phi_n =0.
\end{equation*}
Integrating by parts one has
\begin{equation*}
	\begin{split}
		\frac{1}{2} \frac{d}{dt} \int_{\R^2} u^2\, \phi_n &-\frac{1}{2} \int_{\R^2} u^2\, \partial_x^3 \phi_n +\frac{3}{2} \int_{\R^2} (\partial_x u)^2 \partial_x \phi_n -\frac{4^{-1/3}}{3}\int_{\R^2} u^3 \partial_x \phi_n\\
		&-\frac{1}{2} \int_{\R^2} u^2\, \partial_y^3 \phi_n +\frac{3}{2} \int_{\R^2} (\partial_y u)^2 \partial_y \phi_n -\frac{4^{-1/3}}{3}\int_{\R^2} u^3 \partial_y \phi_n=0,
	\end{split}
\end{equation*}
discarding positive terms this gives
\begin{equation*}
	\frac{1}{2} \frac{d}{dt} \int_{\R^2} u^2\, \phi_n\leq \frac{1}{2} \int_{\R^2} u^2\, \partial_x^3 \phi_n + \frac{1}{2} \int_{\R^2} u^2\, \partial_y^3 \phi_n + \frac{4^{-1/3}}{3}\int_{\R^2} u^3 \partial_x \phi_n + \frac{4^{-1/3}}{3}\int_{\R^2} u^3 \partial_y \phi_n.
\end{equation*}
Using the properties for the derivatives of $\phi_n$ and Sobolev embeddings one gets
\begin{equation*}
	\begin{split}
	\frac{1}{2} \frac{d}{dt} \int_{\R^2} u^2\, \phi_n &\leq C_{3,\beta} \int_{\R^2} u^2\, \phi_n + 2\frac{4^{-1/3}}{3} \norm{u(t)}_{L^\infty} C_{1,\beta} \int_{\R^2} u^2\, \phi_n\\
	&\leq \Big(C_{3,\beta} + C\norm{u}_{C([0,1]; H^2(\R^2))} \Big) \int_{\R^2} u^2\, \phi_n\\
	&=C_{\beta, u}\int_{\R^2} u^2\, \phi_n.
	\end{split}
\end{equation*} 
Applying the Gronwall lemma we obtain
\begin{equation*}
	\int_{\R^2} u^2(t) \phi_n\leq e^{C_{\beta, u} t}\int_{\R^2} u^2(0) \phi_n \leq e^{C_{\beta, u}}\int_{\R^2} u^2(0) \phi_n,\qquad \forall\, t\in [0,1]. 
\end{equation*}

Using the Monotone Convergence Theorem, letting $n$ go to infinity, we can conclude that	
\begin{equation*}
	\int_{\R^2} u^2(t) e^{2\beta(x+y)}\,dx\, dy\leq c \int_{\R^2} u^2(0) e^{2\beta(x+y)}\, dx\, dy,\qquad \forall\, t\in [0,1].
\end{equation*}
This proves that $u(t)$ is a bounded function from $[0,1]$ with values in $L^2(e^{2\beta(x+y)} dx\,dy)$ for all $\beta>0.$ 
\end{proof}
\end{lemma}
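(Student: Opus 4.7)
The plan is to run a weighted $L^2$ energy estimate against the symmetric ZK equation~\eqref{Z-K_s}, but with the unbounded weight $e^{2\beta(x+y)}$ replaced by a sequence of bounded truncations $\phi_n$ increasing monotonically to $e^{2\beta(x+y)}$. All integrals will then be finite, the key point being that every constant appearing in the resulting differential inequality has to be independent of $n$.

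First I would construct $\phi_n(x,y) := e^{2\beta \theta_n(x+y)}$ where $\theta_n(z) := \int_0^z \varphi(z'/n)\,dz'$ and $\varphi \in C^\infty(\R)$ is decreasing with $\varphi \equiv 1$ on $(-\infty,1]$ and $\varphi \equiv 0$ on $[10,\infty)$. By construction $\phi_n$ is bounded on $\R^2$ (with $\phi_n(x,y) \leq e^{20\beta n}$), one has $\phi_n \uparrow e^{2\beta(x+y)}$ pointwise, and most importantly $|\partial_x^j \phi_n|, |\partial_y^j \phi_n| \leq C_{j,\beta}\,\phi_n$ for every $j$ with constants independent of $n$, because derivatives of $\phi_n$ produce factors of $\varphi^{(k)}(\cdot/n)$ and powers of $2\beta\varphi(\cdot/n)$, all uniformly bounded.

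Next, I would multiply~\eqref{Z-K_s} by $u\phi_n$ and integrate over $\R^2$. Because $\phi_n$ is bounded and $u \in C([0,1];H^4) \cap C^1([0,1];L^2)$, all integrations by parts are legitimate with no boundary contributions. The time derivative gives $\tfrac12 \tfrac{d}{dt}\int u^2\phi_n$; the dispersive terms $\int \partial_x^3 u\, u\phi_n$ and $\int \partial_y^3 u\, u\phi_n$ yield, after two integrations by parts, the combination $-\tfrac12\int u^2\partial_x^3\phi_n + \tfrac32 \int (\partial_x u)^2 \partial_x\phi_n$ (and analogously in $y$), of which the gradient terms are non-negative and can be discarded; the nonlinear piece $4^{-1/3}\int u^2(\partial_x + \partial_y)u\,\phi_n$ integrates to $-\tfrac{4^{-1/3}}{3}\int u^3(\partial_x + \partial_y)\phi_n$. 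Using the uniform bound $|\partial_x^j \phi_n| + |\partial_y^j\phi_n| \leq C_{j,\beta}\phi_n$ and the Sobolev embedding $H^2(\R^2) \hookrightarrow L^\infty(\R^2)$ to bound $\|u(t)\|_{L^\infty} \leq C\|u\|_{C([0,1];H^2)}$, we arrive at
\begin{equation*}
\frac{d}{dt}\int_{\R^2} u^2\,\phi_n \,\leq\, C_{\beta,u}\int_{\R^2}u^2\,\phi_n,
\end{equation*}
with $C_{\beta,u}$ independent of $n$. Grönwall then gives $\int u^2(t)\phi_n \leq e^{C_{\beta,u}}\int u^2(0)\phi_n$, and since $\phi_n \leq e^{2\beta(x+y)}$ the right-hand side is dominated by $e^{C_{\beta,u}}\|u(0)\|^2_{L^2(e^{2\beta(x+y)}dx\,dy)}$, uniformly in $n$ and $t\in[0,1]$. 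A monotone convergence argument as $n\to\infty$ yields the claim.

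The main obstacle is the design of $\phi_n$ itself: the whole argument collapses if the truncation introduces $n$-dependent blow-up in $\partial^k\phi_n/\phi_n$, since derivatives of the weight up to order three appear. The choice $\phi_n = e^{2\beta\theta_n}$ with $\theta_n' = \varphi(\cdot/n) \in [0,1]$ trades the genuine exponential growth on $\{x+y > 10n\}$ for a bounded plateau, but preserves $\partial\phi_n = 2\beta\varphi(\cdot/n)\phi_n$ with bounded multiplier, so all constants remain $n$-independent. A secondary subtlety is that the cubic nonlinearity could a priori require $L^\infty$-in-weight control of $u$, but the observation that $u^3 \partial\phi_n$ can be estimated by $\|u\|_{L^\infty}\cdot u^2\phi_n$ reduces the bound to the already known $H^2$-norm of $u$, closing the estimate.
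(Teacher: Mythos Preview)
Your proposal is correct and follows essentially the same argument as the paper: the same truncated weight $\phi_n=e^{2\beta\theta_n(x+y)}$ with $\theta_n'=\varphi(\cdot/n)$, the same weighted energy identity after integration by parts, the same use of $\partial_x\phi_n,\partial_y\phi_n\ge 0$ to discard the gradient terms, the Sobolev embedding $H^2(\R^2)\hookrightarrow L^\infty(\R^2)$ to handle the cubic term, Gronwall, and monotone convergence. The emphasis you place on the $n$-independence of the constants $C_{j,\beta}$ in $|\partial^j\phi_n|\le C_{j,\beta}\phi_n$ is exactly the crucial point the paper relies on.
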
  

\begin{proof}[Proof of Theorem~\ref{persistence_properties}]
\hfill

 \begin{enumerate}[(i)]
	\item We want to prove that, assuming $u(0)\in L^2(e^{2\beta(x+y)}dx\, dy)$ we have that $t\mapsto u(t)$ is bounded from $[0,1]$ with values in $H^3(e^{2\beta(x+y)} dx\, dy).$ In Lemma~\ref{partial_boundedness} we have already proved that if $u(0)\in L^2(e^{2\beta(x+y)}dx\, dy),$ then $u$ is a bounded function from $[0,1]$ with values in $L^2(e^{2\beta(x+y)}dx\, dy).$  Moreover since we are assuming $u\in C([0,1];H^4(\R^2))$ we can use the interpolation result Lemma~\ref{interpolation_persistence} with $s=4$ and $\theta=\frac{3}{2}$ to obtain
	\begin{equation*}
		\norm{J^3(e^{\frac{\beta}{4}(x+y)} u(t))}_{L^2(\R^2)}\leq c \norm{J^4 u(t)}_{L^2(\R^2)}^{\theta} \norm{e^{\beta(x+y)} u(t)}_{L^2(\R^2)}^{1-\theta}.
	\end{equation*}
	Since we are assuming the previous to hold for all $\beta>0,$ we can re-define $\beta$ in such a way to be able to conclude that $t\mapsto u(t)$ is bounded from $[0,1]$ with values in $H^3(e^{2\beta(x+y)}dx\, dy).$ 
	\item This property follows immediately from $(i)$ taking into account the symmetry properties of equation~\eqref{Z-K_s}. Indeed, it can be seen that the function defined as $\widetilde{u}(x,y,t):=u(-x,-y,1-t)$ is still a solution of~\eqref{Z-K_s}. Moreover, since we are assuming $u(1)\in L^2(e^{-2\beta(x+y)}dx\,dy)$ the function $\widetilde{u}$ satisfies the hypothesis of $(i),$ therefore $\widetilde{u}$ is bounded from $[0,1]$ with values in $H^3(e^{2\beta(x+y)} dx\, dy)$ for all $\beta>0,$ or, what is equivalent, $u$ is bounded from $[0,1]$ with values in $H^3(e^{-2\beta(x+y)}dx\,dy),$ which is the proof of $(ii).$ 
 \end{enumerate}
\end{proof}
	

\end{document}